\documentclass[12pt]{amsart}
\usepackage[margin=1in]{geometry}
\usepackage{amsmath,amssymb,mathtools}
\usepackage[T1]{fontenc}
\usepackage{lmodern}
\usepackage{xcolor}
\usepackage{microtype}
\usepackage{booktabs}
\usepackage{tikz}
\usetikzlibrary{calc,decorations.pathreplacing}
\usepackage[colorlinks,
linkcolor=blue!70!black,
citecolor=green!50!black,
urlcolor=blue!60!black
]{hyperref}
\hypersetup{pdftitle={The exact Euclidean window of Z[sqrt(14)]},
  pdfauthor={David A. Cardon}}

\theoremstyle{plain}
\newtheorem{theorem}{Theorem}[section]
\newtheorem{proposition}[theorem]{Proposition}
\newtheorem{corollary}[theorem]{Corollary}
\newtheorem{lemma}[theorem]{Lemma}
\theoremstyle{definition}

\newtheorem{question}[theorem]{Question}
\theoremstyle{remark}
\newtheorem{remark}[theorem]{Remark}

\numberwithin{equation}{section}

\newcommand{\Z}{\mathbb{Z}}
\newcommand{\Q}{\mathbb{Q}}
\newcommand{\R}{\mathbb{R}}
\newcommand{\ve}{v_{\pi}}
\title{The exact Euclidean window of $\mathbb{Z}[\sqrt{14}]$}
\author{David A. Cardon}
\address{Department of Mathematics, Brigham Young University, Provo, UT 84602}
\email{cardon@mathematics.byu.edu}
\date{October 16, 2026}
\keywords{Euclidean rings, Euclidean windows, weighted norms, real quadratic
fields, inhomogeneous minima}
\subjclass[2020]{Primary 11R04. Secondary 13F07, 11R11, 11H50, 11Y40}

\begin{document}

\begin{abstract}
  Let $R = \mathbb{Z}[\sqrt{14}]$, let $\pi = 4 + \sqrt{14}$, and let
  $\mathfrak{p} = (2,\sqrt{14}) = \pi R$ be the prime ideal above $2$.
  Cavallar and Lemmermeyer studied the multiplicative weighted norms
  $\phi_{t}(x) = |N(x)|(t/2)^{v_{\pi}(x)}$ and asked for which weights $t$
  they are Euclidean functions on $R$. We show that the set of these
  weights, the Euclidean window of $\mathfrak{p}$, is $(\sqrt{5},T]$,
  where $T$ is the algebraic number
  \[
    T = \sqrt{\frac{8}{104051\sqrt{2} - 55617\sqrt{7}}}
    = 2.2468286690\ldots = \sqrt{5} + 0.0107606915\ldots.
  \]
  Thus the window is nonempty but not open, which answers three questions
  of Cavallar and Lemmermeyer. The proof codes the orbits of the classes
  under the fundamental unit by sequences of lattice translations, and a
  finite calculation shows that a single orbit, which joins two periodic
  orbits of points representing the class of $(3 + \sqrt{14})/2$,
  determines the endpoint $T$. A family of periodic classes, whose orbits
  are coded by the labels of closed walks in the same graph, shows that
  the endpoint is exact. We also obtain the integer-valued completely
  multiplicative Euclidean function $2^{v_{\pi}(x)}|N(x)|^{6}$ on $R$.
\end{abstract}

\maketitle \tableofcontents

\section{Introduction} \label{sec:intro}
For a squarefree integer $m > 1$, the ring of integers of $\Q(\sqrt{m})$
is Euclidean with respect to the absolute value of the norm if and only if
$m$ is one of $2$, $3$, $5$, $6$, $7$, $11$, $13$, $17$, $19$, $21$, $29$,
$33$, $37$, $41$, $57$, $73$ (see Lemmermeyer~\cite{Lemmermeyer2004}). In
this case we say that the ring is \emph{norm-Euclidean}. The list was
completed in $1952$ by Barnes and
Swinnerton-Dyer~\cite[p.~318]{BarnesSwinnertonDyer1952a}, who showed that
the ring of integers of $\Q(\sqrt{97})$ is not norm-Euclidean. Since
$\Z[\sqrt{10}]$ is not a unique factorization domain and $\Z[\sqrt{14}]$
has class number $1$, the number $14$ is the smallest positive $m$ for
which this ring is a unique factorization domain but is not
norm-Euclidean.

Thus the question for $\Z[\sqrt{14}]$ is whether some function other than
the norm is a Euclidean function on it. Weinberger~\cite{Weinberger1973}
proved, assuming the generalized Riemann hypothesis, that the ring of
integers of a number field with infinitely many units is a Euclidean
domain if and only if it is a principal ideal domain. Since the unit group
of $\Z[\sqrt{14}]$ is infinite, Weinberger's theorem predicts that
$\Z[\sqrt{14}]$ is Euclidean, although the norm is not a Euclidean
function on it. For this reason $\Z[\sqrt{14}]$ has long been the standard
test case for Euclidean functions other than the norm. Bedocchi showed
that a certain function on this ring is a Euclidean function, provided
that an unproved hypothesis holds~\cite[Hypoth\`ese~4.1]{Bedocchi1985},
and Nagata~\cite{Nagata1988,Nagata1994} also studied division in this
ring. Cavallar and Lemmermeyer describe the question whether
$\Z[\sqrt{14}]$ is Euclidean with respect to a weighted norm as one of the
most widely studied questions about weighted
norms~\cite{CavallarLemmermeyer2000}. The author showed that the larger
ring $\Z[\sqrt{14},1/2]$ is Euclidean with respect to the function that
sends a nonzero element $x$ to the odd part of the rational number
$|N(x)|$~\cite{Cardon1997}. Harper writes that $14$ was the least $m$ for
which it was unknown whether $\Q(\sqrt{m})$ is
Euclidean~\cite{Harper2004}, and he proved unconditionally that
$\Z[\sqrt{14}]$ is Euclidean. However, his proof does not show that any
weighted norm is a Euclidean function on $\Z[\sqrt{14}]$. In a note added
in proof, Cavallar and Lemmermeyer report Harper's result and state that
the question whether $\Z[\sqrt{14}]$ is Euclidean with respect to a
weighted norm remains open~\cite{CavallarLemmermeyer2000}.
Theorem~\ref{thm:main} below answers it, together with two further
questions of Cavallar and Lemmermeyer about the same weighted norms.

Let $K = \Q(\sqrt{14})$, let $R = \Z[\sqrt{14}]$, let $N$ be the norm from
$K$ to $\Q$, and let $\pi = 4 + \sqrt{14}$. For $x \in K^{\times}$ let
$\ve(x)$ be the exponent of the prime ideal
$\mathfrak{p} = \pi R = (2,\sqrt{14})$ above $2$ in the factorization of
$xR$. For $t > 0$, let $\phi_{t}(0) = 0$ and
$\phi_{t}(x) = |N(x)|(t/2)^{\ve(x)}$ for $x \in K^{\times}$. Thus
$\phi_{t}$ is the multiplicative function that agrees with $|N|$ except
that it gives the prime element $\pi \in R$ the weight $t$ instead of its
norm $2$. These are the weighted norms of Lenstra studied by Cavallar and
Lemmermeyer~\cite{CavallarLemmermeyer2000}. We determine which of them are
Euclidean (see \S\ref{sec:classes}).

\begin{theorem} \label{thm:main}
  Let
  \[
    T = \sqrt{\frac{8}{104051\sqrt{2} - 55617\sqrt{7}}}
    = 2.2468286690\ldots = \sqrt{5} + 0.0107606915\ldots.
  \]
  For $t > 0$, the multiplicative function $\phi_{t}$ is a Euclidean
  function on $R$ if and only if $\sqrt{5} < t \leq T$. Furthermore, for
  every $z \in K \setminus R$ with $\ve(z) \geq 0$ there is a $q \in R$
  such that $w = z - q$ satisfies $0 \leq \ve(w) \leq 4$ and
  $\phi_{T}(w) < 1$.
\end{theorem}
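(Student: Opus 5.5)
The plan is to reduce the Euclidean property of $\phi_t$ to a covering problem on residue classes, and then treat the two halves of the window separately. Since $\phi_t$ is multiplicative, $\phi_t$ is Euclidean if and only if for every $z \in K\setminus R$ there is $q\in R$ with $\phi_t(z-q)<1$. Write $z=a/b$ and multiply $b$ by a suitable power of $\pi$. Since $\pi^2/2$ is a unit times... (more precisely $\pi\bar\pi=2$ and $\bar\pi=\pi\cdot$unit up to the ramified structure, so $\mathfrak p^2=(2)$), this reduces to $z$ with $\ve(z)\ge 0$. The relevant data is then the class of $z$ in $K/R$, restricted to denominators prime to $\mathfrak p$, together with the freedom to subtract multiples that shift $\ve$.

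For $w=z-q$ we have $\phi_t(w)=|N(w)|(t/2)^{\ve(w)}$. Taking $q$ with $\ve(z-q)=k$ means choosing $q$ in a fixed coset of $\pi^kR$. Because $t>2$ gives $t/2>1$, a high valuation costs a factor $(t/2)^k$ but buys access to the sublattice $\pi^kR$. Thus for each $k\le 4$ and each coset we must bound the inhomogeneous minimum $M_k(z)=\inf|N(z-q)|$ over $q$ in that coset, and we want $\min_k M_k(z)(t/2)^k<1$.

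The lower bound $t>\sqrt5$ comes from a single bad class. The class of $(3+\sqrt{14})/2$ attains the Euclidean minimum $\ge 1$ of $\mathbb Z[\sqrt{14}]$ at $\ve=1$; the norm gives $|N((3+\sqrt{14})/2-q)|\ge 5/4$ for all $q$, with equality attained on the periodic orbit. Moving to valuation $1$ gives $\phi_t=(5/4)(t/2)^{-1}\cdot$\ldots. Concretely, I would compute that the best choice yields the quantity $5/t^2$, which is $<1$ if and only if $t>\sqrt5$. Necessity then follows, and equality at $t=\sqrt5$ fails.

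For the upper bound, the natural route is the orbit-coding of the abstract. The fundamental unit $\varepsilon=15+4\sqrt{14}$ acts on $K/R$ and preserves both $|N|$ and $\ve$. I would embed $K\to\R^2$ by $x\mapsto(x,\bar x)$ and take a fundamental domain for $R$. Points with $|N(z-q)|\ge c$ for every $q$ in the relevant cosets are confined, after applying powers of $\varepsilon$, to neighborhoods of finitely many hyperbolas. A finite computation builds a graph whose vertices are small boxes and whose edges are lattice translations; the bad points are exactly those whose $\varepsilon$-orbits follow infinite walks. For $t$ slightly above $\sqrt5$, the only surviving walks are the two periodic orbits through $(3+\sqrt{14})/2$ and a heteroclinic orbit joining them. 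The supremum over that heteroclinic orbit of the required weight is a number in $\Q(\sqrt2,\sqrt7)$, computed as a geometric series along the eventually periodic tails, and this number is $T$. Hence every $z$ admits $q$ with $0\le\ve(z-q)\le4$ and $\phi_T(z-q)<1$; strictness holds because the heteroclinic point itself is irrational and does not lie in $K$. Sufficiency for $t\le T$ follows, since for $t\le T$ each $(t/2)^k$ with $k\ge0$ is at most $(T/2)^k$.

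Showing $\phi_t$ fails for $t>T$ uses periodic classes: closed walks in the same graph that approximate the heteroclinic orbit, that is, that go around one cycle $n$ times, cross, and go around the other. Each such walk defines a point of $K$ fixed by $\varepsilon^m$, so it lies in $K$ with $\ve\ge0$. Its required weight tends to $T$ from above as $n\to\infty$, so for any $t>T$ some class witnesses failure. The main obstacle is the finite calculation showing that no other walk survives. This requires rigorous interval arithmetic on the subdivision, and I would implement it as a certified computation with exact rational boxes.
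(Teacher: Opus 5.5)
Your outline has the right architecture: coding orbits by labels, a graph of squares, a heteroclinic orbit between the two period-$2$ orbits of the class of $p$, and periodic classes that approximate it. But there are genuine gaps on both sides of the window.

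\textbf{Division at $T$.} Your description of the finite calculation cannot be correct. Covered squares certify reducing remainders only up to the trial weight, so that weight must be at least $T$; the paper uses $9/4$. At $9/4$ the obstructing classes $z_m$ exist, their points lie in $\mathcal{X}$, and their orbits follow closed walks of the final graph through the critical squares (Remark~\ref{rem:familywalk}). So closed walks other than those of $\mathcal{F}$ necessarily survive, and a heteroclinic orbit is not a closed walk at all. Calling $T$ ``the supremum over the heteroclinic orbit'' therefore says nothing about the periodic orbits that actually survive.

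The missing ingredient is a domination argument valid for every such orbit, in two steps:
\begin{enumerate}
\item[(i)] Every periodic orbit of $\mathcal{X}$ outside $\mathcal{F}$ passes, up to negation, through one of four critical squares, and there its labels are forced for three steps before and four after (Proposition~\ref{prop:finite}).
\item[(ii)] Along the upper component the period-$2$ orbits are extremal: $(\rho_k)_u\ge(5+5\sqrt{14})/2$ and $(\rho_k)_s\le(11-5\sqrt{14})/2$. This holds because the affine maps induced by $\varepsilon^{2}$ expand away from their fixed points (Lemma~\ref{lem:minimum}).
\end{enumerate}
Only with both steps do the series of Lemma~\ref{lem:coding} give the rectangle $-U\le\rho_u-q<0<\rho_s-\bar q\le V$ for the single quotient $q=4+4\sqrt{14}$ of valuation $4$. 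This yields $|N(\rho-q)|\le\nu$, and your irrationality remark then correctly makes the inequality strict.

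\textbf{Preliminary reduction.} Your reduction to $\ve(z)\ge0$ by multiplying $b$ by a power of $\pi$ is invalid, since it does not carry classes modulo $R$ to classes modulo $R$. A class with $\ve(z)<0$ has all its normalized remainders of valuation $\ve(z)$. So you need the separate fact that every class other than $p+R$ has a remainder of absolute norm less than $1$. Also, $\ve(p-q)=-2$, not $1$; that is what gives $5/t^{2}$.

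\textbf{Obstruction above $T$.} Closing the walk is where the real difficulty lies, and it defeats your family as stated. To return from the cycle of $(5/2,5/2)$ to that of $(11/2,5/2)$, the orbit passes near the homoclinic point $(\bar p,p+\bar\pi)$. There the class has the normalized remainder $r_m=y_{m,1}-4\sqrt{14}$, with $|N(r_m)|\to(9-2\sqrt{14})/4\approx0.38$. At weights near $T$ this remainder is reducing unless $\ve(r_m)\ge10$. Since $\ve(r_m)=2\operatorname{ord}_2(A_m-7B_m-n)-6$, this happens only for $m\equiv33\pmod{64}$ (Lemma~\ref{lem:zm}). For example, $\ve(r_m)=0$ whenever $m\equiv3\pmod 4$, so for large such $m$ the class is divisible at every weight. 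Hence the threshold of your classes does not tend to $T$ unless the period is chosen arithmetically. Being fixed by a power of $\varepsilon$ does not by itself give $\ve\ge0$ either: $p$ is fixed by $\varepsilon$ modulo $R$ and has valuation $-2$.

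Even with the right periods, you must exclude every other normalized remainder, at all $2m$ positions and for infinitely many quotients. The paper does this in four steps:
\begin{enumerate}
\item[(a)] balancing with powers of $\varepsilon$;
\item[(b)] a gap lemma near the class of $p$, which uses $5/4>1$ to discard all but eight positions;
\item[(c)] the reflection $k\mapsto 3-k$, which pairs those positions, together with finite boxes of quotients at the remaining four;
\item[(d)] the exact check $\lambda^{4}\ge\nu^{h}$ of the limiting norms.
\end{enumerate}
Your proposal contains no argument of this kind. Since a closed walk in the graph need not be followed by any obstructing point, the graph alone cannot supply it.
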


The set $w(\mathfrak{p})$ of all $t > 0$ for which $\phi_{t}$ is a
Euclidean function on $R$ is called the \emph{Euclidean window} of
$\mathfrak{p}$. Cavallar and Lemmermeyer~\cite{CavallarLemmermeyer2000}
showed that the Euclidean window of any prime ideal in a ring of integers
is either empty or an interval contained in $(1,\infty)$. In the specific
ring $R = \Z[\sqrt{14}]$ and for the prime ideal $\mathfrak{p} = \pi R$,
they proved that $w(\mathfrak{p}) \subseteq (\sqrt{5},\sqrt{7})$. They
then asked the following three questions.
\begin{enumerate}
  \item[(1)] Is $w(\mathfrak{p})$ nonempty? That is, is $\phi_{t}$ a
    Euclidean function on $R$ for some $t > 0$?
  \item[(2)] Is $w(\mathfrak{p}) = (\sqrt{5},\sqrt{7})$?
  \item[(3)] Is the Euclidean window of a prime ideal always open when the
    unit group of the ring of integers is infinite?
\end{enumerate}
Theorem~\ref{thm:main} says that $w(\mathfrak{p}) = (\sqrt{5},T]$, and so
it answers all three questions. The window is nonempty, so the answer to
question (1) is yes. Since $T < \sqrt{7}$, the window is a proper subset
of $(\sqrt{5},\sqrt{7})$, so the answer to question (2) is no. The window
contains its upper endpoint $T$, so it is not open, although the unit
group of $R$ is infinite. Thus the answer to question (3) is also no.
Since $\sqrt{5} < T < \sqrt{6}$, the theorem also shows that
$\phi_{\sqrt{6}}$ is not a Euclidean function.
Nagata~\cite{Nagata1988,Nagata1994} conjectured that it is, and he gave
computations that support the conjecture (see
Lemmermeyer~\cite{Lemmermeyer2004}).

The ring of integers of $\Q(\sqrt{69})$ is not norm-Euclidean either.
Clark~\cite{Clark1994} showed that it is Euclidean with respect to a
weighted norm. This was one of the first examples of a ring of integers
that is Euclidean with respect to a weighted norm but is not
norm-Euclidean (see Cavallar and
Lemmermeyer~\cite{CavallarLemmermeyer2000}). Cavallar and
Lemmermeyer~\cite{CavallarLemmermeyer2000} proved that the Euclidean
window of the prime ideal $(23,\sqrt{69})$ above $23$ is the open interval
$(25,\infty)$. Lemmermeyer~\cite{Lemmermeyer2004} reports that Clark's
method did not show that the Euclidean window of the prime above $2$ in
$\Z[\sqrt{14}]$ is nonempty, and that a modification of it due to
Schroeppel and Niklasch does not seem to work either.
Theorem~\ref{thm:main} shows that $\Z[\sqrt{14}]$ is also Euclidean with
respect to a weighted norm. Its window is bounded, unlike the window of
the prime above $23$ in $\Q(\sqrt{69})$, and it contains its upper
endpoint.

The proof follows the method of Cavallar and
Lemmermeyer~\cite{CavallarLemmermeyer2000} for $\Q(\sqrt{69})$, which goes
back to Barnes and Swinnerton-Dyer~\cite[Sections~4
and~8]{BarnesSwinnertonDyer1952a}, and it uses the symbolic coding of
orbits by which Ramsey studies inhomogeneous
minima~\cite[Sections~4--6]{Ramsey2025}. We work on a torus on which
multiplication by the fundamental unit $\varepsilon$ acts, and we describe
each orbit by its \emph{labels}, the lattice translations that return the
successive points of the orbit to a fundamental domain. The labels
determine the orbit through a convergent series in powers of
$\varepsilon^{-1}$ (Lemma~\ref{lem:coding}). A finite calculation on a
graph of small squares shows that every orbit which could obstruct
division at a weight near $T$ must pass through four specific squares, and
that its labels before and after this passage are forced for several steps
and are then bounded by the labels of two periodic orbits. The series then
bounds the two coordinates of the orbit at the passage by the coordinates
of one explicit orbit, which we call the critical orbit. The critical
orbit is an infinite orbit that joins the two periodic orbits, and its
coordinates are given by two explicit series. Their values determine $T$.
The bound is strict for elements of $K$ because their norms are rational,
and this proves division at the weight $T$. Finally, we construct periodic
classes whose orbits follow the two periodic orbits and the critical orbit
and return along a second orbit of the same graph, and we exclude all
their other possible reducing normalized remainders. This shows that the
upper endpoint is exact.

The expression for $T$ displays the integers $104051$ and $55617$ and the
square roots of $2$ and $7$, the two primes that ramify in $K$. We
emphasize that $T$ is not guessed in advance and then verified. The finite
calculation of \S\ref{sec:weighted} produces the labels of the critical
orbit, and the series of Lemma~\ref{lem:coding} produces $T$, and the
integers $104051$ and $55617$, from these labels. By
Proposition~\ref{prop:valuationfour}, the number $4$ in the last assertion
of Theorem~\ref{thm:main} cannot be replaced by $3$.

Taking the sixth power of a suitable weighted norm gives a simple
integer-valued Euclidean function.

\begin{corollary} \label{cor:integer}
  The function $\Phi \colon R \to \Z_{\geq 0}$ defined by
  \[
    \Phi(0) = 0,\qquad \Phi(x) = 2^{\ve(x)}|N(x)|^{6}\quad(x \neq 0)
  \]
  is a completely multiplicative Euclidean function on $R$.
\end{corollary}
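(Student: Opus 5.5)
Note that $\Phi = \phi_{t}^{6}$ with $t = 2^{7/6}$. Indeed, for $x \neq 0$,
\[
  \phi_{t}(x)^{6} = |N(x)|^{6}\,(t/2)^{6\ve(x)} = |N(x)|^{6}\,\bigl(2^{1/6}\bigr)^{6\ve(x)} = 2^{\ve(x)}|N(x)|^{6}.
\]
Raising a Euclidean function to a positive power preserves the Euclidean property, since $\phi(r) < \phi(b)$ if and only if $\phi(r)^{6} < \phi(b)^{6}$. So by Theorem~\ref{thm:main} it suffices to check that $\sqrt{5} < 2^{7/6} \leq T$.

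The numerical check is immediate. We have $2^{7/6} = 2 \cdot 2^{1/6} = 2.2449241\ldots$. Comparing sixth powers on the left, $(2^{7/6})^{6} = 128 > 125 = (\sqrt{5})^{6}$, so $\sqrt{5} < 2^{7/6}$. On the right, $T = 2.2468286\ldots$, so $2^{7/6} < T$. To make the right-hand comparison rigorous rather than decimal, I would compare $T^{2}$ with $2^{7/3}$. This amounts to showing $8 / (104051\sqrt{2} - 55617\sqrt{7}) > 2^{7/3}$, which can be verified by interval arithmetic with a few digits of $\sqrt{2}$, $\sqrt{7}$ and $2^{1/3}$. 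The margin is about $0.0019$, so this is easy.

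Next I would confirm the remaining properties. $\Phi$ takes values in $\Z_{\geq 0}$ because $\ve(x) \geq 0$ for $x \in R$. It is completely multiplicative because both $\ve$ and $|N|$ are multiplicative. Finally, $\Phi(x) = 0$ only for $x = 0$. The Euclidean property then transfers directly: given $a, b \in R$ with $b \neq 0$, Theorem~\ref{thm:main} provides $q$ with $\phi_{t}(a - qb) < \phi_{t}(b)$, and this gives $\Phi(a - qb) < \Phi(b)$. The only potential obstacle is the precise inequality $2^{7/6} \leq T$, which the decimal expansion already settles.
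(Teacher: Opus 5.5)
Your proposal is correct and is essentially the paper's proof: write $\Phi = \phi_{t_0}^{6}$ with $t_0 = 2^{7/6}$, check $\sqrt{5} < t_0 < T$ (using $128 > 125$ on the left), and transfer conditions (i)--(iii) and complete multiplicativity through the sixth power. The only difference is how you certify $t_0 < T$. The paper reduces it, via $T^{4} = 16/\nu$, to the exact comparison $\nu^{3} < 1/4$ in $K$. Your interval-arithmetic route also works, but $104051\sqrt{2} - 55617\sqrt{7} \approx 1.58$ involves heavy cancellation, so it needs roughly eight or nine correct decimals of $\sqrt{2}$ and $\sqrt{7}$, not ``a few digits''.
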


Here $\Phi(x) = \phi_{t_{0}}(x)^{6}$ with $t_{0} = 2^{7/6}$, and
$\sqrt{5} < t_{0} < T$. We give the details in \S\ref{sec:division}.

\subsection{The plan of the proof} \label{sec:strategy}
For $z \in K$ we call the additive coset
$z + R = \{z + r \colon r \in R\}$ of $R$ in $K$ a \emph{class}. To
explain the term \emph{normalized remainder}, suppose that $z = a/b \in K$
with $a,b \in R$ and $b \neq 0$. The classical division problem is to find
$q,r \in R$ such that $a = bq + r$, where $q$ is the quotient and $r$ is
the remainder. Writing $r$ as $bw$, where $w \in K$, gives $a = bq + bw$.
The normalized remainder is $w = z - q$. As $q$ varies over $R$, the
normalized remainders are exactly the elements of $z + R$. Since
$\phi_{t}$ is multiplicative, finding a value of $t$ for which $\phi_{t}$
is a Euclidean function on $R$ amounts to finding some $w$ such that
$\phi_{t}(bw) = \phi_{t}(w)\phi_{t}(b) < \phi_{t}(b)$, which becomes
$\phi_{t}(w) < 1$. Thus $\phi_{t}$ is a Euclidean function if and only if
every class $z + R$ contains a normalized remainder $w$ with
$\phi_{t}(w) < 1$.

The lower endpoint comes from the class of $p = (3 + \sqrt{14})/2$. Every
normalized remainder of $p$ has absolute norm at least $5/4$ and
$\pi$-adic valuation $-2$, so its weighted norm is at least $5/t^{2}$,
with equality at $p$ itself. Raising the weight $t$ helps the classes of
negative valuation, such as $p + R$, but it hurts the classes of
nonnegative valuation, all of whose normalized remainders have nonnegative
valuation. In \S\ref{sec:prelim} we show that, for $t > \sqrt{5}$, the
function $\phi_{t}$ is a Euclidean function if and only if every
nonintegral class of nonnegative valuation contains a normalized remainder
$w$ with $\phi_{t}(w) < 1$, and that such a normalized remainder also
satisfies $\phi_{x}(w) < 1$ for every $x$ with $0 < x \leq t$. In
particular, if $\phi_{t}$ is a Euclidean function for some $t > \sqrt{5}$,
then the whole interval $(\sqrt{5},t]$ lies in the window. Therefore the
first assertion of Theorem~\ref{thm:main} is equivalent to the following
two statements.
\begin{enumerate}
  \item[(A)] For every $t > T$, some nonintegral class of nonnegative
    valuation contains no normalized remainder $w$ with $\phi_{t}(w) < 1$.
  \item[(B)] Every nonintegral class of nonnegative valuation contains a
    normalized remainder $w$ with $\phi_{T}(w) < 1$.
\end{enumerate}
Statement (A) shows that no weight larger than $T$ is in the window, and
statement (B) shows that $T$ is in the window. We prove (B), together with
the last assertion of Theorem~\ref{thm:main}, in \S\ref{sec:endpoint}, and
we prove (A) in \S\ref{sec:family}.

\subsection{Why a finite calculation suffices} \label{sec:finite}
There are infinitely many classes, and their denominators are unbounded,
so statement (B) cannot be verified one class at a time. Two observations
reduce it to a finite calculation.

First, multiplication by the fundamental unit
$\varepsilon = 15 + 4\sqrt{14}$ permutes the classes and preserves norms
and valuations. A class therefore has a normalized remainder of weighted
norm less than $1$ if and only if every class in its orbit has one. Also,
every class is periodic under this permutation, because multiplication by
$\varepsilon$ permutes the finite group $d^{-1}R/R$ for each positive
integer $d$.

Second, we can record these orbits by a finite partition, while retaining
control of the valuations. For a class of nonnegative valuation we choose
a representative $\xi$ with $\ve(\xi) \geq 5$. Then
$\ve(\xi - q) = \ve(q)$ whenever $\ve(q) \leq 4$, and the condition
$\ve(\xi) \geq 5$ is preserved by multiplication by $\varepsilon$ and by
translation by an element of $\pi^{5}R$ (Lemma~\ref{lem:periodic}). We
divide a fundamental domain of this lattice into small squares. We delete
a square when a single quotient of valuation at most $4$ gives a reducing
normalized remainder for every element of valuation at least $5$ whose
coefficient point lies in it, at a fixed rational trial weight. We record
the possible transitions between the remaining squares in a finite
directed graph, and we label each edge by the lattice translation that the
transition requires. The itinerary of a periodic orbit is then a closed
walk in the graph, much as Ramsey codes a periodic orbit by a periodic
sequence of members of his partition~\cite[Section~4]{Ramsey2025}. Our
graph may have additional walks, but every orbit that concerns us is
recorded by one of its closed walks, and the labels of the walk determine
the orbit. Thus one finite calculation applies to all the classes,
whatever their denominators.

The calculation shows that every closed walk of the graph, except for the
walks that record the classes $R$ and $p + R$ themselves, passes through
four small squares or through their images under negation, and that the
labels of a walk are forced for three steps before and for four steps
after this passage. The series of Lemma~\ref{lem:coding} converts the
labels into bounds for the two coordinates of the orbit at the passage,
and the product of the two bounds is the number $\nu = 0.6278\ldots$
of~\eqref{eq:nu}, which determines $T$ by $(T/2)^{4} = 1/\nu$. The number
$\nu$ is irrational, whereas the absolute norm of an element of $K$ is
rational. Therefore the absolute norm of the normalized remainder is
strictly smaller than $\nu$, and this gives division at the endpoint $T$,
not only at every smaller weight. The same calculation shows that every
class other than $p + R$ has a normalized remainder of absolute norm less
than $1$.

To show that no larger weight belongs to the window, we construct periodic
classes whose orbits are coded by closed walks in the same graph: many
turns around one periodic orbit, the critical orbit, many turns around the
other periodic orbit, and a return along a second orbit of the graph. The
series of Lemma~\ref{lem:coding} gives these classes in closed form, and
the orbits approach the critical orbit. We must also exclude all other
reducing normalized remainders of these classes. Multiplying a normalized
remainder by a suitable power of $\varepsilon$ reduces that question to a
fixed finite list of quotients at four positions of the orbit, for the
whole family (\S\ref{sec:smallrem}).

All of the finite calculations are exact computations in $K$. They are
carried out by a computer program, written in Python, which runs in about
half a minute and uses no floating-point arithmetic. The program is not
part of this paper. It is deposited, with its expected output, as an
ancillary file of the arXiv submission of this paper. The statements whose
proofs use the computer are Propositions~\ref{prop:finite}
and~\ref{prop:limits}, together with a few inequalities between explicit
elements of $K$, each of which is introduced by the words ``exact
comparison in $K$''. Every other step is proved in the text.

\subsection{Organization of the paper} \label{sec:organization}
In \S\ref{sec:prelim} we recall the arithmetic of $R$, reduce the division
problem to classes, and prove that the window is determined by the classes
of nonnegative valuation. In \S\ref{sec:graph} we divide the fundamental
domain into small squares, we describe the covering test and the finite
graph, and we prove that the labels of a walk determine the orbit that
follows it. In \S\ref{sec:weighted} we carry out the finite calculation,
we obtain the critical orbit and the number $T$ from it, and we prove that
every class can be divided at the weight $T$ and that every class other
than $p + R$ has a normalized remainder of absolute norm less than $1$. In
\S\ref{sec:family} we construct classes that cannot be divided at any
given weight larger than $T$, which shows that no larger weight is in the
window. In \S\ref{sec:division} we complete the proofs of
Theorem~\ref{thm:main} and Corollary~\ref{cor:integer}, and we show that
the Euclidean minimum of $R$ with respect to $\phi_{T}$ is $1$. Finally,
in \S\ref{sec:questions} we suggest questions for further study.
\section{Preliminaries} \label{sec:prelim}
In this section we recall the facts about $R$ that will be needed later,
and we reduce the division problem to classes.
Proposition~\ref{prop:shape} then shows that the window is determined by
the classes of nonnegative valuation. The section ends with the
periodicity of these classes under multiplication by the fundamental unit
and the choice of representatives of valuation at least $5$ that will
allow us to control valuations throughout a square.

\subsection{The ring \texorpdfstring{$R$}{R}} \label{sec:ring}
We identify $K$ with a subfield of $\R$, so that $\sqrt{14}$ is the positive
square root and inequalities between elements of $K$ are inequalities
between real numbers. For $x = a + b\sqrt{14} \in K$ with $a,b \in \Q$, the
\emph{conjugate} of $x$ is $\bar{x} = a - b\sqrt{14}$, and
$N(x) = x\bar{x} = a^{2} - 14b^{2}$. The element
$\varepsilon = 15 + 4\sqrt{14}$ is the fundamental unit of $R$.
The element $\pi = 4 + \sqrt{14}$ has norm $2$, and its conjugate is
$\bar{\pi} = 4 - \sqrt{14}$. A direct calculation gives
\[
  \pi^{2} = 2\varepsilon,\qquad \pi = \varepsilon\bar{\pi},\qquad
  \pi\bar{\pi} = 2.
\]
The ideal $\mathfrak{p} = (2,\sqrt{14}) = \pi R$ is the unique prime ideal
of $R$ lying above $2$. The ring $R$ has class number $1$. Since
$\mathfrak{p} = \pi R$ is principal, the exponent $\ve(x)$ of
$\mathfrak{p}$ in the factorization of $xR$, for $0 \neq x \in R$, is the
largest integer $j \geq 0$ for which $\pi^{j}$ divides $x$ in $R$. We set
$\ve(0) = \infty$. For $r \in \Q^{\times}$ let $\operatorname{ord}_{2}(r)$
be the exponent of $2$ in $r$, and set
$\operatorname{ord}_{2}(0) = \infty$. We use the usual conventions that
$\infty$ is larger than every integer and that
$a + \infty = \infty + a = \infty$ and $2 \cdot \infty = \infty$ for
$a \in \Z \cup \{\infty\}$. The next lemma collects the properties of
$\ve$ that we will use.

\begin{lemma} \label{lem:valuation}
  \mbox{}
  \begin{enumerate}
    \item[(a)] For $0 \neq x = a + b\sqrt{14} \in K$ with $a,b \in \Q$,
      \begin{equation} \label{eq:valuation}
        \ve(x) = \operatorname{ord}_{2} N(x)
        = \min\{2\operatorname{ord}_{2}(a),\,1 + 2\operatorname{ord}_{2}(b)\}.
      \end{equation}
    \item[(b)] For $x,y \in K$ we have $\ve(xy) = \ve(x) + \ve(y)$ and
      $\ve(x + y) \geq \min\{\ve(x),\ve(y)\}$, with equality if
      $\ve(x) \neq \ve(y)$.
    \item[(c)] For every integer $j \geq 0$,
      \[
        \pi^{2j}R = 2^{j}R,\qquad
        \pi^{2j + 1}R = 2^{j + 1}\Z + 2^{j}\Z\sqrt{14}.
      \]
      In particular, $\pi R = 2\Z + \Z\sqrt{14}$ and
      $\pi^{5}R = 8\Z + 4\Z\sqrt{14}$.
    \item[(d)] An element $x = a + b\sqrt{14}$ of $K$ satisfies
      $\ve(x) \geq 0$ if and only if the rational numbers $a$ and $b$ have
      odd denominators.
  \end{enumerate}
\end{lemma}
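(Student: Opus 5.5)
The plan is to derive (b), (c) and (d) from (a), and to prove (a) from the ideal factorization $2R = \mathfrak{p}^{2}$, which follows from $\pi^{2} = 2\varepsilon$ with $\varepsilon$ a unit.

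\emph{The first equality in (a).} Since $N(\pi) = 2$, the ideal $\mathfrak{p} = \pi R$ has index $2$ and is the only prime above $2$. For any prime $\mathfrak{q} \neq \mathfrak{p}$, the number $|N(\mathfrak{q})|$ is a power of an odd prime. For $x \in K^{\times}$, factor $xR = \mathfrak{p}^{\ve(x)}\prod \mathfrak{q}^{e_{\mathfrak{q}}}$ with $\mathfrak{q} \neq \mathfrak{p}$. Taking absolute ideal norms gives $|N(x)| = 2^{\ve(x)}\cdot(\text{odd part})$, so $\operatorname{ord}_{2}N(x) = \ve(x)$.

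\emph{The second equality in (a).} Write $\alpha = \operatorname{ord}_{2}(a)$ and $\beta = \operatorname{ord}_{2}(b)$. Then $\operatorname{ord}_{2}(a^{2}) = 2\alpha$ is even and $\operatorname{ord}_{2}(14b^{2}) = 1 + 2\beta$ is odd. These two orders are distinct, so
\[
\operatorname{ord}_{2}(a^{2} - 14b^{2}) = \min\{2\alpha,\, 1 + 2\beta\}.
\]
This also covers the cases $a = 0$ or $b = 0$, using the convention that $\infty$ exceeds every integer. This ultrametric step with distinct parities is the only point needing care.

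\emph{Part (b).} This is the standard property of the $\mathfrak{p}$-adic valuation coming from unique factorization of fractional ideals. Additivity holds because $(xy)R = (xR)(yR)$. For the ultrametric inequality, note that $x, y \in \mathfrak{p}^{m}$ with $m = \min\{\ve(x), \ve(y)\}$ implies $x + y \in \mathfrak{p}^{m}$, working locally at $\mathfrak{p}$. Equality when $\ve(x) \neq \ve(y)$ then follows by the usual argument: say $\ve(x) < \ve(y)$, and write $x = (x + y) - y$.

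\emph{Part (c).} The first identity holds because $\pi^{2j} = 2^{j}\varepsilon^{j}$ and $\varepsilon$ is a unit. For the second, $\pi^{2j+1}R = 2^{j}\pi R$, so it suffices to show $\pi R = 2\Z + \Z\sqrt{14}$.
\begin{itemize}
\item[$\supseteq$:] Both $2 = \pi\bar\pi$ and $\sqrt{14} = \pi - 4$ lie in $\pi R$, and $2\Z + \Z\sqrt{14}$ is closed under the operations needed.
\item[$\subseteq$:] By (a), $a + b\sqrt{14} \in R$ has $\ve \geq 1$ if and only if $a$ is even.
\end{itemize}
Taking $j = 2$ gives $\pi^{5}R = 8\Z + 4\Z\sqrt{14}$.

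\emph{Part (d).} By (a), $\ve(x) \geq 0$ if and only if $2\operatorname{ord}_{2}(a) \geq 0$ and $1 + 2\operatorname{ord}_{2}(b) \geq 0$. This is equivalent to $\operatorname{ord}_{2}(a) \geq 0$ and $\operatorname{ord}_{2}(b) \geq 0$, since $\operatorname{ord}_{2}(b)$ is an integer. That is exactly the condition that $a$ and $b$ have odd denominators.
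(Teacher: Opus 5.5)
Your proof is correct, but it runs in the opposite logical order from the paper's and rests on different tools.

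The paper starts from the explicit quotient $y/\pi = y\bar{\pi}/2 = (2c - 7d) + (2d - c/2)\sqrt{14}$ for $y = c + d\sqrt{14} \in R$. This gives $\pi R = 2\Z + \Z\sqrt{14}$, and hence (c). It also shows that every $y \in R$ not divisible by $\pi$ has odd norm. Writing $x = \pi^{j}y$ then gives $\operatorname{ord}_{2}N(x) = \ve(x)$ on $R$, and multiplicativity extends this to $K^{\times}$.

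You instead get the first equality of (a) directly from ideal theory: the absolute ideal norm is multiplicative on fractional ideals, $N(xR) = |N(x)|$, and every prime $\mathfrak{q} \neq \mathfrak{p}$ has odd norm. You then read off the description of $\pi R$ from (a).

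The ultrametric inequality in (b) also differs. The paper does not localize. It observes that, by (a), $\ve(x) \geq k$ is equivalent to $2\operatorname{ord}_{2}(a) \geq k$ and $1 + 2\operatorname{ord}_{2}(b) \geq k$. These conditions survive addition by the ultrametric property of $\operatorname{ord}_{2}$ on $\Q$.

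Your route is shorter if you take for granted the standard facts about ideal norms and the discrete valuation ring $R_{\mathfrak{p}}$. The paper's route is more elementary and self-contained, needing only unique factorization of ideals and a one-line computation.

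One point of wording: your phrase ``$x, y \in \mathfrak{p}^{m}$'' has to be read in $R_{\mathfrak{p}}$, as your ``working locally'' indicates. Taken literally in $K$ it fails, since $x = 1/3$ has $\ve(x) = 0$ but $x \notin R$. Since you have already proved (a) at that point, you could avoid localization altogether and argue as the paper does.
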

\begin{proof}
  The additivity in (b) is clear if $x = 0$ or $y = 0$. Otherwise it holds
  because $(xy)R = (xR)(yR)$ and fractional ideals factor uniquely into
  prime ideals. For $y = c + d\sqrt{14} \in R$ we have
  \[
    \frac{y}{\pi} = \frac{y\bar{\pi}}{2}
    = (2c - 7d) + \left(2d - \frac{c}{2}\right)\sqrt{14},
  \]
  so $y \in \pi R$ if and only if $c$ is even. This proves
  $\pi R = 2\Z + \Z\sqrt{14}$, and (c) follows from
  $\pi^{2} = 2\varepsilon$. It also shows that if $y \in R$ is not
  divisible by $\pi$, then $c$ is odd and $N(y) = c^{2} - 14d^{2}$ is odd.

  Now let $0 \neq x \in R$, and write $x = \pi^{j}y$ with $j = \ve(x)$ and
  $y \in R$ not divisible by $\pi$. Then $N(x) = 2^{j}N(y)$ with $N(y)$
  odd, so $\operatorname{ord}_{2}N(x) = j = \ve(x)$. Both $\ve$ and
  $\operatorname{ord}_{2} \circ N$ are additive on products, and every
  element of $K^{\times}$ is a quotient of two nonzero elements of $R$, so
  $\ve(x) = \operatorname{ord}_{2}N(x)$ for every $x \in K^{\times}$. If
  $a$ and $b$ are both nonzero, then $\operatorname{ord}_{2}(a^{2})$ is
  even and $\operatorname{ord}_{2}(14b^{2})$ is odd, so the order of their
  difference $N(x)$ is the smaller of the two. If $a = 0$ or $b = 0$, the
  second equality in~\eqref{eq:valuation} is immediate. This proves (a).

  For the inequality in (b) we may assume that $x$, $y$, and $x + y$ are
  nonzero. Let $k = \min\{\ve(x),\ve(y)\}$. By~\eqref{eq:valuation}, the
  inequality $\ve(x) \geq k$ means that
  $2\operatorname{ord}_{2}(a) \geq k$ and
  $1 + 2\operatorname{ord}_{2}(b) \geq k$, where $x = a + b\sqrt{14}$. The
  same holds for $y$, and since $\operatorname{ord}_{2}(r_{1} + r_{2}) \geq
  \min\{\operatorname{ord}_{2}(r_{1}),\operatorname{ord}_{2}(r_{2})\}$ for rational
  numbers $r_{1}$ and $r_{2}$, it holds for $x + y$. Hence
  $\ve(x + y) \geq k$. If $\ve(x) < \ve(y)$, then the inequality applied
  to $x = (x + y) - y$ gives $\ve(x) \geq \min\{\ve(x + y),\ve(y)\}$,
  which forces $\ve(x + y) \leq \ve(x)$, and so $\ve(x + y) = \ve(x)$.
  This proves (b). Finally,~\eqref{eq:valuation} shows that
  $\ve(x) \geq 0$ if and only if $\operatorname{ord}_{2}(a) \geq 0$ and
  $\operatorname{ord}_{2}(b) \geq 0$, which is (d). This proves the lemma.
\end{proof}

\subsection{Euclidean functions and classes} \label{sec:classes}
We use the definition of Cavallar and
Lemmermeyer~\cite[p.~336]{CavallarLemmermeyer2000}. A function
$f \colon R \to \R_{\geq 0}$ is a \emph{Euclidean function} on $R$ if it
satisfies the following conditions with $\kappa = 1$:
\begin{enumerate}
  \item[(i)] $f(R) \cap [0,c]$ is finite for every $c \geq 0$,
  \item[(ii)] $f(r) = 0$ if and only if $r = 0$, and
  \item[(iii)] for all $a,b \in R$ with $b \neq 0$, there is a $q \in R$
    such that $f(a - bq) < \kappa f(b)$.
\end{enumerate}
Condition (iii) is equivalent to saying that there exist $q,r \in R$ such
that $a = bq + r$ where $f(r) < \kappa f(b)$. We say that $q$ is the
\emph{quotient} and that $r = a - bq$ is the remainder of $a$ upon
division by $b$ with the quotient $q$. If $f$ satisfies (i) and (ii), the
infimum of the numbers $\kappa$ for which (iii) holds is the
\emph{Euclidean minimum} of $R$ with respect to $f$. We return to it in
Proposition~\ref{prop:minimumT} and in \S\ref{sec:questions}. Condition
(i) replaces the usual requirement that a Euclidean function take values
in the nonnegative integers. It guarantees that every nonempty set $S$ of
values of $f$ has a least element, since for any $c \in S$ the finite set
$S \cap [0,c]$ has one. Consequently repeated division terminates, because
the values of $f$ at the successive remainders strictly decrease. A
function $f$ on $K$ or on $R$ is \emph{multiplicative} if
$f(xy) =
f(x)f(y)$ for all $x$ and $y$ in its domain, without a
coprimality restriction. When we say that $\phi_{t}$ is a Euclidean
function on $R$, we mean its restriction to $R$.

For $z \in K$, recall that $z + R = \{z + r \colon r \in R\}$ is the
\emph{class} of $z$. Its elements can also be written $w = z - q$ with
$q \in R$, and we call these elements the \emph{normalized remainders} of
$z$, or of the class $z + R$, and $q$ the corresponding quotient. If
$z = a/b$ with $a,b \in R$ and $b \neq 0$, then $w = r/b$, where
$r = a - bq$ is the remainder of $a$ upon division by $b$ with the
quotient $q$. Thus a normalized remainder is an element of $K$, whereas a
remainder is an element of $R$, and the word remainder, without
qualification, always has the latter meaning. A normalized remainder $w$
is \emph{reducing at weight $t$} if $\phi_{t}(w) < 1$, and a class
\emph{obstructs division at weight $t$} if none of its normalized
remainders is reducing at weight $t$. We also call $\phi_{t}(w)$ the
\emph{weighted norm} of $w$.

\begin{lemma} \label{lem:classes}
  Let $t > 0$.
  \begin{enumerate}
    \item[(a)] The function $\phi_{t}$ is multiplicative on $K$, and it
      satisfies condition (ii).
    \item[(b)] The function $\phi_{t}$ satisfies condition (iii) with
      $\kappa = 1$ if and only if no class obstructs division at weight
      $t$.
    \item[(c)] If $t > 1$, then $\phi_{t}$ satisfies condition (i).
  \end{enumerate}
  Consequently, if $t > 1$, then $\phi_{t}$ is a Euclidean function on $R$
  if and only if no class obstructs division at weight $t$.
\end{lemma}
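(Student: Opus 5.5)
The three parts are essentially independent. For (a) and (b) I will use Lemma~\ref{lem:valuation}; for (c) I will use a counting argument over the finitely many norms and valuations that a bounded weighted norm allows.

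\textbf{(a)} I take $x,y \in K$. If either is $0$, both sides of $\phi_{t}(xy)=\phi_{t}(x)\phi_{t}(y)$ are $0$. Otherwise multiplicativity of $|N|$ and additivity of $\ve$ from Lemma~\ref{lem:valuation}(b) give
\[
  \phi_{t}(xy)=|N(x)||N(y)|(t/2)^{\ve(x)+\ve(y)}=\phi_{t}(x)\phi_{t}(y).
\]
Condition (ii) holds because for $x\neq 0$ both $|N(x)|$ and $(t/2)^{\ve(x)}$ are positive.

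\textbf{(b)} Let $a,b\in R$ with $b\neq0$, and set $z=a/b$. As $q$ runs over $R$, the remainder $r=a-bq$ equals $bw$ with $w=z-q$ running over the class $z+R$. By (a), $\phi_{t}(r)=\phi_{t}(b)\phi_{t}(w)$, and $\phi_{t}(b)>0$, so the condition $\phi_{t}(r)<\phi_{t}(b)$ is equivalent to $\phi_{t}(w)<1$. Hence (iii) holds for the pair $(a,b)$ exactly when the class of $a/b$ does not obstruct division at weight $t$. Every element of $K$ has the form $a/b$, so every class arises from some pair, and this gives the equivalence.

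\textbf{(c)} Fix $c\geq0$. Suppose $0\neq x\in R$ with $\phi_{t}(x)\leq c$. Then $|N(x)|$ is a positive integer and $0\leq \ve(x)=\operatorname{ord}_{2}N(x)\leq \log_{2}|N(x)|$. The weight factor satisfies $(t/2)^{\ve(x)}=t^{\ve(x)}/2^{\ve(x)}\geq t^{\ve(x)}/|N(x)|$, and so $\phi_{t}(x)\geq t^{\ve(x)}\geq1$. This alone does not bound $|N(x)|$, so I argue differently. Write $|N(x)|=2^{j}m$ with $m$ odd and $j=\ve(x)$. Then
\[
  \phi_{t}(x)=m\,t^{j}\leq c .
\]
Since $t>1$, this forces $j\leq \log c/\log t$, and since $m\geq1$ it forces $m\leq c$. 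Only finitely many pairs $(j,m)$ are possible, so $\phi_{t}$ takes only finitely many values on $R$ below $c$. This is exactly condition (i), which concerns the set of values and not the set of elements.

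The final sentence of the lemma then follows by combining (a), (b) and (c) with the definition of a Euclidean function. The only point needing care is (c): the naive bound fails for $t<2$, and the decomposition $|N(x)|=2^{\ve(x)}m$ with $m$ odd, from Lemma~\ref{lem:valuation}(a), is what resolves it.
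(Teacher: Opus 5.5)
Your proof is correct and follows the paper's argument. Both use multiplicativity of $|N|$ and additivity of $\ve$ for (a), the identity $\phi_t(a-bq)=\phi_t(b)\,\phi_t(a/b-q)$ with $\phi_t(b)>0$ for (b), and, for (c), the decomposition $|N(x)|=2^{\ve(x)}m$ with $m$ odd, which gives $\phi_t(x)=m\,t^{\ve(x)}$. The false start in (c) is harmless, but you have swapped the justifications there: $m\le c$ follows from $t^{j}\ge 1$, and the bound $j\le \log c/\log t$ follows from $m\ge 1$.
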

\begin{proof}
  (a) The norm is multiplicative, and $\ve$ is additive by
  Lemma~\ref{lem:valuation}(b). Condition (ii) holds because $N(x) \neq 0$
  for $x \neq 0$.

  (b) Suppose that no class obstructs division at weight $t$, and let
  $a,b \in R$ with $b \neq 0$. The class of $z = a/b$ contains a reducing
  normalized remainder $z - q$, and by (a),
  \[
    \phi_{t}(a - bq) = \phi_{t}(b)\,\phi_{t}(z - q) < \phi_{t}(b).
  \]
  Conversely, suppose that $\phi_{t}$ satisfies (iii) with $\kappa = 1$,
  and let $z \in K$. Write $z = a/b$ with $a,b \in R$ and $b \neq 0$.
  Condition (iii) gives $q \in R$ with $\phi_{t}(a - bq) < \phi_{t}(b)$,
  and dividing by $\phi_{t}(b) > 0$ shows that $z - q$ is a reducing
  normalized remainder.

  (c) Let $0 \neq x \in R$ and $j = \ve(x) \geq 0$.
  By~\eqref{eq:valuation}, $|N(x)| = 2^{j}k$ with $k$ a positive odd
  integer, so $\phi_{t}(x) = t^{j}k$. For $t > 1$ and $c \geq 0$, only
  finitely many numbers $t^{j}k$ with $j \geq 0$ and $k \geq 1$ lie in
  $[0,c]$. This proves the lemma.
\end{proof}

In other words, to divide $a$ by $b$ we look for an element of the class
$a/b + R$ whose weighted norm is less than $1$. We say that the class
$z + R$ has \emph{nonnegative valuation} if $\ve(z) \geq 0$. This
condition does not depend on the representative $z$, because every element
of $R$ has nonnegative valuation, so Lemma~\ref{lem:valuation}(b) gives
$\ve(z - q) \geq 0$ for every $q \in R$. The valuations of the normalized
remainders of such a class need not be equal. In contrast, if
$\ve(z) < 0$, then Lemma~\ref{lem:valuation}(b) gives
$\ve(z - q) = \ve(z)$ for every $q \in R$. Thus every class is the class
$R$, a nonintegral class of nonnegative valuation, or a class all of whose
normalized remainders have the same negative valuation.

\subsection{The class of \texorpdfstring{$p$}{p}} \label{sec:pclass}
Let $p = (3 + \sqrt{14})/2$, not to be confused with the prime ideal
$\mathfrak{p}$. The class $p + R$ obstructs division by the
ordinary absolute norm, and it determines the lower endpoint of the window.

\begin{lemma} \label{lem:pclass}
  We have $(\varepsilon - 1)p = 49 + 13\sqrt{14} \in R$, and therefore
  $(\varepsilon^{k} - 1)p \in R$ for every $k \in \Z$. Every element of
  $p + R$ has the form $(A + B\sqrt{14})/2$ with $A$ and $B$ odd integers,
  it has valuation $-2$, and its absolute norm is at least $5/4$, with
  equality at $p$. Consequently
  \begin{equation} \label{eq:lower}
    \min_{q \in R}\phi_{t}(p - q) = \frac{5}{t^{2}}\qquad(t > 0),
  \end{equation}
  and the class $p + R$ obstructs division at weight $t$ if and only if
  $t \leq \sqrt{5}$.
\end{lemma}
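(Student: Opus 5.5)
We first verify the identity $(\varepsilon - 1)p = 49 + 13\sqrt{14}$ by direct multiplication:
\[
  (14 + 4\sqrt{14})(3 + \sqrt{14})/2 = (42 + 56 + (14 + 12)\sqrt{14})/2 = 49 + 13\sqrt{14}.
\]
For $k \geq 1$ we write $\varepsilon^{k} - 1 = (\varepsilon - 1)(1 + \varepsilon + \cdots + \varepsilon^{k-1})$, so $(\varepsilon^{k} - 1)p \in R$. The case $k = 0$ is trivial. For $k < 0$ we use $\varepsilon^{k} - 1 = -\varepsilon^{k}(\varepsilon^{-k} - 1)$, and $\varepsilon^{k} \in R$ because $\varepsilon$ is a unit.

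Next, an element of $p + R$ is $(3 + 2c + (1 + 2d)\sqrt{14})/2$ with $c,d \in \Z$, so it has the form $(A + B\sqrt{14})/2$ with $A$ and $B$ odd. Conversely, every such element lies in $p + R$. Lemma~\ref{lem:valuation}(a) gives
\[
  \ve = \min\{2\cdot(-1),\, 1 + 2\cdot(-1)\} = -2.
\]
The absolute norm is $|A^{2} - 14B^{2}|/4$. For odd $A$ and $B$ we have $A^{2} \equiv B^{2} \equiv 1 \pmod 8$, so $A^{2} - 14B^{2} \equiv 1 - 14 \equiv 3 \pmod 8$. Hence $|N| \cdot 4 \in \{\ldots, -13, -5, 3, 11, \ldots\}$ in absolute value. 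The absolute value could a priori be $3$, so we must exclude $A^{2} - 14B^{2} = 3$. Modulo $7$ this gives $A^{2} \equiv 3$, which is impossible since $3$ is not a square mod $7$ (the squares are $0, 1, 2, 4$). It remains to rule out $A^{2} - 14B^{2} = \pm 1$, which is also impossible since $A^{2} - 14B^{2} \equiv 3 \pmod 8$. Note that $-5 \equiv 3 \pmod 8$, so the value $-5$ is allowed, and $p$ attains it: $N(p) = (9 - 14)/4 = -5/4$. So the smallest absolute value of $A^{2} - 14B^{2}$ is $5$, once $3$ is excluded via mod $7$, and $1$ is excluded since it is not $\equiv 3 \pmod 8$. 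The only odd values below $5$ are $1$ and $3$, with signs: $+3$ is excluded by mod $7$; $-3 \equiv 5 \pmod 8$ is excluded; $\pm 1$ are $\equiv 1$ or $7 \pmod 8$, so they are excluded. Thus $|N(w)| \geq 5/4$, with equality at $p$.

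With constant valuation $-2$, $\phi_{t}(w) = |N(w)|(t/2)^{-2} = 4|N(w)|/t^{2} \geq 5/t^{2}$, with equality at $w = p$, which gives~\eqref{eq:lower}. The class obstructs division if and only if $5/t^{2} \geq 1$, that is, if and only if $t \leq \sqrt{5}$.

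The main step is the case analysis of $A^{2} - 14B^{2}$ modulo $8$ and modulo $7$; everything else is direct computation.
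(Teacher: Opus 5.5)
Your proof is correct and follows the paper's argument essentially step by step. The shared steps are: the direct computation of $(\varepsilon-1)p$ and the divisibility of $\varepsilon^{k}-1$ by $\varepsilon-1$ in $R$; the parametrization of $p+R$ by odd $A,B$; the valuation computed from Lemma~\ref{lem:valuation}(a); and the exclusion of $|A^{2}-14B^{2}|\in\{1,3\}$ via the congruence modulo $8$ together with the fact that $3$ is not a square modulo $7$. The only blemishes are expository: your mod-$8$ case analysis is stated twice, and the description of the possible values is phrased loosely. Neither affects the mathematics.
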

\begin{proof}
  The value of $(\varepsilon - 1)p$ is a direct calculation. Since
  $\varepsilon$ is a unit, $(\varepsilon^{k} - 1)p$ is a multiple of
  $(\varepsilon - 1)p$ in $R$ for every $k \in \Z$. An element $p - q$
  with $q = a + b\sqrt{14} \in R$ equals
  $((3 - 2a) + (1 - 2b)\sqrt{14})/2$, so it has the form
  $(A + B\sqrt{14})/2$ with $A$ and $B$ odd, and every such element is of
  this form. By~\eqref{eq:valuation}, its valuation is
  $\min\{-2,-1\} = -2$. Also
  \[
    4\,N(p - q) = A^{2} - 14B^{2}
  \]
  is an odd integer, and it is congruent to $1 - 14 \equiv 3 \pmod{8}$,
  because odd squares are congruent to $1$ modulo $8$. Its absolute value
  is therefore not $1$. It is not $3$ either, because the value $-3$ is
  congruent to $5$ modulo $8$, and the value $3$ would give
  $A^{2} \equiv 3 \pmod{7}$, which is impossible because the squares
  modulo $7$ are $0$, $1$, $2$, and $4$. Hence $|A^{2} - 14B^{2}| \geq 5$
  and $|N(p - q)| \geq 5/4$. Equality holds for $q = 0$, since
  $N(p) = (9 - 14)/4 = -5/4$. Finally,
  $\phi_{t}(p - q) = |N(p - q)|(t/2)^{-2} = 4|N(p - q)|/t^{2}$, which
  gives~\eqref{eq:lower}. This proves the lemma.
\end{proof}

Every class other than $p + R$ can be divided by the ordinary absolute
norm.

\begin{proposition} \label{prop:ordinary}
  Every $z \in K$ with $z \notin p + R$ has a normalized remainder
  $w \in z + R$ with $|N(w)| < 1$.
\end{proposition}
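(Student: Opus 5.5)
We have to show that every class other than $p + R$ contains a normalized remainder of absolute norm less than $1$. This is the classical inhomogeneous-minimum statement for $\Z[\sqrt{14}]$: the Euclidean minimum is $M = 5/4$, attained only on the orbit of $p$, and the second minimum is below $1$. We do not want to rely on the literature. The plan is to prove it with the same orbit-coding machinery the paper builds for the weighted problem, taking the weight $t = 2$, where $\phi_{2} = |N|$.

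\emph{Step 1 (reduction to a torus).} Identify $K \otimes \R$ with $\R^{2}$ by $x \mapsto (x,\bar{x})$. Then $R$ is a lattice, and multiplication by $\varepsilon$ acts on the torus $\R^{2}/R$ as a hyperbolic automorphism. The function $|N(w)| = |w\bar{w}|$ is invariant under this action, and $w \mapsto \varepsilon w$ permutes classes. So we may choose any point in the $\varepsilon$-orbit of a class and look for a nearby lattice point $q$ with $|(z-q)(\bar z-\bar q)| < 1$. Every class is periodic under $\varepsilon$, since $\varepsilon$ permutes $d^{-1}R/R$.

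\emph{Step 2 (finite covering).} Take a fundamental domain of $R$, for instance the box $\{a + b\sqrt{14} : |a| \le 1/2,\ |b| \le 1/2\}$ in coefficient coordinates. Subdivide it into small squares. A square is \emph{deleted} if a single $q$ from a short list of small quotients satisfies $|N(z - q)| < 1$ on the whole closed square. This is checked exactly, because $N(z-q)$ is a quadratic polynomial in the coefficients, so its extreme values on a square are computable.

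For the surviving squares, build the directed transition graph under $z \mapsto \varepsilon z$ reduced mod $R$, and label each edge by its lattice translation. A class with no remainder of norm $<1$ has its entire periodic orbit inside the surviving squares. Its itinerary is therefore a closed walk, and by the coding lemma its labels determine the point. Refining the subdivision near the survivors should shrink the graph until its only closed walks are the trivial loop at $0$ (the class $R$) and the walks of the orbit of $\pm p$. From Lemma~\ref{lem:pclass}, $(\varepsilon - 1)p \in R$, so $p$ is a fixed point of the torus map. The same applies to its negative, which lies in the same class because $-p \equiv p \bmod R$.

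\emph{Step 3 (conclusion).} Any closed walk determines a unique orbit through the convergent series in $\varepsilon^{-1}$. If the only closed walks are the constant ones at $0$ and at $p$, then every obstructing class equals $R$ or $p + R$. The class $R$ is excluded because we are dividing by a nonunit in the Euclidean sense; the statement concerns $z \notin R$ with the remainder $0$ available otherwise, and for $z \in R$ one simply takes $w = 0$.

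\emph{Main obstacle.} The difficulty lies near $p$. Points close to $p$ have remainders whose norm is close to $5/4 > 1$, so no finite cover can delete a full neighbourhood of $p$. The graph will therefore contain squares around $p$, and walks that linger there while eventually leaving.

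To handle this, we use hyperbolicity. A periodic orbit that stays in the small squares around $p$ for all time must equal $p$, because the coding is injective. An orbit that enters and leaves must pass through a transit square. So we would refine until every walk leaving the $p$-neighbourhood is forced into a deleted region within boundedly many steps. Equivalently, we would bound the coordinates of any non-$p$ orbit at the exit time using the series, and show directly that some quotient gives norm $<1$ there. This bookkeeping, and keeping it exact in $K$, is where the computer calculation is needed.
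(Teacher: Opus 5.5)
The reduction in Steps 1 and 2 is sound: periodicity, the covering test, and the fact that an obstructing class follows a closed walk in the graph of uncovered squares. The conclusion, however, rests on a computation that you neither specify nor perform. You say the graph ``should shrink'' until its only closed walks code $0$ and $p$, and for walks leaving the neighbourhood of $p$ you only say what you ``would'' do. That unperformed step is the entire content of the proposition, and it is not routine.

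The squares at $p$ can never be covered, since $|N(p-q)|\ge 5/4$ for every $q\in R$ (compare Lemma~\ref{lem:exceptional}). There are also classes whose orbits stay near $p$ except for short excursions, such as the classes $z_m$ of \S\ref{sec:family}. So everything hinges on showing that every excursion away from $p$ meets a region where some quotient gives norm less than $1$.

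Moreover, a covering test cannot tell classes from irrational points. If some irrational point of the torus other than $p$ had all its remainders of norm at least $1$ (a homoclinic point of $p$ modulo $R$, say), its orbit would meet no covered square at any resolution. So your refinement scheme implicitly needs a statement stronger than the proposition, and you give no reason for it.

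Your fallback, bounding the coordinates by the series when an orbit leaves, is the right mechanism. But it needs exactly the ingredients you leave out:
\begin{itemize}
\item a definite passage that every closed walk not coding a point of $\mathcal{F}$ must make (Proposition~\ref{prop:finite}(4) and Lemma~\ref{lem:passage});
\item labels forced for several steps on both sides of that passage (Proposition~\ref{prop:finite}(3) and~(5));
\item one-sided bounds on the tails, coming from the extremality of the period-$2$ orbits (Lemma~\ref{lem:minimum}).
\end{itemize}
Together these give Lemma~\ref{lem:cornerbound}. Incidentally, modulo $R$ the quotient $q=0$ covers a neighbourhood of $0$, so no loop at $0$ survives. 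The paper's point $0$ survives only because it works modulo $\pi^{5}R$ and discards quotients of valuation at least $5$.

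The paper needs no new calculation at all. Suppose $z\notin p+R$ has no normalized remainder of absolute norm less than $1$. Then $z\notin R$, and $|N(\rho-q)|\ge 1\ge(8/9)^{v_{\pi}(q)}$ for every $q$. So the point of $z$ lies in the set $\mathcal{X}$ of the weighted problem (Lemma~\ref{lem:X}(c)). It is periodic, and its orbit is not contained in $\mathcal{F}$ because $z\notin R\cup(p+R)$. After replacing $z$ by $-z$, Lemma~\ref{lem:passage} puts some point of the orbit in a critical square. Lemma~\ref{lem:cornerbound} then gives the normalized remainder $\xi_{k}-(4+4\sqrt{14})$ of $\varepsilon^{k}z$ with absolute norm at most $\nu<1$, and multiplying by $\varepsilon^{-k}$ gives the contradiction.

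The point you missed is that the weighted test at the trial weight $9/4$ uses the thresholds $(8/9)^{h}\le 1$. The finite calculation already done for the weighted problem therefore controls the ordinary norm as well.
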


This is a classical fact. The Euclidean minimum of $R$ with respect to
$|N|$, which is the classical Euclidean minimum
$M(K) = \sup_{z \in K}\inf_{q \in R}|N(z - q)|$ of the field, is $5/4$. It
is attained only on the class $p + R$, and the next largest value of
$\inf_{q \in R}|N(z - q)|$, the second Euclidean minimum of Cavallar and
Lemmermeyer, is $31/32$ (see Cavallar and Lemmermeyer~\cite[Section~3,
p.~339]{CavallarLemmermeyer2000}). We give a self-contained proof in
\S\ref{sec:endpoint}, which does not use Proposition~\ref{prop:shape}.

\subsection{The shape of the window} \label{sec:critical}
Let
\[
  \mathcal{Z} = \{z \in K \setminus R \colon \ve(z) \geq 0\}
\]
be the set of all elements of the nonintegral classes of nonnegative
valuation. Thus $z \in \mathcal{Z}$ if and only if the class $z + R$ is
nonintegral and has nonnegative valuation. Raising the weight $t$ makes
these classes harder to divide. The next proposition shows that they alone
decide which weights larger than $\sqrt{5}$ are in the window, and that
the window is an interval.

\begin{proposition} \label{prop:shape}
  \mbox{}
  \begin{enumerate}
    \item[(a)] If $0 < t \leq \sqrt{5}$, then $\phi_{t}$ is not a
      Euclidean function on $R$.
    \item[(b)] If $t > \sqrt{5}$, then $\phi_{t}$ is a Euclidean function
      on $R$ if and only if every $z \in \mathcal{Z}$ has a normalized
      remainder that is reducing at weight $t$.
    \item[(c)] Let $z \in \mathcal{Z}$, and let $w$ be a normalized
      remainder of $z$ that is reducing at weight $t$ for some $t > 0$.
      Then $w$ is reducing at every weight $x$ with $0 < x \leq t$.
    \item[(d)] If $\phi_{t}$ is a Euclidean function on $R$ for some
      $t > 0$, then $(\sqrt{5},t] \subseteq w(\mathfrak{p})$.
  \end{enumerate}
\end{proposition}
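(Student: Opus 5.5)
The plan is to sort classes into the three types of \S\ref{sec:classes}: the class $R$, the classes of negative valuation, and $\mathcal{Z}$. Lemma~\ref{lem:classes} then reduces everything to asking which classes obstruct division. One point about small weights first. For $t \leq 1$, condition (i) of the definition may fail. We will not need (i) in the negative direction, because failure of (iii) already suffices.

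For (a), let $0 < t \leq \sqrt{5}$. By Lemma~\ref{lem:pclass} the class $p + R$ obstructs division at weight $t$. Write $p = a/b$ with $a, b \in R$, for instance $a = 3 + \sqrt{14}$ and $b = 2$. If some $q$ gave $\phi_t(a - bq) < \phi_t(b)$, then multiplicativity (Lemma~\ref{lem:classes}(a)) would make $p - q$ reducing, which is impossible. So (iii) fails, as in the proof of Lemma~\ref{lem:classes}(b).

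For (b), let $t > \sqrt{5}$. Since $t > 1$, Lemma~\ref{lem:classes} says $\phi_t$ is Euclidean if and only if no class obstructs division. The class $R$ contains $0$ with $\phi_t(0) = 0 < 1$, so it never obstructs. It remains to show that every class $z + R$ with $\ve(z) < 0$ has a reducing element; then (b) follows.
\begin{itemize}
\item If $z \in p + R$, then by \eqref{eq:lower} the minimum weighted norm is $5/t^2 < 1$.
\item Otherwise Proposition~\ref{prop:ordinary} gives $w \in z + R$ with $|N(w)| < 1$. Since $\ve(w) = \ve(z) \leq -1$, we get $\phi_t(w) = |N(w)|(t/2)^{\ve(w)}$. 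This is at most $|N(w)|$ only when $t/2 \geq 1$, which fails here since $t < 2.25$ need not hold and in fact $t$ can lie in $(\sqrt5,2)$.
\end{itemize}
The second case needs a sharper bound. Write $j = -\ve(w) \geq 1$, so $|N(w)| = k/2^{j}$ for some rational $k$ with odd numerator and odd denominator. Then $\phi_t(w) = k/t^{j}$. It would suffice to find $w$ in the class with $k < t^{j}$.

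This is the main obstacle. Using only Proposition~\ref{prop:ordinary}, we know $k < 2^{j}$, but we need $k < t^{j}$, and $t$ may be less than $2$. My first attempt would be to use the finer classical data: the second minimum $31/32$ of \S\ref{sec:pclass}, combined with the parity structure of negative-valuation classes. For $j = 1$ the class has the form $(A + B\sqrt{14})/\text{odd}$ with $A$ even and $B$ over $2\cdot$odd; for $j = 2$ the shape is analogous to that of $p$. One could also multiply by $\pi^{j}$ to move the class to nonnegative valuation. If the earlier results do not suffice, I would state this case as an input from the paper's self-contained proof in \S\ref{sec:endpoint}. That section is said to show the needed bounds for all classes other than $p + R$ without using this proposition, and I would check that it gives $\phi_t(w) < 1$ for negative valuation whenever $t > \sqrt5$.

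For (c), suppose $\ve(z) \geq 0$. Every element of $z + R$, in particular $w$, has $\ve(w) \geq 0$. Hence $x \mapsto \phi_x(w) = |N(w)|(x/2)^{\ve(w)}$ is nondecreasing in $x$, so $\phi_x(w) \leq \phi_t(w) < 1$ for $0 < x \leq t$.

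For (d), suppose $\phi_t$ is Euclidean. By (a), $t > \sqrt{5}$. Take $x \in (\sqrt{5}, t]$. By (b), every $z \in \mathcal{Z}$ has a remainder that is reducing at weight $t$, and by (c) that remainder is also reducing at weight $x$. Applying (b) at weight $x$ then shows that $\phi_x$ is Euclidean.
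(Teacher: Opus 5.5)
Parts (a), (c), and (d) of your proposal follow the paper's argument. Part (b) is left unfinished because of a numerical error. Part (d) inherits that gap, since it uses the ``if'' direction of (b) at the weight $x$.

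The error is your claim that $t$ can lie in $(\sqrt{5},2)$. That interval is empty, because $\sqrt{5} = 2.236\ldots > 2$ (equivalently, $5 > 4$). So every $t > \sqrt{5}$ satisfies $t/2 > 1$. Take the normalized remainder $w$ given by Proposition~\ref{prop:ordinary}, which has $\ve(w) = \ve(z) < 0$. Then $(t/2)^{\ve(w)} < 1$, hence $\phi_{t}(w) < |N(w)| < 1$. In your own notation this reads $k < 2^{j} < t^{j}$. This is exactly how the paper closes the case of the classes of negative valuation other than $p + R$.

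The ``main obstacle'' you identify therefore does not exist. None of your fallback ideas is needed: the second minimum $31/32$, the parity structure of negative-valuation classes, and multiplication by $\pi^{j}$. Proposition~\ref{prop:ordinary}, which is proved in \S\ref{sec:endpoint} independently of this proposition, already gives everything required. With this one line added, your proof is complete and agrees with the paper's.
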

\begin{proof}
  (a) By Lemma~\ref{lem:pclass}, the class $p + R$ obstructs division at
  weight $t$. By Lemma~\ref{lem:classes}(b), $\phi_{t}$ does not satisfy
  condition (iii) with $\kappa = 1$.

  (b) Let $t > \sqrt{5}$. Then $t > 2$, and by Lemma~\ref{lem:classes},
  $\phi_{t}$ is a Euclidean function if and only if no class obstructs
  division at weight $t$. The class $R$ has the reducing normalized
  remainder $0$. The class $p + R$ has the reducing normalized remainder
  $p$, because $\phi_{t}(p) = 5/t^{2} < 1$. Let $z \notin p + R$ with
  $\ve(z) < 0$. By Proposition~\ref{prop:ordinary}, $z$ has a normalized
  remainder $w$ with $|N(w)| < 1$, and $\ve(w) = \ve(z) < 0$. Since
  $t/2 > 1$,
  \[
    \phi_{t}(w) = |N(w)|\left(\frac{t}{2}\right)^{\ve(w)} < |N(w)| < 1.
  \]
  The remaining classes are the classes of the elements of $\mathcal{Z}$.
  This proves (b).

  (c) Since the class $z + R$ has nonnegative valuation, $\ve(w) \geq 0$.
  Hence, for $0 < x \leq t$,
  \[
    \phi_{x}(w) = |N(w)|\left(\frac{x}{2}\right)^{\ve(w)}
    \leq |N(w)|\left(\frac{t}{2}\right)^{\ve(w)} = \phi_{t}(w) < 1.
  \]

  (d) Suppose that $\phi_{t}$ is a Euclidean function. By (a),
  $t > \sqrt{5}$, and by (b), every $z \in \mathcal{Z}$ has a normalized
  remainder that is reducing at weight $t$. Let $\sqrt{5} < x \leq t$. By
  (c), the same normalized remainders are reducing at weight $x$, so
  $\phi_{x}$ is a Euclidean function by (b). This proves the proposition.
\end{proof}

Thus the first assertion of Theorem~\ref{thm:main} is equivalent to the
statements (A) and (B) of \S\ref{sec:strategy}, which are
Propositions~\ref{prop:upper} and~\ref{prop:division}.

\subsection{Unit orbits} \label{sec:orbits}
Multiplication by $\varepsilon$ permutes the classes. The \emph{orbit} of
the class $z + R$ is the set of classes $\varepsilon^{k}z + R$ with
$k \in \Z$. The class is \emph{periodic} if $(\varepsilon^{P} - 1)z \in R$
for some integer $P \geq 1$, and the least such $P$ is its \emph{period}.
The next lemma shows that every class is periodic, and it gives a
convenient representative of a class of nonnegative valuation.

\begin{lemma} \label{lem:periodic}
  \mbox{}
  \begin{enumerate}
    \item[(a)] For every $z \in K$ there is an integer $P \geq 1$ such
      that $(\varepsilon^{P} - 1)z \in \pi^{5}R$. In particular, every
      class is periodic.
    \item[(b)] If $z \in K$ and $\ve(z) \geq 0$, then $z + R$ contains an
      element $\xi$ with $\ve(\xi) \geq 5$. For every such $\xi$ and every
      $q \in R$ with $\ve(q) \leq 4$,
      \begin{equation} \label{eq:fixedval}
        \ve(\xi - q) = \ve(q).
      \end{equation}
    \item[(c)] If $\ve(\xi) \geq 5$, then $\ve(\varepsilon\xi) \geq 5$,
      and $\ve(\xi + \gamma) \geq 5$ for every $\gamma \in \pi^{5}R$.
  \end{enumerate}
\end{lemma}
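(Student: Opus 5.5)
We prove the three parts in the order (c), (b), (a), since (a) uses the finiteness of a quotient ring, which is easiest to set up last.

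For (c), we use that $\varepsilon$ is a unit of $R$. Lemma~\ref{lem:valuation}(a) gives $\ve(\varepsilon) = \operatorname{ord}_{2} N(\varepsilon) = \operatorname{ord}_{2}(1) = 0$. By Lemma~\ref{lem:valuation}(b), $\ve(\varepsilon\xi) = \ve(\xi) \geq 5$. Every $\gamma \in \pi^{5}R$ has $\ve(\gamma) \geq 5$, so the ultrametric inequality in Lemma~\ref{lem:valuation}(b) gives $\ve(\xi + \gamma) \geq 5$.

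For (b), write $z = a + b\sqrt{14}$. By Lemma~\ref{lem:valuation}(d), $a$ and $b$ have odd denominators. An odd integer is invertible modulo $8$, so there are integers $c$ and $d$ with $c \equiv a \pmod{8\Z_{(2)}}$ and $d \equiv b \pmod{4\Z_{(2)}}$. Here $\Z_{(2)}$ denotes the rationals with odd denominator; we introduce it only as a phrase in the text, not as a macro. Put $\xi = z - (c + d\sqrt{14})$. Then $\operatorname{ord}_{2}(a - c) \geq 3$ and $\operatorname{ord}_{2}(b - d) \geq 2$. Formula~\eqref{eq:valuation} gives $\ve(\xi) \geq \min\{6, 5\} = 5$. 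This is the same computation that describes $\pi^{5}R = 8\Z + 4\Z\sqrt{14}$ in Lemma~\ref{lem:valuation}(c). For~\eqref{eq:fixedval}, note that $\ve(q) \leq 4 < 5 \leq \ve(\xi)$, so the two valuations are distinct. The equality case of Lemma~\ref{lem:valuation}(b) then gives $\ve(\xi - q) = \ve(q)$.

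For (a), write $z = x/d$ with $x \in R$ and $d$ a positive integer. Let $M$ be the ideal $d\pi^{5}R$. The quotient $R/M$ is finite, since it has $|N(d\pi^{5})| = 32d^{2}$ elements. Multiplication by $\varepsilon$ induces a bijection of $R/M$, because $\varepsilon$ is a unit. More simply, the image of $\varepsilon$ lies in the finite group $(R/M)^{\times}$, so it has some finite order $P \geq 1$. This means $\varepsilon^{P} - 1 \in d\pi^{5}R$. Therefore
\[
  (\varepsilon^{P} - 1)z = \frac{(\varepsilon^{P} - 1)x}{d} \in \pi^{5}R.
\]
Since $\pi^{5}R \subseteq R$, the class $z + R$ is periodic.

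None of these steps is a real obstacle. The only point needing care is the approximation in (b): rationals with odd denominators must be approximated by integers to the required $2$-adic precision.
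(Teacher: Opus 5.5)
Your proof is correct and follows the paper's argument in all three parts. The paper also derives (c) from $\ve(\varepsilon) = 0$ and the ultrametric inequality, obtains $\xi$ in (b) by approximating the odd-denominator coefficients of $z$ modulo $8$ and $4$, and proves (a) by a finiteness argument. The only difference is cosmetic: in (a) the paper lets $\varepsilon$ permute the finite group $d^{-1}R/\pi^{5}R$ and takes a power that fixes $z + \pi^{5}R$, whereas you use the finite order of $\varepsilon$ in $(R/d\pi^{5}R)^{\times}$, which is the same argument transported by multiplication by $d$.
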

\begin{proof}
  (a) Choose a positive integer $d$ with $dz \in R$. The group
  $G = d^{-1}R/\pi^{5}R$ is finite, since its order is
  $[d^{-1}R:R][R:\pi^{5}R] = 32d^{2}$ by Lemma~\ref{lem:valuation}(c).
  Multiplication by the unit $\varepsilon$ maps $d^{-1}R$ and $\pi^{5}R$
  onto themselves, so it induces a permutation of $G$. Hence some power
  $\varepsilon^{P}$ with $P \geq 1$ fixes the element $z + \pi^{5}R$ of
  $G$.

  (b) By Lemma~\ref{lem:valuation}(d), we may write
  $z = (X + Y\sqrt{14})/d$ with $X,Y \in \Z$ and $d$ a positive odd
  integer. Choose $a,b \in \Z$ with $da \equiv X \pmod{8}$ and
  $db \equiv Y \pmod{4}$. Then
  \[
    d(z - a - b\sqrt{14}) = (X - da) + (Y - db)\sqrt{14} \in 8\Z + 4\Z\sqrt{14}
    = \pi^{5}R.
  \]
  Since $\ve(d) = \operatorname{ord}_{2}(d^{2}) = 0$, the element
  $\xi = z - a - b\sqrt{14}$ has valuation at least $5$. If $q \in R$ and
  $\ve(q) \leq 4$, then $\ve(q) < \ve(\xi)$, and
  Lemma~\ref{lem:valuation}(b) gives~\eqref{eq:fixedval}.

  (c) This follows from Lemma~\ref{lem:valuation}(b), because
  $\ve(\varepsilon) = 0$ and every element of $\pi^{5}R$ has valuation at
  least $5$. This proves the lemma.
\end{proof}
\section{Squares, labels, and the coding of orbits} \label{sec:graph}
In this section we describe the squares, the covering test, and the finite
directed graph, and we prove the lemmas that connect the orbits of classes
to closed walks in the graph. The calculation itself is carried out in
\S\ref{sec:weighted}. Ramsey codes the orbit of a point by the sequence of
members of a partition that the orbit visits, and his partitions have the
property that every sequence allowed by the transitions between their
members is the code of the orbit of some
point~\cite[Section~4]{Ramsey2025}. In the language of this paper, every
bi-infinite walk in the corresponding graph is followed by the orbit of
some point. Our partition into squares does not have this property, and we
use only the fact that the orbit of every point follows a walk in the
graph (Lemma~\ref{lem:edges}). What we do use is the sequence of labels of
the walk, which records the lattice translations that return the orbit to
the fundamental domain. Lemma~\ref{lem:coding} shows that the labels
determine the orbit through a series in powers of $\varepsilon^{-1}$, and
in \S\ref{sec:weighted} this series converts the labels found by the
calculation into the number $T$. Directed graphs of the regions left after
a norm covering also occur in Lezowski's algorithm for computing Euclidean
minima~\cite[Section~2.4]{Lezowski2014}. His simplification of the graph
uses merging of regions and strongly connected
components~\cite[Section~3.2.2]{Lezowski2014}.

\subsection{Coordinates and squares} \label{sec:squares}
Let $\Lambda = 8\Z \times 4\Z$. By Lemma~\ref{lem:valuation}(c), this is
the set of points $(a,b)$ for which $a + b\sqrt{14}$ lies in $\pi^{5}R$.
We write a point of $\R^{2}$ as $\rho = (x,y)$, and when $x$ and $y$ are
rational we call $\rho$ the \emph{coefficient point} of
$x + y\sqrt{14} \in K$. Following Ramsey~\cite[Section~3]{Ramsey2025}, we
call
\[
  \rho_{s} = x - y\sqrt{14},\qquad \rho_{u} = x + y\sqrt{14}
\]
the \emph{stable} and \emph{unstable coordinates} of $\rho$. If $\rho$ is
the coefficient point of $z \in K$, then $\rho_{u} = z$ and
$\rho_{s} = \bar{z}$. We write
$N(\rho) = \rho_{s}\rho_{u} = x^{2} - 14y^{2}$, which is $N(z)$ in this
case, and for $q = a + b\sqrt{14} \in R$ we write $\rho - q$ for the point
$(x - a,y - b)$, so that
$N(\rho - q) = (\rho_{s} - \bar{q})(\rho_{u} - q)$.

For $\rho = (x,y)$ let $\varepsilon\rho = (15x + 56y,\,4x + 15y)$. When
$x$ and $y$ are rational, this is the coefficient point of
$\varepsilon(x + y\sqrt{14})$, and in general
\[
  (\varepsilon\rho)_{u} = \varepsilon\rho_{u},\qquad
  (\varepsilon\rho)_{s} = \varepsilon^{-1}\rho_{s}.
\]
Since $\varepsilon = 29.966\ldots$, repeated multiplication by
$\varepsilon$ expands the unstable coordinate and contracts the stable
coordinate, which explains Ramsey's names. The map
$\rho \mapsto \varepsilon\rho$ is linear and invertible, and it maps
$\Lambda$ onto $\Lambda$, because $\varepsilon\pi^{5}R = \pi^{5}R$. We
write $\varepsilon^{k}\rho$ for its $k$th iterate, $k \in \Z$. For
$\rho = (x,y) \in \R^{2}$ let $\|\rho\| = \max\{|x|,|y|\}$, and define the
\emph{distance} between two subsets of $\R^{2}$ as the infimum of
$\|\rho - \rho'\|$ over points $\rho$ of the first and $\rho'$ of the
second. For every $\rho = (x,y) \in \R^{2}$ we have
\begin{equation} \label{eq:coordbound}
  |\rho_{u}|,|\rho_{s}| \leq |x| + \sqrt{14}\,|y| \leq (1 + \sqrt{14})\|\rho\|.
\end{equation}

We work on the torus $\R^{2}/\Lambda$. If $\rho$ is the coefficient point
of $z \in K$, we call $\rho + \Lambda$ the \emph{point of $z$} in
$\R^{2}/\Lambda$. The \emph{orbit} of a point $\rho + \Lambda$ is the set
of points $\varepsilon^{k}\rho + \Lambda$ with $k \in \Z$, which we index
by $k$, and the point is \emph{periodic} if
$\varepsilon^{P}\rho - \rho \in \Lambda$ for some $P \geq 1$. The least
such $P$ is its \emph{period}. If $\rho$ is the coefficient point of an
element of $K$, then $\rho + \Lambda$ is periodic by
Lemma~\ref{lem:periodic}(a). The \emph{fundamental domain} is
$D = [0,8) \times [0,4)$. A \emph{representative} of a point of
$\R^{2}/\Lambda$ is a point of $\R^{2}$ congruent to it modulo $\Lambda$.
Every point has exactly one representative in $D$, which we call its
\emph{representative in $D$}, and at most four representatives in the
closure of $D$. We use three symmetries of the torus. Negation
$\rho \mapsto -\rho$, conjugation $(x,y) \mapsto (x,-y)$, and their
composition, the reflection $(x,y) \mapsto (8 - x,y)$ modulo $\Lambda$,
map $\Lambda$ onto itself. If $\rho$ is the coefficient point of
$\xi \in K$, these maps send it to the coefficient points of $-\xi$,
$\bar{\xi}$, and $8 - \bar{\xi}$, and the unstable coordinate of the
reflected point is $8 - \rho_{s}$. Negation commutes with multiplication
by $\varepsilon$. Conjugation and the reflection reverse the direction of
time, since the conjugate of $\varepsilon\rho$ is $\varepsilon^{-1}$ times
the conjugate of $\rho$, so they carry the orbit of a point onto the orbit
of its image, traversed in the opposite order. In all three cases the
image of a periodic orbit is a periodic orbit.

For a positive integer $L$, which we call the \emph{resolution}, we cover
the closure of $D$ by the closed squares
\[
  C_{L}(i,j) = [i/L,(i + 1)/L] \times [j/L,(j + 1)/L],
\]
where $(i,j)$ runs over the integer points of $LD$, that is,
$0 \leq i < 8L$ and $0 \leq j < 4L$. The corresponding half-open squares
partition $D$, while their closures are used for the covering tests. A
point of $\R^{2}/\Lambda$ whose representative in $D$ is $(x,y)$ is
\emph{assigned} to the square
$C_{L}(\lfloor Lx\rfloor,\lfloor Ly\rfloor)$. The \emph{itinerary} of an
orbit at resolution $L$ is the sequence of squares assigned to its points.
A closed square is \emph{incident} to a point of $\R^{2}/\Lambda$ if it
contains a representative of that point, and it is incident to a set of
points if it is incident to one of them. At a vertex of the grid, four
squares are incident, including squares on the other side of the boundary
of $D$.

On each square we test finitely many quotients. The tested quotients of
the square $C_{L}(i,j)$ are the following $32$ elements:
\begin{equation} \label{eq:qbox}
  q = a + b\sqrt{14},\qquad
  \lfloor i/L\rfloor - 3 \leq a \leq \lfloor i/L\rfloor + 4,\qquad
  \lfloor j/L\rfloor - 1 \leq b \leq \lfloor j/L\rfloor + 2.
\end{equation}
The reason for this box is as follows. We work modulo
$\pi^{5}R = 4\mathfrak{p} = 8\Z + 4\sqrt{14}\,\Z$, whose coefficient
points form the lattice $\Lambda$. A box of $8 \times 4$ consecutive
integer points therefore contains exactly one element of each of the $32$
classes of $R$ modulo $\pi^{5}R$. The box~\eqref{eq:qbox} is the one that
is centered on the unit square $[\lfloor i/L\rfloor,\lfloor i/L\rfloor + 1]
\times [\lfloor j/L\rfloor,\lfloor j/L\rfloor + 1]$ containing
$C_{L}(i,j)$. So, for each class modulo $\pi^{5}R$, we test one quotient
of that class near the square. For every $q = a + b\sqrt{14} \in R$, let
$I_{1} = [i - aL,\,i + 1 - aL]$ and $I_{2} = [j - bL,\,j + 1 - bL]$, and
let $B_{L}(i,j;q)$ be the integer
\begin{equation} \label{eq:cellbound}
  B_{L}(i,j;q) = \max\Bigl\{\Bigl|\min_{X \in I_{1}}X^{2}
  - 14\max_{Y \in I_{2}}Y^{2}\Bigr|,\
  \Bigl|\max_{X \in I_{1}}X^{2} - 14\min_{Y \in I_{2}}Y^{2}\Bigr|\Bigr\}.
\end{equation}
As $\rho = (x,y)$ runs over $C_{L}(i,j)$, the numbers $L(x - a)$ and
$L(y - b)$ run independently over $I_{1}$ and $I_{2}$, and
$X^{2} - 14Y^{2}$ increases with $X^{2}$ and decreases with $Y^{2}$. Hence
\begin{equation} \label{eq:cellmax}
  \max_{\rho \in C_{L}(i,j)}|N(\rho - q)| = \frac{B_{L}(i,j;q)}{L^{2}}.
\end{equation}

Covered squares, which we define in the next subsection, have a simple
geometric meaning. For a quotient $q$ and a number $c > 0$, the set of
points $\rho$ with $|N(\rho - q)| < c$ is the region between the four
branches of the two hyperbolas $N(\rho - q) = \pm c$. Its asymptotes are
the two lines through the coefficient point of $q$ on which one of the
coordinates $\rho_{s} - \bar{q}$ and $\rho_{u} - q$ vanishes.
By~\eqref{eq:cellmax}, an inequality of the form $B_{L}(i,j;q) < cL^{2}$
says that the square lies inside this region. Then every class containing
an element whose coefficient point lies in the square has a normalized
remainder, with the quotient $q$, of absolute norm less than $c$. Ramsey
calls a member of his partition \emph{trapped} when its closure lies in
the union of finitely many such regions~\cite[Section~5]{Ramsey2025}, and
our covered squares play the same role. A square may fail the test for one
quotient and pass it for another (Figure~\ref{fig:bands}), and an
uncovered square has merely failed finitely many tests. This does not mean
that it contains a point without a small normalized remainder.

\begin{figure}[tbp]
  \centering
  \begin{tikzpicture}[x=1.6cm,y=1.6cm,
  every node/.style={font=\footnotesize}]
    % Exact level curves x^2-14y^2=+-1, in coefficient coordinates
    % centered at the quotient q. The gray shading is |N(rho-q)|<1.
    \begin{scope}
      \clip (-2.75,-0.9) rectangle (2.75,0.9); \fill[gray!23]
      plot[domain=-2.75:2.75,samples=120] (\x,{sqrt((\x*\x+1)/14)}) --
      plot[domain=2.75:1,samples=60] (\x,{sqrt((\x*\x-1)/14)}) -- (-1,0)
      -- plot[domain=-1:-2.75,samples=60] (\x,{sqrt((\x*\x-1)/14)}) --
      cycle; \fill[gray!23] plot[domain=-2.75:2.75,samples=120]
      (\x,{-sqrt((\x*\x+1)/14)}) -- plot[domain=2.75:1,samples=60]
      (\x,{-sqrt((\x*\x-1)/14)}) -- (-1,0) --
      plot[domain=-1:-2.75,samples=60] (\x,{-sqrt((\x*\x-1)/14)}) --
      cycle; \draw[gray!65,densely dashed] (-2.75,{-2.75/sqrt(14)}) --
      (2.75,{2.75/sqrt(14)}); \draw[gray!65,densely dashed]
      (-2.75,{2.75/sqrt(14)}) -- (2.75,{-2.75/sqrt(14)}); \draw[thick]
      plot[domain=-2.75:2.75,samples=120] (\x,{sqrt((\x*\x+1)/14)});
      \draw[thick] plot[domain=-2.75:2.75,samples=120]
      (\x,{-sqrt((\x*\x+1)/14)}); \foreach \sgn in {1,-1} { \draw[thick]
      plot[domain=1:2.75,samples=65] (\x,{\sgn*sqrt((\x*\x-1)/14)});
      \draw[thick] plot[domain=-2.75:-1,samples=65]
      (\x,{\sgn*sqrt((\x*\x-1)/14)}); }
    \end{scope}
    \draw[-stealth] (-2.9,0) -- (2.9,0) node[right] {$x-a$};
    \draw[-stealth] (0,-0.98) -- (0,0.98) node[above] {$y-b$}; \fill (0,0)
    circle (1.1pt); \node[below left] at (0,0) {$q$}; \draw[very
    thick,fill=white] (0.34,-0.06) rectangle (0.58,0.18); \node[above] at
    (0.2,0.56) {covered square}; \draw[-stealth,thin] (0.28,0.47) --
    (0.44,0.21); \draw[very thick,densely dashed] (1.28,0.04) rectangle
    (1.52,0.28); \node[above] at (1.90,0.69) {test fails};
    \draw[-stealth,thin] (1.87,0.60) -- (1.46,0.28); \node[anchor=south]
    at (-1.90,0.78) {$N(\rho-q)=-1$}; \draw[-stealth,thin] (-1.95,0.83) --
    (-2.15,0.64); \node[anchor=west] at (1.75,-0.16) {$N(\rho-q)=1$};
    \draw[-stealth,thin] (2.07,-0.26) -- (2.10,-0.50);
  \end{tikzpicture}
  \caption{The hyperbolic region of \S\ref{sec:squares}, drawn in
  coefficient coordinates relative to a quotient $q = a + b\sqrt{14}$ for
  the illustrative threshold $c = 1$. The gray region is
  $|N(\rho - q)| < 1$, bounded by the level curves $N(\rho - q) = \pm 1$
  with their dashed asymptotes. A closed square wholly inside the gray
  region satisfies $|N(\rho - q)| < 1$ at every point. The covering test
  of \S\ref{sec:test} is this containment with the threshold $(8/9)^{h}$,
  $h = \ve(q)$, in place of $1$, and formula~\eqref{eq:cellmax} decides it
  exactly. The dashed square meets the boundary, so this quotient does not
  cover it at threshold $1$, whether or not another quotient does.}
  \label{fig:bands}
\end{figure}

\subsection{The test at the trial weight \texorpdfstring{$9/4$}{9/4}} \label{sec:test}
Two choices must be made before the construction begins. First, to obtain
finitely many tests, we consider only normalized remainders of valuation at most some
bound. We take the bound $4$, the least bound for which the calculation of
\S\ref{sec:weighted} succeeds. Proposition~\ref{prop:valuationfour} shows that
no smaller bound can succeed, because some classes have no reducing
normalized remainder of valuation at most $3$ at the weight $T$. With this bound we
work modulo $\pi^{5}R$, which is why $\Lambda = 8\Z \times 4\Z$. For a
class of nonnegative valuation we choose a representative $\xi$ with
$\ve(\xi) \geq 5$, as in Lemma~\ref{lem:periodic}(b).
By~\eqref{eq:fixedval}, the valuation of a normalized remainder $\xi - q$ with
$\ve(q) \leq 4$ then depends only on $q$, so the same test can be used at
every point of a square. Second, we choose a trial weight. A covered
square gives reducing normalized remainders at every weight up to the trial weight
(Lemma~\ref{lem:covered} below), so the trial weight must be at least the
endpoint that the calculation will find, and a smaller trial weight covers
more squares. Since the lower endpoint $\sqrt{5}$ is known, a simple
rational number just above $\sqrt{5}$ is a natural choice, and we take
$9/4$, the first multiple of $1/4$ above $\sqrt{5}$. The inequality
$T < 9/4$ that this choice requires is verified in~\eqref{eq:Tbounds}. If
the calculation had produced a number $T \geq 9/4$, we would have repeated
it with a larger trial weight. Since $2/(9/4) = 8/9$, the corresponding
norm bounds are the rational numbers $(8/9)^{h}$, so the tests require
only integer arithmetic.

A tested quotient $q$ of~\eqref{eq:qbox} is used only if $q \neq 0$ and
$h = \ve(q) \leq 4$, and then the test is
\begin{equation} \label{eq:rationaltest}
  9^{h}
  B_{L}(i,j;q) < 8^{h}L^{2}.
\end{equation}
By~\eqref{eq:cellmax}, this means that $|N(\rho - q)| < (8/9)^{h}$ at
every point $\rho$ of the square. A square is \emph{covered} if one of its
tested quotients satisfies the test, and \emph{uncovered} otherwise.

\begin{lemma} \label{lem:covered}
  Suppose that $C_{L}(i,j)$ is covered by the tested quotient $q$ with
  $h = \ve(q) \leq 4$. If $\xi \in K$ has valuation at least $5$ and its
  coefficient point lies in $C_{L}(i,j)$, then $\ve(\xi - q) = h$ and
  $\phi_{t}(\xi - q) < 1$ for every $t$ with $0 < t \leq 9/4$.
\end{lemma}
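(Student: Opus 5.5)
The proof combines three ingredients: the valuation identity~\eqref{eq:fixedval}, the exact maximum~\eqref{eq:cellmax}, and a monotonicity argument in $t$ as in Proposition~\ref{prop:shape}(c).

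First, the valuation. Since $\ve(\xi) \geq 5$ and $q \in R$ has $h = \ve(q) \leq 4$, Lemma~\ref{lem:periodic}(b) gives $\ve(\xi - q) = \ve(q) = h$ directly. Here $q \neq 0$, as required for a used tested quotient, so $h$ is a finite integer with $0 \leq h \leq 4$.

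Next, the norm bound. Let $\rho$ be the coefficient point of $\xi$, which lies in $C_{L}(i,j)$. Then the point $\rho - q$ is the coefficient point of $\xi - q$, so $N(\xi - q) = N(\rho - q)$. By~\eqref{eq:cellmax},
\[
  |N(\xi - q)| \leq \frac{B_{L}(i,j;q)}{L^{2}}.
\]
The covering test~\eqref{eq:rationaltest}, divided by $9^{h}L^{2}$, makes the right side strictly less than $(8/9)^{h}$.

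Finally, the weights. For $0 < t \leq 9/4$ we have $t/2 \leq 9/8$, and since $h \geq 0$ this gives $(t/2)^{h} \leq (9/8)^{h}$. Hence
\[
  \phi_{t}(\xi - q) = |N(\xi - q)|\left(\frac{t}{2}\right)^{h}
  < \left(\frac{8}{9}\right)^{h}\left(\frac{9}{8}\right)^{h} = 1.
\]

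No step is a real obstacle. The one point to check is that~\eqref{eq:cellmax} applies with the closed square, so that a point on the boundary of $C_{L}(i,j)$ is covered as well. It does, because the maximum in~\eqref{eq:cellmax} is taken over the closed square.
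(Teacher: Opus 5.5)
Your proof is correct and follows the paper's argument: \eqref{eq:cellmax} combined with the covering test \eqref{eq:rationaltest} gives $|N(\xi - q)| < (8/9)^{h}$, and \eqref{eq:fixedval} gives $\ve(\xi - q) = h$. The bound $(t/2)^{h} \leq (9/8)^{h}$ for $0 < t \leq 9/4$ then yields $\phi_{t}(\xi - q) < 1$.
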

\begin{proof}
  By~\eqref{eq:cellmax} and~\eqref{eq:rationaltest},
  $|N(\xi - q)| < (8/9)^{h}$, and~\eqref{eq:fixedval} gives
  $\ve(\xi - q) = h$. Hence, for $0 < t \leq 9/4$,
  \[
    \phi_{t}(\xi - q) = |N(\xi - q)|\left(\frac{t}{2}\right)^{h}
    < \left(\frac{8}{9}\right)^{h}\left(\frac{9}{8}\right)^{h} = 1.
  \]
  This proves the lemma.
\end{proof}

It is convenient to describe the points that the tests can never remove by
one set. Let
\begin{equation} \label{eq:X}
  \mathcal{X} = \Bigl\{\rho + \Lambda \in \R^{2}/\Lambda \colon
  |N(\rho - q)| \geq \Bigl(\frac{8}{9}\Bigr)^{\ve(q)}
  \text{ for every } q \in R \setminus \pi^{5}R\Bigr\}.
\end{equation}
Since $\pi^{5}R$ is the set of elements of $R$ of valuation at least $5$,
the condition is imposed exactly for the quotients $q$ with
$\ve(q) \leq 4$. The set is well defined, because replacing $\rho$ by
$\rho + \lambda$ with $\lambda \in \Lambda$, the coefficient point of
$\gamma \in \pi^{5}R$, and $q$ by $q + \gamma$ changes neither
$N(\rho - q)$ nor $\ve(q)$. The set $\mathcal{X}$ is of interest for two
reasons.

\begin{lemma} \label{lem:X}
  \mbox{}
  \begin{enumerate}
    \item[(a)] Multiplication by $\varepsilon$, negation, conjugation, and
      the reflection $(x,y) \mapsto (8 - x,y)$ map $\mathcal{X}$ onto
      itself.
    \item[(b)] At every resolution, no point of $\mathcal{X}$ lies in a
      covered square.
    \item[(c)] If $z \in K$ has no normalized remainder of absolute norm
      less than $1$, then the point of $z$ in $\R^{2}/\Lambda$ lies in
      $\mathcal{X}$.
    \item[(d)] Let $0 < t \leq 9/4$, let $\xi \in K$ satisfy
      $\ve(\xi) \geq 5$, and suppose that no normalized remainder $w$ of
      $\xi$ with $\ve(w) \leq 4$ satisfies $\phi_{t}(w) < 1$. Then the
      point of $\xi$ in $\R^{2}/\Lambda$ lies in $\mathcal{X}$.
  \end{enumerate}
\end{lemma}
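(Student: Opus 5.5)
Parts (a) through (d) each reduce to the defining inequality of~\eqref{eq:X}, so I will treat them one at a time.

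For (a), I compute how each map acts on the inequality $|N(\rho - q)| \geq (8/9)^{\ve(q)}$ for $q \in R \setminus \pi^{5}R$.
\begin{itemize}
\item Multiplication by $\varepsilon$: since $\varepsilon\rho - \varepsilon q = \varepsilon(\rho - q)$ and $|N(\varepsilon)| = 1$, we get $|N(\varepsilon\rho - \varepsilon q)| = |N(\rho - q)|$. Also $\ve(\varepsilon q) = \ve(q)$, and $q \mapsto \varepsilon q$ is a bijection of $R \setminus \pi^{5}R$. So every condition imposed on $\varepsilon\rho$, namely with quotient $q' = \varepsilon q$, is the condition on $\rho$ with quotient $q$.
\item Negation: use $q \mapsto -q$.
\item Conjugation: use $q \mapsto \bar{q}$, noting $N(\bar{\rho} - \bar{q}) = N(\rho - q)$ and $\ve(\bar{q}) = \ve(q)$. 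The latter holds by~\eqref{eq:valuation}, or because $\bar{\mathfrak{p}} = \mathfrak{p}$.
\item Reflection: it is the composition of negation and conjugation followed by a translation by $\Lambda$, and $\mathcal{X}$ is already known to be well defined modulo $\Lambda$.
\end{itemize}
Each map is an involution or invertible and preserves $\mathcal{X}$, so it maps $\mathcal{X}$ onto itself.

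For (b), suppose the square $C_{L}(i,j)$ is covered by the tested quotient $q$, with $q \neq 0$ and $h = \ve(q) \leq 4$. Then $q \notin \pi^{5}R$, since $\ve(q) \leq 4$. By~\eqref{eq:cellmax} and~\eqref{eq:rationaltest}, every point $\rho$ of the closed square satisfies $|N(\rho - q)| < (8/9)^{h}$. A point of $\mathcal{X}$ lying in the square has a representative $\rho$ in it, and this inequality violates the defining condition of $\mathcal{X}$. Hence no point of $\mathcal{X}$ lies in a covered square.

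For (c), let $\rho$ be the coefficient point of $z$, so that $N(\rho - q) = N(z - q)$. The hypothesis gives $|N(z - q)| \geq 1 \geq (8/9)^{\ve(q)}$ for every $q \in R$, since $\ve(q) \geq 0$. In particular the inequality holds for every $q \in R \setminus \pi^{5}R$, so the point of $z$ lies in $\mathcal{X}$.

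For (d), take $q \in R \setminus \pi^{5}R$, so $h = \ve(q) \leq 4$. By~\eqref{eq:fixedval}, $\ve(\xi - q) = h \leq 4$. The hypothesis then gives
\[
\phi_{t}(\xi - q) = |N(\xi - q)|(t/2)^{h} \geq 1,
\]
hence $|N(\xi - q)| \geq (2/t)^{h} \geq (8/9)^{h}$, using $t \leq 9/4$ and $h \geq 0$. This is exactly the defining inequality, so the point of $\xi$ lies in $\mathcal{X}$.

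None of these steps is a real obstacle. The one place needing care is in (a): checking that the quotient set $R \setminus \pi^{5}R$ is preserved by each map, along with $\ve$. This is what the valuation lemma supplies.
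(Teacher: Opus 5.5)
Your proposal is correct and follows the paper's proof essentially step for step. In (a) you use the same bijections $q \mapsto \varepsilon q$, $-q$, $\bar{q}$ of $R \setminus \pi^{5}R$, which preserve $|N(\rho - q)|$ and $\ve(q)$, and you obtain the reflection as a composition of these maps, while (b)--(d) are the same direct applications of \eqref{eq:cellmax}, \eqref{eq:rationaltest}, and \eqref{eq:fixedval} to the defining inequality of $\mathcal{X}$.
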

\begin{proof}
  (a) For $\rho \in \R^{2}$ and $q \in R$ we have
  $N(\varepsilon\rho - \varepsilon q) = N(\rho - q)$,
  $N(-\rho - (-q)) = N(\rho - q)$, and
  $N(\bar{\rho} - \bar{q}) = N(\rho - q)$, where $\bar{\rho} = (x,-y)$,
  and $\ve(\varepsilon q) = \ve(-q)
  = \ve(\bar{q}) = \ve(q)$. As $q$ runs over $R \setminus \pi^{5}R$,
  so do $\varepsilon q$, $-q$, and $\bar{q}$. Hence the first three maps
  preserve $\mathcal{X}$, and so does their composition.

  (b) If the square $C_{L}(i,j)$ is covered by the tested quotient $q$
  with $h = \ve(q) \leq 4$, then~\eqref{eq:cellmax}
  and~\eqref{eq:rationaltest} give $|N(\rho - q)| < (8/9)^{h}$ for every
  point $\rho$ of the square, so no such point lies in $\mathcal{X}$.

  (c) Let $\rho$ be the coefficient point of $z$. For $q \in R$, the
  element $z - q$ is a normalized remainder of $z$ and
  $N(\rho - q) = N(z - q)$, so $|N(\rho - q)| \geq 1 \geq (8/9)^{\ve(q)}$.

  (d) Let $\rho$ be the coefficient point of $\xi$, and let $q \in R$ with
  $h = \ve(q) \leq 4$. By~\eqref{eq:fixedval}, $\ve(\xi - q) = h$, and by
  hypothesis $\phi_{t}(\xi - q) = |N(\xi - q)|(t/2)^{h} \geq 1$. Since
  $t/2 \leq 9/8$, this gives
  $|N(\rho - q)| = |N(\xi - q)| \geq (8/9)^{h}$. This proves the lemma.
\end{proof}

By (a), the orbit of a point of $\mathcal{X}$ lies in $\mathcal{X}$, and
by (b) it never visits a covered square. Part (c) will give
Proposition~\ref{prop:ordinary}, and part (d) will give division at the
weight $T$, from the same finite calculation.

\subsection{The points of the classes \texorpdfstring{$R$}{R} and \texorpdfstring{$p + R$}{p+R}} \label{sec:Fsec}
Some points can never be removed by a covering test. Let
\begin{equation} \label{eq:centers}
  \mathcal{F} = \{0 + \Lambda\} \cup
  \{(a + 1/2,\,b + 1/2) + \Lambda \colon a,b \in \Z,\ 0 \leq a < 8,\
  0 \leq b < 4\}.
\end{equation}
The point $0$ is the coefficient point of the element $0$ of $\pi^{5}R$,
and the point $(a + 1/2,b + 1/2)$ is the coefficient point of the element
$(A + B\sqrt{14})/2$ of $p + R$ with $A = 2a + 1$ and $B = 2b + 1$. Thus
the point $0$ represents the class $R$, and the $32$ points with
half-integral coordinates represent the class $p + R$. Multiplication by
$\varepsilon$ fixes $0$, and by Lemma~\ref{lem:pclass} it maps the class
$p + R$ to itself, so it permutes the $32$ points with half-integral
coordinates. Since $\varepsilon^{2} - 1 = 448 + 120\sqrt{14}$ has
valuation $7$ and every element of $p + R$ has valuation $-2$,
multiplication by $\varepsilon^{2}$ fixes each of these points.
Multiplication by $\varepsilon$ itself fixes none of them, because
$\varepsilon - 1 = 14 + 4\sqrt{14}$ has valuation $2$, so that
$(\varepsilon - 1)c$ has valuation $0$ for every $c \in p + R$ and does
not lie in $\pi^{5}R$. Thus they form sixteen orbits of period $2$. In the
closure of $D$, the point $0$ has the four representatives $(0,0)$,
$(8,0)$, $(0,4)$, and $(8,4)$, and every other point of $\mathcal{F}$ has
exactly one representative. In this paper a representative of a point of
$\mathcal{F}$ always means one of these representatives in the closure of
$D$.

\begin{lemma} \label{lem:exceptional}
  At every resolution, no square incident to $\mathcal{F}$ is covered. If
  $L \geq 3$, then every closed square of side $1/L$ contains at most one
  representative of a point of $\mathcal{F}$.
\end{lemma}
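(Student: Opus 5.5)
The plan has two parts: show that covering tests can never succeed at a point of $\mathcal{F}$, and then separate the representatives of $\mathcal{F}$ by a distance argument.

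\emph{First assertion.} Let $S = C_{L}(i,j)$ be incident to a point of $\mathcal{F}$, so $S$ contains a representative $\rho_{0}$ of that point. Suppose $S$ were covered by a tested quotient $q \neq 0$ with $h = \ve(q) \leq 4$. Then $|N(\rho_{0} - q)| < (8/9)^{h} \leq 1$.

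If $\rho_{0}$ represents $0$, it is one of the four corners $(0,0),(8,0),(0,4),(8,4)$. Each is the coefficient point of an element $\gamma \in \pi^{5}R$. Then $\gamma - q$ is a nonzero element of $R$, since $\ve(q) \leq 4 < 5 \leq \ve(\gamma)$. Its norm is therefore a nonzero integer, so $|N(\rho_{0} - q)| \geq 1$, a contradiction.

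If $\rho_{0} = (a + 1/2, b + 1/2)$, it is the coefficient point of an element $c \in p + R$. Then $c - q \in p + R$, so Lemma~\ref{lem:pclass} gives $|N(\rho_{0} - q)| = |N(c - q)| \geq 5/4 > 1$, again a contradiction.

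I will point out that this is really Lemma~\ref{lem:X}(b) together with the observation that $\mathcal{F} \subseteq \mathcal{X}$. It is cleaner to argue directly as above, because that handles every representative in the closure of $D$, not only representatives in $D$.

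\emph{Second assertion.} The representatives in question lie in the closure of $D$. They are the corners $(0,0),(8,0),(0,4),(8,4)$ and the points $(a + 1/2, b + 1/2)$ with $0 \leq a < 8$ and $0 \leq b < 4$. Any two distinct points of this set differ in some coordinate by at least $1/2$:
\begin{itemize}
  \item two distinct half-integral points differ by a nonzero integer in some coordinate;
  \item a corner and a half-integral point differ by at least $1/2$ in both coordinates;
  \item two distinct corners differ by $8$ or by $4$ in some coordinate.
\end{itemize}
So their sup-norm distance is at least $1/2$. A closed square of side $1/L$ has sup-norm diameter $1/L$, and for $L \geq 3$ we have $1/L \leq 1/3 < 1/2$. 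Hence the square contains at most one of these points.

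\emph{Main obstacle.} I expect the only delicate point to be bookkeeping. The phrase ``representative of a point of $\mathcal{F}$'' must be read according to the paper's convention, namely the representatives in the closure of $D$. Without that convention, a square near the boundary could meet several translates of $0$ outside the closure of $D$, and the second assertion would need a separate check.
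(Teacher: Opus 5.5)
Your proof is correct and follows the paper's argument essentially step for step. You use the same contradiction from $|N(c-q)| < (8/9)^{h} \leq 1$, split the same way into an element of $\pi^{5}R$ (where $\gamma - q$ is a nonzero element of $R$) and an element of $p + R$ (where Lemma~\ref{lem:pclass} gives absolute norm at least $5/4$), and you finish with the same observation that distinct representatives differ by at least $1/2$ in some coordinate.

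Your closing worry is unnecessary. All representatives of points of $\mathcal{F}$ in $\R^{2}$ form the set $\Lambda \cup \bigl((\Z + \frac{1}{2}) \times (\Z + \frac{1}{2})\bigr)$. Any two distinct points of this set also differ by at least $1/2$ in some coordinate, so the second assertion holds without the convention about the closure of $D$.
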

\begin{proof}
  Suppose, by way of contradiction, that a square incident to
  $\mathcal{F}$ is covered by a tested quotient $q$ with
  $h = \ve(q) \leq 4$. The closed square contains a representative $c$ of
  a point of $\mathcal{F}$, and by~\eqref{eq:cellmax}
  and~\eqref{eq:rationaltest}, $|N(c - q)| < (8/9)^{h} \leq 1$. If
  $c \in \Lambda$, then $c$ is the coefficient point of an element
  $\gamma$ of $\pi^{5}R$, and $\gamma - q$ is a nonzero element of $R$,
  because $\ve(q) \leq 4 < \ve(\gamma)$. So $|N(\gamma - q)| \geq 1$, a
  contradiction. Otherwise $c$ is the coefficient point of an element of
  $p + R$, and so is $c - q$. Then Lemma~\ref{lem:pclass} gives
  $|N(c - q)| \geq 5/4$, again a contradiction. Finally, two distinct
  representatives of points of $\mathcal{F}$ differ by at least $1/2$ in
  one of their coordinates, so a closed square of side $1/L < 1/2$
  contains at most one of them. This proves the lemma.
\end{proof}

\subsection{The graph} \label{sec:thegraph}
The covered squares alone do not suffice, and we also record the possible
transitions between uncovered squares. We fix a resolution $L$ and a set
$\mathcal{S}$ of uncovered squares $C_{L}(i,j)$ at this resolution. In
\S\ref{sec:closedwalks}, $\mathcal{S}$ will be the set of all uncovered
squares at the resolution $2^{10}$, and later the set of those among them
that lie on closed walks. The squares of $\mathcal{S}$ are the vertices of
a directed graph whose edges are defined as follows. Let
\begin{equation} \label{eq:offsets}
  \begin{split}
    \mathcal{O} = \{(e_{1},e_{2}) \in \Z^{2} \colon
     & -1 \leq e_{1} \leq 71,\ -1 \leq e_{2} \leq 19, \\
     & -15 \leq 15e_{1} - 56e_{2} \leq 57,\ -15 \leq -4e_{1} + 15e_{2} \leq 5\}.
  \end{split}
\end{equation}
This set has $97$ elements. Its bounds come from the matrices
\[
  \begin{pmatrix}
    15 & 56 \\ 4 & 15\end{pmatrix}
  \quad\text{and}\quad
  \begin{pmatrix}
    15 & -56 \\ -4 & 15\end{pmatrix}
\]
of multiplication by $\varepsilon$ and by $\varepsilon^{-1}$ on
coefficient points. The proof of Lemma~\ref{lem:edges} below shows that
the offsets that occur satisfy $0 \leq e_{1} \leq 70 = 15 + 56 - 1$,
$0 \leq e_{2} \leq 18 = 4 + 15 - 1$,
$-15 < 15e_{1} - 56e_{2} < 57 = 1 + 56$, and
$-15 < -4e_{1} + 15e_{2} < 5 = 1 + 4$. In~\eqref{eq:offsets} we have
enlarged each of these ranges by one at each end, which can only add
edges. For a vertex $(i_{0},j_{0})$ and $(e_{1},e_{2}) \in \mathcal{O}$,
let $(X,Y) = \varepsilon(i_{0},j_{0}) + (e_{1},e_{2})$, and let
$(i_{1},j_{1})$ be the unique integer point of $LD$ that is congruent to
$(X,Y)$ modulo $L\Lambda$. If $(i_{1},j_{1})$ is a vertex, we include an
edge from $(i_{0},j_{0})$ to $(i_{1},j_{1})$, and we label it by the
element $\ell$ of $K$ whose coefficient point is
$((X,Y) - (i_{1},j_{1}))/L$. This coefficient point lies in $\Lambda$, so
$\ell \in \pi^{5}R$. Two vertices may be joined by several edges with
different labels.

A \emph{directed walk} is a finite sequence of vertices in which each
vertex is joined to the next one by an edge, together with a choice of
such an edge for each pair of consecutive vertices. Its \emph{length} is
the number of these edges, and its \emph{labels} are their labels, in
order. It is \emph{closed} if it has positive length and its last vertex
is its first. Vertices and edges may be repeated. A \emph{directed cycle}
is a closed walk in which no vertex is repeated except for the first
vertex at the end. A \emph{strongly connected component} of the graph is a
maximal set of vertices any two of which can be reached from each other by
directed walks. For brevity we call these sets \emph{components}. A vertex
lies on a closed walk if and only if its component has more than one
vertex or the vertex has a \emph{loop}, an edge from the vertex to itself.
We \emph{retain} the vertices that lie on closed walks.

The next lemma shows that the graph records every transition that the
orbit of a point can make (see Figure~\ref{fig:edges}).

\begin{lemma} \label{lem:edges}
  Let $\rho_{0}$ and $\rho_{1}$ be the representatives in $D$ of a point
  of $\R^{2}/\Lambda$ and of its product with $\varepsilon$, and let
  $C_{L}(i_{0},j_{0})$ and $C_{L}(i_{1},j_{1})$ be the squares to which
  they are assigned. Then there is $(e_{1},e_{2}) \in \mathcal{O}$ such
  that $(X,Y) = \varepsilon(i_{0},j_{0}) + (e_{1},e_{2})$ is congruent to
  $(i_{1},j_{1})$ modulo $L\Lambda$, and
  \[
    \rho_{1} = \varepsilon\rho_{0} - \frac{(X,Y) - (i_{1},j_{1})}{L}.
  \]
  In particular, if both squares are vertices of the graph at resolution
  $L$, then they are joined by an edge whose label $\ell$ satisfies
  \[
    (\rho_{1})_{u} = \varepsilon(\rho_{0})_{u} - \ell,\qquad
    (\rho_{1})_{s} = \varepsilon^{-1}(\rho_{0})_{s} - \bar{\ell}.
  \]
\end{lemma}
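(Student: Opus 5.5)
The plan is to write $\rho_{0} = (i_{0}/L + \delta_{1},\, j_{0}/L + \delta_{2})$ with $0 \leq \delta_{1},\delta_{2} < 1/L$, since $\rho_{0}$ is assigned to $C_{L}(i_{0},j_{0})$. Then we compute $L\varepsilon\rho_{0} = \varepsilon(i_{0},j_{0}) + L\varepsilon(\delta_{1},\delta_{2})$. Here $L\varepsilon(\delta_{1},\delta_{2}) = (15u + 56v,\, 4u + 15v)$ with $u = L\delta_{1}$ and $v = L\delta_{2}$ in $[0,1)$. Since $\rho_{1} \equiv \varepsilon\rho_{0}$ modulo $\Lambda$, the point $L\rho_{1}$ equals $L\varepsilon\rho_{0} - \lambda$ for some $\lambda \in L\Lambda$. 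Set $(i_{1},j_{1}) = (\lfloor L(\rho_{1})_{1}\rfloor, \lfloor L(\rho_{1})_{2}\rfloor)$, which is an integer point of $LD$. Define
\[
  (e_{1},e_{2}) = (i_{1},j_{1}) + \lambda - \varepsilon(i_{0},j_{0}),
\]
which is an integer vector because $\varepsilon$ has an integer matrix. With $(X,Y) = \varepsilon(i_{0},j_{0}) + (e_{1},e_{2}) = (i_{1},j_{1}) + \lambda$, the point $(X,Y)$ is congruent to $(i_{1},j_{1})$ modulo $L\Lambda$. Moreover $\rho_{1} = \varepsilon\rho_{0} - \lambda/L = \varepsilon\rho_{0} - ((X,Y) - (i_{1},j_{1}))/L$, which is the displayed formula.

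The main work is to check $(e_{1},e_{2}) \in \mathcal{O}$, and I would proceed in two steps.

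First, the identity $(e_{1},e_{2}) = (i_{1},j_{1}) - L\rho_{1} + L\varepsilon(\delta_{1},\delta_{2})$ gives $(e_{1},e_{2}) = -(\eta_{1},\eta_{2}) + (15u + 56v,\, 4u + 15v)$, where $\eta_{k} \in [0,1)$ are the fractional parts of the coordinates of $L\rho_{1}$. Hence $-1 < e_{1} < 71$ and $-1 < e_{2} < 19$. Since the $e_{k}$ are integers, this gives $0 \leq e_{1} \leq 70$ and $0 \leq e_{2} \leq 18$.

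Second, apply $\varepsilon^{-1}$, with matrix $\begin{pmatrix} 15 & -56\\ -4 & 15\end{pmatrix}$, to obtain $\varepsilon^{-1}(e_{1},e_{2}) = (u,v) - \varepsilon^{-1}(\eta_{1},\eta_{2})$. The vector $\varepsilon^{-1}(\eta_{1},\eta_{2}) = (15\eta_{1} - 56\eta_{2},\, -4\eta_{1} + 15\eta_{2})$ has first coordinate in $(-56,15)$ and second in $(-4,15)$. So $15e_{1} - 56e_{2} \in (-15,57)$ and $-4e_{1} + 15e_{2} \in (-15,5)$. These are the bounds stated before the lemma, and they lie inside those of \eqref{eq:offsets}. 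Hence $(e_{1},e_{2}) \in \mathcal{O}$.

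For the last assertion, suppose both squares are vertices. By the definition of the edges, the pair $(e_{1},e_{2})$ produces an edge from $(i_{0},j_{0})$ to $(i_{1},j_{1})$, the unique point of $LD$ congruent to $(X,Y)$. Its label $\ell$ has coefficient point $((X,Y) - (i_{1},j_{1}))/L = \lambda/L \in \Lambda$. Taking unstable and stable coordinates of $\rho_{1} = \varepsilon\rho_{0} - \lambda/L$ uses linearity of $\rho \mapsto \rho_{u}$ and $\rho \mapsto \rho_{s}$, together with $(\varepsilon\rho)_{u} = \varepsilon\rho_{u}$ and $(\varepsilon\rho)_{s} = \varepsilon^{-1}\rho_{s}$. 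Since the unstable and stable coordinates of the coefficient point of $\ell$ are $\ell$ and $\bar\ell$, this gives the two displayed identities.

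The only delicate step is the strict-inequality bookkeeping in the second range check, and it has slack of one at each end anyway.
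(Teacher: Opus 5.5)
Your proof is correct and is essentially the paper's argument. The paper defines $(e_{1},e_{2})$ directly as the integer part of $\varepsilon\theta$ with $\theta = L\rho_{0} - (i_{0},j_{0})$ and then identifies the floors of $L\rho_{1}$. You instead define $(e_{1},e_{2})$ from $\lambda$ and the target square, but your fractional parts of $L\rho_{1}$ are exactly the paper's $f_{1},f_{2}$, and the two range checks via $\varepsilon$ and $\varepsilon^{-1}$ are the same computation.
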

\begin{proof}
  Write $L\rho_{0} = (i_{0},j_{0}) + \theta$ with
  $\theta = (\theta_{1},\theta_{2})$ in $[0,1)^{2}$, and write
  $\varepsilon\theta = (e_{1} + f_{1},e_{2} + f_{2})$ with
  $e_{1},e_{2} \in \Z$ and $f_{1},f_{2} \in [0,1)$. Since
  $\varepsilon\theta = (15\theta_{1} + 56\theta_{2},\,4\theta_{1} + 15\theta_{2})$,
  we have $0 \leq e_{1} \leq 70$ and $0 \leq e_{2} \leq 18$. The inverse
  of multiplication by $\varepsilon$ is
  $(x,y) \mapsto (15x - 56y,\,-4x + 15y)$, so
  \[
    15e_{1} - 56e_{2} = \theta_{1} - 15f_{1} + 56f_{2},\qquad
    -4e_{1} + 15e_{2} = \theta_{2} + 4f_{1} - 15f_{2}.
  \]
  The first of these numbers lies in the open interval $(-15,57)$, and the
  second lies in $(-15,5)$. Hence $(e_{1},e_{2}) \in \mathcal{O}$. Since
  $\varepsilon(i_{0},j_{0})$ is an integer point, the floors of the
  coordinates of
  $L\varepsilon\rho_{0} = \varepsilon(i_{0},j_{0}) + \varepsilon\theta$
  are $(X,Y) = \varepsilon(i_{0},j_{0}) + (e_{1},e_{2})$. Now
  $\rho_{1} = \varepsilon\rho_{0} - \lambda$ for the unique
  $\lambda \in \Lambda$ with $\varepsilon\rho_{0} - \lambda \in D$. The
  floors of the coordinates of
  $L\rho_{1} = L\varepsilon\rho_{0} - L\lambda$ are $(X,Y) - L\lambda$,
  because $L\lambda$ is an integer point. Hence
  $(i_{1},j_{1}) = (X,Y) - L\lambda$ and
  $\lambda = ((X,Y) - (i_{1},j_{1}))/L$. The last assertion follows,
  because $\lambda$ is the coefficient point of $\ell$, whose stable and
  unstable coordinates are $\bar{\ell}$ and $\ell$. This proves the lemma.
\end{proof}

\begin{figure}[tbp]
  \centering
  \begin{tikzpicture}[x=0.3cm,y=0.3cm]
    % Exact coefficient-coordinate example (the axes are to scale):
    % rho_0=(3/2,1/10), epsilon*rho_0=(281/10,15/2),
    % lambda=(24,4), rho_1=(41/10,7/2).
    \foreach \a in {0,8,16,24} { \foreach \b in {0,4,8} {
    \draw[gray!60,densely dashed] (\a,\b) rectangle (\a+8,\b+4); } }
    \fill[gray!20] (0,0) rectangle (8,4); \draw[thick] (0,0) rectangle
    (8,4); \node at (7.2,2.7) {\small $D$}; \fill (1.5,0.1) circle
    (2.5pt); \node[above] at (1.5,0.2) {\small $\rho_{0}$}; \fill
    (28.1,7.5) circle (2.5pt); \node[right] at (28.1,7.5) {\small
    $\varepsilon\rho_{0}$}; \fill (4.1,3.5) circle (2.5pt); \node[below]
    at (4.1,3.45) {\small $\rho_{1}$}; \draw[-stealth,thin] (1.75,0.25)
    to[bend left = 12] (27.8,7.55); \node[above,fill = white,inner sep =
    1pt] at (12.0,8.8) {\small multiplication by $\varepsilon$};
    \draw[-stealth,thick,densely dashed] (27.8,7.45) -- (4.45,3.55);
    \node[below right,fill = white,inner sep = 1pt] at (17.0,4.6) {\small
    subtraction of $\lambda = (24,4)$};
  \end{tikzpicture}
  \caption{An example of Lemma~\ref{lem:edges} in coefficient coordinates.
  For $\rho_{0} = (3/2,1/10)$ we have
  $\varepsilon\rho_{0} = (281/10,15/2)$. Subtracting
  $\lambda = (24,4) \in \Lambda$ gives the representative
  $\rho_{1} = (41/10,7/2) \in D$. The edge label is the element
  $24 + 4\sqrt{14}$ of $\pi^{5}R$ with coefficient point $\lambda$.}
  \label{fig:edges}
\end{figure}

\subsection{The coding of an orbit by its labels} \label{sec:coding}
Let $\rho_{k}$ be the representatives in $D$ of the successive points of
an orbit. The points $\lambda_{k} = \varepsilon\rho_{k} - \rho_{k + 1}$
lie in $\Lambda$, and we call the elements $\ell_{k}$ of $\pi^{5}R$ with
coefficient points $\lambda_{k}$ the \emph{labels of the orbit}. By
Lemma~\ref{lem:edges}, when the squares of $\rho_{k}$ and $\rho_{k + 1}$
are both vertices of the graph, $\ell_{k}$ is the label of the edge that
the orbit follows from the first square to the second. We identify a label
$\ell \in \pi^{5}R$ with its coefficient point $\lambda \in \Lambda$
whenever it is convenient, so that $(176,44)$ and $176 + 44\sqrt{14}$
denote the same label. The next lemma shows that the labels determine the
orbit, and it gives the two coordinates of every point of the orbit as
series in the labels. In Ramsey's coding the same series express the
coordinates of a point in terms of its symbolic
coordinates~\cite[Section~4, equations~(3) and~(4),
pp.~127--128]{Ramsey2025}. The lemma will be used in two ways. In
\S\ref{sec:weighted} the finite calculation shows that the labels of
certain orbits are forced for several steps, and the series then bound the
coordinates of these orbits. Also, when the labels of a walk are the
differences $\varepsilon c_{k} - c_{k + 1}$ of a bounded sequence of
points $c_{k}$, the series telescope, and the orbit that follows the walk
is the sequence $c_{k}$. We apply this to the points of $\mathcal{F}$.

For a point $\lambda \in \Lambda$ with coordinates
$(\lambda_{1},\lambda_{2})$ we write
$\ell = \lambda_{1} + \lambda_{2}\sqrt{14}$ for the element of $\pi^{5}R$
with coefficient point $\lambda$, so that $\lambda_{u} = \ell$ and
$\lambda_{s} = \bar{\ell}$. More generally, for every point
$c = (c_{1},c_{2}) \in \R^{2}$ we have $c_{u} = c_{1} + c_{2}\sqrt{14}$
and $c_{s} = c_{1} - c_{2}\sqrt{14}$, and
$(\varepsilon c)_{u} = \varepsilon c_{u}$ and
$(\varepsilon c)_{s} = \varepsilon^{-1}c_{s}$.

\begin{lemma} \label{lem:coding}
  \mbox{}
  \begin{enumerate}
    \item[(a)] Let $(\rho_{k})_{k \in \Z}$ be a bounded sequence of points
      of $\R^{2}$, and let $(\lambda_{k})_{k \in \Z}$ be a bounded
      sequence of points of $\R^{2}$ such that
      $\rho_{k + 1} = \varepsilon\rho_{k} - \lambda_{k}$ for every
      $k \in \Z$. Then, for every $k \in \Z$,
      \begin{equation} \label{eq:series}
        (\rho_{k})_{u}
        = \sum_{j = 0}^{\infty}\varepsilon^{-j - 1}(\lambda_{k + j})_{u},
        \qquad
        (\rho_{k})_{s} = -\sum_{j = 1}^{\infty}\varepsilon^{1 - j}(\lambda_{k - j})_{s}.
      \end{equation}
      In particular the sequence $(\lambda_{k})$ determines the sequence
      $(\rho_{k})$.
    \item[(b)] Conversely, let $(\lambda_{k})_{k \in \Z}$ be a bounded
      sequence of points of $\R^{2}$. Then the series in~\eqref{eq:series}
      converge, and the points $\rho_{k}$ that they define form a bounded
      sequence with $\rho_{k + 1} = \varepsilon\rho_{k} - \lambda_{k}$ for
      every $k \in \Z$.
    \item[(c)] Let $(\rho_{k})$ and $(\lambda_{k})$ be as in (a), and
      suppose that there is a bounded sequence $(c_{k})_{k \in \Z}$ of
      points of $\R^{2}$ with
      $\lambda_{k} = \varepsilon c_{k} - c_{k + 1}$ for every $k \in \Z$.
      Then $\rho_{k} = c_{k}$ for every $k \in \Z$.
  \end{enumerate}
\end{lemma}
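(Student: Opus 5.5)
The plan is to work entirely with the two scalar coordinates. The map $\rho\mapsto(\rho_{s},\rho_{u})$ is a linear bijection of $\R^{2}$, so it is enough to control each coordinate separately. Taking the unstable and stable coordinates of the recursion $\rho_{k+1} = \varepsilon\rho_{k} - \lambda_{k}$ gives the scalar recursions
\[
  (\rho_{k+1})_{u} = \varepsilon(\rho_{k})_{u} - (\lambda_{k})_{u},\qquad
  (\rho_{k+1})_{s} = \varepsilon^{-1}(\rho_{k})_{s} - (\lambda_{k})_{s}.
\]
By~\eqref{eq:coordbound}, boundedness of a sequence of points is equivalent to boundedness of both coordinate sequences. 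I will therefore prove (a) by iterating the first recursion forward and the second backward, and then discard the boundary term using boundedness.

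For (a), fix $k$ and $n \geq 1$. Solving the unstable recursion for $(\rho_{k})_{u}$ and iterating $n$ times gives
\[
  (\rho_{k})_{u} = \sum_{j=0}^{n-1}\varepsilon^{-j-1}(\lambda_{k+j})_{u} + \varepsilon^{-n}(\rho_{k+n})_{u}.
\]
Since $(\rho_{k+n})_{u}$ is bounded and $\varepsilon > 1$, the last term tends to $0$ as $n\to\infty$, which gives the first series. For the stable coordinate, I rewrite the recursion at index $k-1$ as $(\rho_{k})_{s} = \varepsilon^{-1}(\rho_{k-1})_{s} - (\lambda_{k-1})_{s}$ and iterate backward:
\[
  (\rho_{k})_{s} = -\sum_{j=1}^{n}\varepsilon^{1-j}(\lambda_{k-j})_{s} + \varepsilon^{-n}(\rho_{k-n})_{s}.
\]
Letting $n\to\infty$ gives the second series. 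Because both coordinates of $\rho_{k}$ are determined by $(\lambda_{k})$, the sequence $(\lambda_{k})$ determines $(\rho_{k})$.

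For (b), let $M$ bound $|(\lambda_{k})_{u}|$ and $|(\lambda_{k})_{s}|$. Both series are dominated by geometric series with ratio $\varepsilon^{-1}$, so they converge absolutely. The resulting coordinates are bounded by $M/(\varepsilon-1)$ and $M\varepsilon/(\varepsilon-1)$, uniformly in $k$. I then define $\rho_{k}$ as the unique point with these coordinates; the sequence is bounded by~\eqref{eq:coordbound} applied through the inverse linear map, or directly by $|x|,|y| \leq C(|\rho_{u}|+|\rho_{s}|)$. To get the recursion, I reindex the series. Multiplying the series for $(\rho_{k})_{u}$ by $\varepsilon$ and subtracting $(\lambda_{k})_{u}$ gives the series for $(\rho_{k+1})_{u}$. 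Likewise, $\varepsilon^{-1}(\rho_{k})_{s} - (\lambda_{k})_{s}$ equals the series for $(\rho_{k+1})_{s}$, since the new $j=1$ term is exactly $-(\lambda_{k})_{s}$. Equality of both coordinates gives $\rho_{k+1} = \varepsilon\rho_{k} - \lambda_{k}$.

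For (c), the sequence $(c_{k})$ is bounded and satisfies $c_{k+1} = \varepsilon c_{k} - \lambda_{k}$ by hypothesis. So $(c_{k})$ and $(\rho_{k})$ satisfy the hypotheses of (a) with the same labels, and the uniqueness in (a) gives $\rho_{k} = c_{k}$. Alternatively, the difference $d_{k} = \rho_{k} - c_{k}$ is bounded and satisfies $d_{k+1} = \varepsilon d_{k}$. Its unstable coordinate therefore grows like $\varepsilon^{n}$ unless it is zero, and its stable coordinate grows like $\varepsilon^{n}$ backward unless it is zero. Hence $d_{k}=0$.

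No step is a real obstacle. The only point needing care is the direction of iteration: the unstable coordinate must be expanded forward and the stable coordinate backward, so that the boundary terms carry the factor $\varepsilon^{-n}$ and vanish. Getting the index shifts right in the stable series, which starts at $j=1$, is the place to check.
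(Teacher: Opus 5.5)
Your proposal is correct and follows the paper's proof essentially step for step. You iterate the unstable recursion forward and the stable recursion backward, and the boundary terms $\varepsilon^{-n}(\rho_{k+n})_{u}$ and $\varepsilon^{-n}(\rho_{k-n})_{s}$ vanish by boundedness; you reindex the series for (b); and you use the uniqueness from (a) for (c). Your alternative argument for (c), that a bounded solution of $d_{k+1} = \varepsilon d_{k}$ must vanish, is also valid. The paper instead remarks that the series telescope.
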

\begin{proof}
  (a) Taking unstable coordinates in
  $\rho_{k + 1} = \varepsilon\rho_{k} - \lambda_{k}$ gives
  $(\rho_{k})_{u} = \varepsilon^{-1}((\rho_{k + 1})_{u}
  + (\lambda_{k})_{u})$, and $n$ applications of this identity give
  \[
    (\rho_{k})_{u} = \sum_{j = 0}^{n - 1}\varepsilon^{-j - 1}(\lambda_{k + j})_{u}
    + \varepsilon^{-n}(\rho_{k + n})_{u}.
  \]
  By~\eqref{eq:coordbound}, the coordinates $(\rho_{k + n})_{u}$ are
  bounded, and $\varepsilon > 1$, so the last term tends to $0$ as
  $n \to \infty$. The terms of the series are bounded by a constant times
  $\varepsilon^{-j - 1}$, so the series converges, and this proves the
  first formula. Taking stable coordinates gives
  $(\rho_{k})_{s} = \varepsilon^{-1}(\rho_{k - 1})_{s}
  - (\lambda_{k - 1})_{s}$, and $n$ applications give
  \[
    (\rho_{k})_{s} = -\sum_{j = 1}^{n}\varepsilon^{1 - j}(\lambda_{k - j})_{s}
    + \varepsilon^{-n}(\rho_{k - n})_{s}.
  \]
  Letting $n \to \infty$ proves the second formula. A point of $\R^{2}$ is
  determined by its two coordinates, because $x = (\rho_{u} + \rho_{s})/2$
  and $y = (\rho_{u} - \rho_{s})/(2\sqrt{14})$ for $\rho = (x,y)$. Hence
  the labels determine the sequence $(\rho_{k})$.

  (b) The series converge because their terms are bounded by a constant
  times $\varepsilon^{-j - 1}$ and $\varepsilon^{1 - j}$, and the same
  bound shows that the points $\rho_{k}$ are bounded. Multiplying the
  first series by $\varepsilon$ and removing the term with $j = 0$ gives
  $\varepsilon(\rho_{k})_{u} - (\lambda_{k})_{u} = (\rho_{k + 1})_{u}$,
  and multiplying the second series by $\varepsilon^{-1}$ and adding the
  term $-(\lambda_{k})_{s}$ gives $\varepsilon^{-1}(\rho_{k})_{s} - (\lambda_{k})_{s}
  = (\rho_{k + 1})_{s}$. These are the
  two coordinates of the identity
  $\rho_{k + 1} = \varepsilon\rho_{k} - \lambda_{k}$.

  (c) The sequence $(c_{k})$ is bounded and satisfies
  $c_{k + 1} = \varepsilon c_{k} - \lambda_{k}$ for every $k$. By (a), the
  coordinates of $c_{k}$ are given by the series~\eqref{eq:series}, and so
  are the coordinates of $\rho_{k}$. Hence $\rho_{k} = c_{k}$. One can
  also see this directly, because the series telescope:
  $\sum_{j \geq 0}\varepsilon^{-j - 1}(\varepsilon(c_{k + j})_{u}
  - (c_{k + j + 1})_{u}) = (c_{k})_{u}$. This proves the lemma.
\end{proof}

In the situation of Lemma~\ref{lem:edges}, the points $\lambda_{k}$ lie in
$\Lambda$ and $(\lambda_{k})_{u} = \ell_{k}$ and
$(\lambda_{k})_{s} = \bar{\ell}_{k}$, where $\ell_{k}$ is the label of the
$k$th edge. The labels of the edges of a finite graph form a finite set,
so the sequence $(\lambda_{k})$ is bounded, and the representatives
$\rho_{k} \in D$ are bounded. Thus the series~\eqref{eq:series} become
\begin{equation} \label{eq:labelseries}
  (\rho_{k})_{u}
  = \sum_{j = 0}^{\infty}\varepsilon^{-j - 1}\ell_{k + j},
  \qquad
  (\rho_{k})_{s} = -\sum_{j = 1}^{\infty}\varepsilon^{1 - j}\bar{\ell}_{k - j}.
\end{equation}
The unstable coordinate of a point is determined by the labels of the walk
after it, and the stable coordinate by the labels before it. When the
labels $\ell_{k},\ldots,\ell_{k + n - 1}$ are known and
$(\rho_{k + n})_{u}$ is bounded below, the identity
$(\rho_{k})_{u} = \sum_{j = 0}^{n - 1}\varepsilon^{-j - 1}\ell_{k + j}
+ \varepsilon^{-n}(\rho_{k + n})_{u}$ from the proof of the lemma bounds
$(\rho_{k})_{u}$ from below, and similarly the labels
$\ell_{k - n},\ldots,\ell_{k - 1}$ and an upper bound for
$(\rho_{k - n})_{s}$ bound $(\rho_{k})_{s}$ from above. This is how the
labels found by the finite calculation will give the number $T$
(Figure~\ref{fig:series}).

\begin{figure}[!htbp]
  \centering
  \begin{tikzpicture}[x=1cm,y=1cm,
  every node/.style={font=\footnotesize}]
    \draw[-stealth,gray!75] (-6.8,0) -- (6.8,0) node[right] {time $k$};
    \foreach \x/\lab in {-5.6/{k-3},-3.9/{k-2},-2.2/{k-1},
    0/{k},2.2/{k+1},3.9/{k+2},5.6/{k+3}} { \fill (\x,0) circle (1.6pt);
    \node[below=3pt] at (\x,0) {$\rho_{\lab}$}; } \draw[very thick] (0,0)
    circle (4pt); \foreach \a/\b/\t in {-5.6/-3.9/{k-3},-3.9/-2.2/{k-2},
    -2.2/0/{k-1},0/2.2/{k}, 2.2/3.9/{k+1},3.9/5.6/{k+2}} { \draw[-stealth]
    (\a+0.13,0.08) -- (\b-0.15,0.08); \node[above] at ({(\a+\b)/2},0.18)
    {$\ell_{\t}$}; }
    \draw[decorate,decoration={brace,amplitude=5pt},gray!75] (-6,1.04) --
    (-0.13,1.04); \draw[decorate,decoration={brace,amplitude=5pt},gray!75]
    (0.13,1.04) -- (6,1.04); \node at (-3.15,1.55) {past labels determine
    $(\rho_k)_s$}; \node at (3.15,1.55) {future labels determine
    $(\rho_k)_u$}; \node[align=center,text width=6.4cm] at (-3.2,-1.55)
    {$\displaystyle (\rho_k)_s=
        -\sum_{j=1}^{\infty}\varepsilon^{1-j}\bar\ell_{k-j}$}; \node[align=center,text width=6.4cm] at
    (3.2,-1.55) {$\displaystyle (\rho_k)_u=
        \sum_{j=0}^{\infty}\varepsilon^{-j-1}\ell_{k+j}$};
  \end{tikzpicture}
  \caption{The two-sided reconstruction of an orbit from its labels, the
  lattice translations of \S\ref{sec:coding} (Lemma~\ref{lem:coding}). The
  labels preceding $\rho_{k}$ determine its stable coordinate, and the
  labels following $\rho_{k}$ determine its unstable coordinate. Both
  series converge because $\varepsilon > 1$. Thus forced finite strings of
  labels, together with bounds for their remaining tails, give the two
  inequalities in Lemma~\ref{lem:cornerbound}.} \label{fig:series}
\end{figure}

\subsection{Periodic orbits and closed walks} \label{sec:closedwalks}
We now fix the resolution and the vertex set that we use. Let
$L = 2^{10}$, and test every square of $D$. There are
$32 \cdot 2^{20} = 2^{25}$ of them, about $3.4 \cdot 10^{7}$, and a
computer tests them in a few seconds. Let $\mathcal{S}$ be the set of
uncovered squares, and let $\mathcal{R}$ be the set of retained vertices
of the graph on $\mathcal{S}$. The \emph{final graph} is the graph on
$\mathcal{R}$, with all the edges of \S\ref{sec:thegraph} between its
vertices. Only the retained squares can matter for periodic orbits, as the
next lemma shows. The resolution $2^{10}$ is the least power of $2$ at
which the final graph has the shape described in
Proposition~\ref{prop:finite}. At the resolutions $2^{6}$, $2^{7}$, and
$2^{8}$, all retained squares other than the four corner squares form a
single component. At $2^{9}$ the final graph has ten components, one of
four vertices, eight of eight vertices, and one of $424$ vertices that
contains squares on both sides of the line $y = 2$, and at $2^{10}$ the
last of these splits into the two components of
Proposition~\ref{prop:finite}(1).

\begin{lemma} \label{lem:walks}
  Let $\rho + \Lambda$ be a periodic point, and suppose that no point of
  its orbit lies in a covered square. Then the squares assigned to the
  points of the orbit belong to $\mathcal{R}$, the itinerary of the orbit
  is periodic, and the squares of one period of the itinerary, with the
  edges of Lemma~\ref{lem:edges} between consecutive squares, form a
  closed walk in the final graph, all of whose vertices lie in one
  component of the final graph. In particular, this holds for every
  periodic point of $\mathcal{X}$.
\end{lemma}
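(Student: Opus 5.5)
Write $\rho_{k}$ for the representative in $D$ of $\varepsilon^{k}\rho + \Lambda$, and $S_{k}$ for the square to which it is assigned. Let $P \geq 1$ satisfy $\varepsilon^{P}\rho - \rho \in \Lambda$.

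\textbf{Periodicity of the itinerary.} The point $\varepsilon^{k+P}\rho + \Lambda$ equals $\varepsilon^{k}\rho + \Lambda$. Since each point has exactly one representative in $D$, we get $\rho_{k+P} = \rho_{k}$ and hence $S_{k+P} = S_{k}$. So the itinerary is periodic, and so is the sequence of labels $\ell_{k}$ defined by $\lambda_{k} = \varepsilon\rho_{k} - \rho_{k+1}$.

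\textbf{The squares are vertices of the final graph.} By hypothesis no $\rho_{k}$ lies in a covered square. Hence each $S_{k}$, which contains $\rho_{k}$, is uncovered and lies in $\mathcal{S}$. By Lemma~\ref{lem:edges}, for each $k$ there is an edge of the graph on $\mathcal{S}$ from $S_{k}$ to $S_{k+1}$, with label $\ell_{k}$. Consider the sequence $S_{0}, S_{1}, \ldots, S_{P}$ with these edges. It is a walk of positive length $P$ whose last vertex $S_{P}$ equals its first $S_{0}$, so it is a closed walk in the graph on $\mathcal{S}$. Every $S_{k}$ lies on it, so every $S_{k}$ is retained and belongs to $\mathcal{R}$. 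The final graph contains all edges of \S\ref{sec:thegraph} between vertices of $\mathcal{R}$. Therefore the same walk is a closed walk in the final graph.

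\textbf{One component.} Any two vertices $S_{a}$ and $S_{b}$ on the closed walk can be reached from each other along it. Hence they lie in the same strongly connected component of the final graph. The only subtlety is that the components of the final graph are computed on $\mathcal{R}$ rather than on $\mathcal{S}$. This causes no problem, because the connecting walks use only vertices of the closed walk, and these lie in $\mathcal{R}$.

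\textbf{Periodic points of $\mathcal{X}$.} Let $\rho + \Lambda$ be a periodic point of $\mathcal{X}$. By Lemma~\ref{lem:X}(a), its whole orbit lies in $\mathcal{X}$. By Lemma~\ref{lem:X}(b), no point of $\mathcal{X}$ lies in a covered square. So the hypothesis of the lemma holds, and the conclusion follows.

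There is no real obstacle. The only points needing care are the uniqueness of representatives in $D$, which gives the exact periodicity of the itinerary, and the observation that passing from $\mathcal{S}$ to $\mathcal{R}$ removes no vertex or edge used by the walk.
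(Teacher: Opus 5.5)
Your proposal is correct and follows essentially the same argument as the paper: uncovered assigned squares lie in $\mathcal{S}$, Lemma~\ref{lem:edges} supplies the edges, periodicity closes the walk so its vertices are retained, and mutual reachability along the closed walk puts them in one component, with the last assertion coming from Lemma~\ref{lem:X}(a) and (b). You add two details the paper leaves implicit, namely that uniqueness of representatives in $D$ makes the itinerary exactly periodic and that the connecting walks use only vertices of $\mathcal{R}$.
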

\begin{proof}
  The squares assigned to the points of the orbit are uncovered, because
  each of them contains a point of the orbit, so they belong to
  $\mathcal{S}$. By Lemma~\ref{lem:edges}, consecutive squares of the
  itinerary are joined by edges of the graph on $\mathcal{S}$. Since the
  point is periodic, the itinerary is periodic, and the squares of one
  period, with these edges, form a closed walk, so its vertices lie on a
  closed walk, and they belong to $\mathcal{R}$. The final graph contains
  all the edges between the vertices of $\mathcal{R}$, so this is a closed
  walk in the final graph. Its vertices can be reached from each other, so
  they lie in one component. The last assertion follows from
  Lemma~\ref{lem:X}(a) and (b). This proves the lemma.
\end{proof}

We say that the itinerary of such an orbit \emph{runs along} this closed
walk. The labels of the edges of the closed walk, repeated periodically,
are the labels of the orbit, and by Lemma~\ref{lem:coding} they determine
the representatives $\rho_{k}$ through the series~\eqref{eq:labelseries}.
In the next section we use the final graph to determine these labels, in
part, for the orbits that concern us.

\section{The critical orbit and the number \texorpdfstring{$T$}{T}} \label{sec:weighted}
In this section we carry out the finite calculation and examine the
final graph (Proposition~\ref{prop:finite}). We then show that every
periodic orbit of $\mathcal{X}$ that is not contained in $\mathcal{F}$
passes, after negation if necessary, through one of four squares, and that
the labels of such an orbit are forced for several steps on both sides
of the passage (Lemma~\ref{lem:passage}). The series of
Lemma~\ref{lem:coding} then bound the two coordinates of the orbit at the
passage by the coordinates of a single explicit orbit, which we call the
critical orbit (Lemmas~\ref{lem:minimum} and~\ref{lem:cornerbound}). The
product of the two bounds is the number $\nu$ of~\eqref{eq:nu}, and
$\nu$ determines $T$. Finally, in \S\ref{sec:endpoint} we prove division
at the weight $T$.

\subsection{The critical orbit} \label{sec:etadef}
The point $(5/2,5/2)$ of $\mathcal{F}$ is the coefficient point of
$(5 + 5\sqrt{14})/2 = p + 1 + 2\sqrt{14}$, and the point $(11/2,5/2)$ is
the coefficient point of $(11 + 5\sqrt{14})/2 = p + 4 + 2\sqrt{14}$, both
elements of the class $p + R$. A direct calculation gives
\begin{equation} \label{eq:twocyclepoints}
  \begin{aligned}
    \varepsilon\left(\frac{5}{2},\frac{5}{2}\right) - (176,44)
     & = \left(\frac{3}{2},\frac{7}{2}\right),  &
    \varepsilon\left(\frac{3}{2},\frac{7}{2}\right) - (216,56)
     & = \left(\frac{5}{2},\frac{5}{2}\right), \\
    \varepsilon\left(\frac{11}{2},\frac{5}{2}\right) - (216,56)
     & = \left(\frac{13}{2},\frac{7}{2}\right), &
    \varepsilon\left(\frac{13}{2},\frac{7}{2}\right) - (288,76)
     & = \left(\frac{11}{2},\frac{5}{2}\right),
  \end{aligned}
\end{equation}
so the orbits of $(5/2,5/2) + \Lambda$ and of $(11/2,5/2) + \Lambda$ have
period $2$. The labels of the two edges of the first orbit are
$176 + 44\sqrt{14}$ from $(5/2,5/2)$ and $216 + 56\sqrt{14}$ from
$(3/2,7/2)$, and the labels of the two edges of the second orbit are
$216 + 56\sqrt{14}$ from $(11/2,5/2)$ and $288 + 76\sqrt{14}$ from
$(13/2,7/2)$. All four labels lie in $\pi^{5}R$. By
Lemma~\ref{lem:coding}(a), applied to these two periodic sequences of
labels,
\begin{equation} \label{eq:twocycle}
  \begin{aligned}
    \frac{5 + 5\sqrt{14}}{2}
     & = \frac{\varepsilon(176 + 44\sqrt{14}) + (216 + 56\sqrt{14})}
         {\varepsilon^{2} - 1}, \\
    \frac{11 - 5\sqrt{14}}{2}
     & = -\frac{(288 - 76\sqrt{14}) + \varepsilon^{-1}(216 - 56\sqrt{14})}
          {1 - \varepsilon^{-2}}.
  \end{aligned}
\end{equation}
The number on the left of the first formula is the unstable coordinate of
$(5/2,5/2)$, and the number on the left of the second formula is the
stable coordinate of $(11/2,5/2)$. The formulas say how these coordinates
arise from the labels. The first formula is the sum of the
series~\eqref{eq:labelseries} for the labels
$176 + 44\sqrt{14},216 + 56\sqrt{14},176 + 44\sqrt{14},\ldots$ that follow
$(5/2,5/2)$, and the second formula is the sum of the series for the
labels $\ldots,216 + 56\sqrt{14},288 + 76\sqrt{14}$ that precede
$(11/2,5/2)$.

The finite calculation of \S\ref{sec:finitecalc} produces seven further
labels. We list them now, so that the critical orbit can be defined before
the calculation is described, and we explain after
Proposition~\ref{prop:finite} how the calculation finds them. Let
\begin{equation} \label{eq:labels}
  \begin{aligned}
    \ell_{-3} & = 200 + 52\sqrt{14}, &
    \ell_{-2} & = 304 + 80\sqrt{14}, &
    \ell_{-1} & = 216 + 56\sqrt{14}, &
    \ell_{0}  & = 280 + 72\sqrt{14}, \\
    \ell_{1}  & = 216 + 56\sqrt{14}, &
    \ell_{2}  & = 192 + 48\sqrt{14}, &
    \ell_{3}  & = 200 + 52\sqrt{14},
  \end{aligned}
\end{equation}
and for the remaining integers $k$ let
\[
  \ell_{k} =
  \begin{cases}
    176 + 44\sqrt{14} & \text{if $k \geq 4$ is even}, \\
    216 + 56\sqrt{14} & \text{if $k \geq 5$ is odd}, \\
    288 + 76\sqrt{14} & \text{if $k \leq -4$ is even}, \\
    216 + 56\sqrt{14} & \text{if $k \leq -5$ is odd}.
  \end{cases}
\]
Thus the sequence $(\ell_{k})_{k \in \Z}$ consists of the labels of the
period-$2$ orbit of $(11/2,5/2) + \Lambda$ for $k \leq -4$, the seven
labels~\eqref{eq:labels}, and the labels of the period-$2$ orbit of
$(5/2,5/2) + \Lambda$ for $k \geq 4$. All of these labels lie in
$\pi^{5}R$, and we write $\lambda_{k}$ for the coefficient point of
$\ell_{k}$. By Lemma~\ref{lem:coding}(b), there is a unique bounded
sequence $(\eta_{k})_{k \in \Z}$ of points of $\R^{2}$ with
$\eta_{k + 1} = \varepsilon\eta_{k} - \lambda_{k}$ for all $k$, and its
coordinates are given by the series~\eqref{eq:labelseries}. We call the
sequence $(\eta_{k})$ the \emph{critical orbit}. Its points
$\eta_{k} + \Lambda$ form the orbit of $\eta_{0} + \Lambda$ in
$\R^{2}/\Lambda$, and we use the same name for that orbit. We show below
that every $\eta_{k}$ lies in $D$, so that $\eta_{k}$ is the
representative in $D$ of $\eta_{k} + \Lambda$, and then the identity
$\eta_{k + 1} = \varepsilon\eta_{k} - \lambda_{k}$ says that the labels of
the orbit of $\eta_{0} + \Lambda$, in the sense of \S\ref{sec:coding}, are
the $\ell_{k}$. Because the labels $\ell_{k}$ agree with those of the
period-$2$ orbit of $(5/2,5/2) + \Lambda$ from $k = 4$ on, we have
$(\eta_{k})_{u} = (5 + 5\sqrt{14})/2$ for even $k \geq 4$ and
$(\eta_{k})_{u} = (3 + 7\sqrt{14})/2$ for odd $k \geq 5$, by the first
series in~\eqref{eq:labelseries}. Similarly
$(\eta_{k})_{s} = (11 - 5\sqrt{14})/2$ for odd $k \leq -3$ and
$(\eta_{k})_{s} = (13 - 7\sqrt{14})/2$ for even $k \leq -4$. For
$k \geq 4$ the labels of $\eta_{k}$ and of the period-$2$ orbit agree, so
the recursion for the stable coordinates gives
$(\eta_{k + 1})_{s} - c'_{s} = \varepsilon^{-1}((\eta_{k})_{s} - c_{s})$,
where $c$ is $(5/2,5/2)$ for even $k$ and $(3/2,7/2)$ for odd $k$, and
$c'$ is the other of these two points. Exact computation in $K$ gives
$(\eta_{4})_{s} - c_{s} = 0.9989\ldots$ for $c = (5/2,5/2)$ and
$(\eta_{5})_{s} - c_{s} = 0.0333\ldots$ for $c = (3/2,7/2)$, and the
second number is $\varepsilon^{-1}$ times the first. Hence
\begin{equation} \label{eq:etabound}
  \lvert(\eta_{k})_{s} - c_{s}\rvert
  = \varepsilon^{-(k - 4)}\,\lvert(\eta_{4})_{s} - (5/2,5/2)_{s}\rvert
  < \varepsilon^{-(k - 4)}
  \qquad\text{for $k \geq 4$},
\end{equation}
where $c$ is $(5/2,5/2)$ for even $k$ and $(3/2,7/2)$ for odd $k$. Thus
the stable coordinates of $\eta_{k}$ and of the period-$2$ orbit differ by
an amount that tends to $0$, their unstable coordinates agree, and the
distance from $\eta_{k} + \Lambda$ to the period-$2$ orbit of
$(5/2,5/2) + \Lambda$ tends to $0$ as $k \to \infty$. In the same way,
with the unstable coordinates in place of the stable ones and the backward
recursion
$(\eta_{k - 1})_{u} - c'_{u} = \varepsilon^{-1}((\eta_{k})_{u} - c_{u})$
for $k \leq -3$, we have
$\lvert(\eta_{k})_{u} - c_{u}\rvert < \varepsilon^{-(-k - 3)}$ for
$k \leq -3$, where $c$ is $(11/2,5/2)$ for odd $k$ and $(13/2,7/2)$ for
even $k$, because $(\eta_{-3})_{u} - (11/2,5/2)_{u} = -0.9989\ldots$, and
the distance from $\eta_{k} + \Lambda$ to the period-$2$ orbit of
$(11/2,5/2) + \Lambda$ tends to $0$ as $k \to -\infty$. In the language of
the theory of dynamical systems, the critical orbit is a
\emph{heteroclinic orbit} in $\R^{2}/\Lambda$, an orbit that approaches
one periodic orbit in forward time and another periodic orbit in backward
time.

We now show that every $\eta_{k}$ lies in $D$. The coefficient point of a
point $\rho$ is
$((\rho_{u} + \rho_{s})/2,\,(\rho_{u} - \rho_{s})/(2\sqrt{14}))$, so a
change of one coordinate of $\rho$ by an amount $d$ changes each
coefficient of $\rho$ by less than $d$. For $k \geq 20$ and for
$k \leq -19$, the bounds above show that the coefficient point of
$\eta_{k}$ lies within $\varepsilon^{-16} < 10^{-23}$ of one of the four
points $(5/2,5/2)$, $(3/2,7/2)$, $(11/2,5/2)$, and $(13/2,7/2)$, each of
which has distance $1/2$ from the boundary of $D$, so $\eta_{k}$ lies in
the interior of $D$. For $-20 \leq k \leq 20$, the coefficient point of
$\eta_{k}$ lies in $D$ by exact comparison in $K$. The points
$\eta_{-5},\ldots,\eta_{5}$ are shown in Figure~\ref{fig:clusters}.

The series~\eqref{eq:labelseries} for $\eta_{0}$ can be summed. The labels
$\ell_{k}$ with $k \geq 4$ give $\varepsilon^{-4}(5 + 5\sqrt{14})/2$, and
the labels $\ell_{-k}$ with $k \geq 4$ give
$\varepsilon^{-3}(11 - 5\sqrt{14})/2$, so
\begin{equation} \label{eq:etacoords}
  \begin{aligned}
    (\eta_{0})_{u}
                   & = \varepsilon^{-1}\ell_{0} + \varepsilon^{-2}\ell_{1}
    + \varepsilon^{-3}\ell_{2} + \varepsilon^{-4}\ell_{3}
    + \varepsilon^{-4}\,\frac{5 + 5\sqrt{14}}{2}
    = \frac{-805515 + 215293\sqrt{14}}{2}, \\
    (\eta_{0})_{s} & = -\bar{\ell}_{-1} - \varepsilon^{-1}\bar{\ell}_{-2}
    - \varepsilon^{-2}\bar{\ell}_{-3} + \varepsilon^{-3}\,\frac{11 - 5\sqrt{14}}{2}
    = \frac{26893 - 7191\sqrt{14}}{2}.
  \end{aligned}
\end{equation}
The coefficient point of $\eta_{0}$ is $(6.0963\ldots,\,3.4010\ldots)$.
Let $q = 4 + 4\sqrt{14}$, and define
\begin{equation} \label{eq:UV}
  \begin{aligned}
    U & = q - (\eta_{0})_{u} = \frac{805523 - 215285\sqrt{14}}{2} = 0.1447\ldots, \\
    V & = (\eta_{0})_{s} - \bar{q} = \frac{26885 - 7183\sqrt{14}}{2} = 4.3375\ldots.
  \end{aligned}
\end{equation}
Thus $(V,-U)$ is the pair of stable and unstable coordinates of the
translated point $\eta_{0} - q$. The quotient $q$ has valuation $4$
by~\eqref{eq:valuation}, and it is the fixed point of the affine map
$x \mapsto \varepsilon x - \ell_{0}$, because
$\ell_{0} = 280 + 72\sqrt{14} = (\varepsilon - 1)q$. Exact comparison in
$K$ gives
\begin{equation} \label{eq:symmetry}
  (\eta_{0})_{s}
  = 8 - (\eta_{1})_{u},\qquad
  (\eta_{1})_{s} = 8 - (\eta_{0})_{u},\qquad V = \varepsilon U.
\end{equation}
The third identity follows from the first, because $q + \bar{q} = 8$ and
$(\eta_{1})_{u} = \varepsilon(\eta_{0})_{u}
- \ell_{0}$ give
\[
  V = 8 - (\eta_{1})_{u} - \bar{q} = q - \varepsilon(\eta_{0})_{u} + \ell_{0}
  = \varepsilon q - \varepsilon(\eta_{0})_{u} = \varepsilon U.
\]
Remark~\ref{rem:symmetry} explains the first two identities by a symmetry
of the critical orbit. We will see in Lemma~\ref{lem:cornerbound} that the
point $\eta_{0} - q$ is the corner of a rectangle that contains the
translated points $\rho - q$ of all the orbits that concern us.

\subsection{The finite calculation} \label{sec:finitecalc}
We carry out the construction of \S\ref{sec:closedwalks} at the resolution
$L = 2^{10}$. For a square $C_{L}(i,j)$ we write $(i,j)$ for the vertex,
and for a vertex $\sigma$ we write $-\sigma$ for the square obtained from
it by negation, which acts on the grid by
$(i,j) \mapsto (8L - 1 - i,\,4L - 1 - j)$.

\begin{proposition} \label{prop:finite}
  The construction at the resolution $L = 2^{10}$ has the following
  properties.
  \begin{enumerate}
    \item[(1)] The final graph has $432$ vertices and $2106$ edges. It has
      eleven components, each of which contains a closed walk. One of them
      consists of the four squares at the corners of $D$, which are
      incident to the point $0$ of $\mathcal{F}$, and eight of them
      consist of the eight squares incident to the two points of an orbit
      of period $2$ in $\mathcal{F}$, one orbit for each component. Thus
      every vertex of these nine components is a square incident to
      $\mathcal{F}$. The other two components have $182$ vertices each.
      Negation maps the final graph onto itself and exchanges these two
      components. We call the one whose vertices lie in $y \geq 2 + 1/L$
      the \emph{upper component}. The vertices of the other lie in
      $y \leq 2 - 1/L$.
    \item[(2)] The vertices of the upper component at distance less than
      $1/100$ from $\eta_{0}$ are the four squares $(6241 + i,\,3482)$
      with $0 \leq i \leq 3$, which we call the \emph{critical squares}.
      Every other vertex of the upper component is at distance greater
      than $1/5$ from $\eta_{0}$. Every point $\rho$ of a closed critical
      square satisfies $\rho_{u} < q$ and $\rho_{s} > \bar{q}$, where
      $q = 4 + 4\sqrt{14}$.
    \item[(3)] Every walk of length $4$ in the final graph that starts at
      a critical square has the labels
      $\ell_{0},\ell_{1},\ell_{2},\ell_{3}$ of~\eqref{eq:labels}, in this
      order, and every walk of length $3$ that ends at a critical square
      has the labels $\ell_{-3},\ell_{-2},\ell_{-1}$.
    \item[(4)] Every vertex of a closed walk in the final graph that
      passes through no critical square $\sigma$ and no square $-\sigma$
      is a square incident to $\mathcal{F}$, and every edge of such a
      walk, from a square that contains the representative $c$ of a point
      of $\mathcal{F}$ to a square that contains the representative $c'$,
      has the label whose coefficient point is $\varepsilon c - c'$.
    \item[(5)] If a square of the upper component contains a point $\rho$
      with $\rho_{u} < (5 + 5\sqrt{14})/2$, then every edge from this
      square has the label $176 + 44\sqrt{14}$, and every edge from the
      end of such an edge has the label $216 + 56\sqrt{14}$. If a square
      of the upper component contains a point $\rho$ with
      $\rho_{s} > (11 - 5\sqrt{14})/2$, then every edge into this square
      has the label $288 + 76\sqrt{14}$, and every edge into the start of
      such an edge has the label $216 + 56\sqrt{14}$.
  \end{enumerate}
\end{proposition}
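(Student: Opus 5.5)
This proposition is a statement about the output of a finite, exact computation, so the plan is to describe the algorithm precisely enough that each assertion becomes a checkable property of finite data, and to record which parts follow by reasoning rather than by enumeration.

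The first step runs the covering test. For each of the $2^{25}$ squares $C_{L}(i,j)$ with $L = 2^{10}$, and for each tested quotient $q$ of~\eqref{eq:qbox} with $q \neq 0$ and $\ve(q) \leq 4$, we evaluate the integer $B_{L}(i,j;q)$ of~\eqref{eq:cellbound} and the integer inequality~\eqref{eq:rationaltest}. This uses only integer arithmetic, so the set $\mathcal{S}$ of uncovered squares is determined exactly. We then form the edges of \S\ref{sec:thegraph}, for each vertex and each of the $97$ offsets in $\mathcal{O}$, recording targets and labels. Next we compute strongly connected components by Tarjan's algorithm and retain the vertices that lie on closed walks. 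The counts in (1) ($432$ vertices, $2106$ edges, eleven components of the stated sizes) are then read off.

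Several parts of (1) can be checked structurally rather than merely counted. The nine small components are identified by checking that each of their squares contains a representative of a point of $\mathcal{F}$. Lemma~\ref{lem:exceptional} guarantees that these squares are uncovered, so they must appear. That negation maps the graph onto itself follows from Lemma~\ref{lem:X}(a) together with the symmetry of the tested boxes under $(i,j) \mapsto (8L-1-i,4L-1-j)$. I would verify this symmetry directly on the computed edge list rather than argue it abstractly, since the boxes~\eqref{eq:qbox} are not literally symmetric. The separation $y \geq 2 + 1/L$ of the upper component is a check over its $182$ vertices.

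For (2), we compute distances from the squares to $\eta_{0}$ by exact comparison in $K$, using the closed form~\eqref{eq:etacoords}. Because $\rho_{u}$ and $\rho_{s}$ are linear in $\rho$, the inequalities $\rho_{u} < q$ and $\rho_{s} > \bar{q}$ on a closed square reduce to checking the corners: the maximum of $x + y\sqrt{14}$ and the minimum of $x - y\sqrt{14}$ over the square, each compared in $K$. For (3), the paths are enumerated: all length-$4$ walks out of the four critical squares and all length-$3$ walks into them. There are finitely many, since the outdegrees are bounded.

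Part (4) is the delicate one. A closed walk that avoids every critical square $\sigma$ and every $-\sigma$ lies in a single component, by Lemma~\ref{lem:walks}. The plan is to delete the eight squares $\pm\sigma$ from the final graph, recompute strongly connected components, and check two things. First, every vertex still on a closed walk is incident to $\mathcal{F}$. Second, every edge inside those residual components carries the label $\varepsilon c - c'$, which is an exact check on labels in $\Lambda$.

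Part (5) is a finite check over the vertices of the upper component. For the hypothesis $\rho_{u} < (5 + 5\sqrt{14})/2$, we compare the minimum of $x + y\sqrt{14}$ on the closed square with this number in $K$. We then inspect the labels of the outgoing edges and of the edges two steps out. The stable case is symmetric, using incoming edges.

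I expect (4) to be the main obstacle. It is the only assertion about all closed walks rather than about a bounded number of steps, and it depends on the deletion-and-recomputation reduction being valid. The reduction is correct because a closed walk avoiding a vertex set is a closed walk in the induced subgraph, and hence lies in a strongly connected component of that subgraph. Everything else is a bounded enumeration. The remaining risk is in trusting the implementation, which must use exact arithmetic throughout.
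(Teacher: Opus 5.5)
Your proposal is correct and follows the paper's proof essentially step for step: an exact integer covering test, then the edge construction and strongly connected components for (1), corner comparisons in $K$ for (2) and (5), enumeration of the short walks for (3), and deletion of the eight squares $\pm\sigma$ followed by recomputation of components for (4). Two side remarks need correcting, although your computational checks make them harmless. First, the boxes~\eqref{eq:qbox}, the bounds~\eqref{eq:cellbound}, and the offset set $\mathcal{O}$ are exactly symmetric under $(i,j) \mapsto (8L - 1 - i,\,4L - 1 - j)$, with $q \mapsto 8 + 4\sqrt{14} - q$ and $(e_{1},e_{2}) \mapsto (70 - e_{1},\,18 - e_{2})$. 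Second, Lemma~\ref{lem:exceptional} shows only that the squares incident to $\mathcal{F}$ are uncovered, not that they are retained or that they form the nine small components; that is part of what the computation establishes.
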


\begin{proof}
  This is a finite calculation, which we carried out on a computer in
  exact integer and rational arithmetic. The calculation tests all
  $2^{25}$ squares of $D$ with~\eqref{eq:rationaltest} and finds $527616$
  uncovered squares. The test is carried out column by column. Fix a unit
  square $[a_{0},a_{0} + 1] \times [b_{0},b_{0} + 1]$ with integers
  $0 \leq a_{0} < 8$ and $0 \leq b_{0} < 4$, a column of the squares
  $C_{L}(i,j)$ in it, with $i$ fixed, and a tested quotient $q$. All
  squares in this unit square have the same tested
  quotients~\eqref{eq:qbox}, and $Y^{2}$ is monotone on every interval
  $I_{2}$ of~\eqref{eq:cellbound} that occurs in the column, so the
  squares of the column that $q$ covers have consecutive indices $j$,
  which are found from the first and the last of them. As a check, each
  retained square and each square in a neighborhood of the critical
  squares was also tested one square at a time, directly
  from~\eqref{eq:rationaltest}, with the same result. The calculation then
  constructs the edges between the uncovered squares, retains the $432$
  squares that lie on closed walks, computes the components of the final
  graph, and verifies (1).

  For (2), we compute the distance from each closed square of the upper
  component to $\eta_{0}$, whose coordinates are the elements of $K$
  in~\eqref{eq:etacoords}, by exact comparison in $K$, and we compare the
  coordinates of the corners of the critical squares with $q$ and
  $\bar{q}$. Since $\rho_{u} = x + y\sqrt{14}$ increases with both
  coordinates of $\rho = (x,y)$, and $\rho_{s} = x - y\sqrt{14}$ increases
  with $x$ and decreases with $y$, it is enough to compare $\rho_{u}$ at
  the upper right corner of each critical square with $q$ and $\rho_{s}$
  at the upper left corner with $\bar{q}$. For (3), we list the labels of
  all walks of length $4$ from the critical squares and of all walks of
  length $3$ into them. For (4), we remove the four critical squares and
  their negatives, compute the components of the remaining graph, and
  examine the components that contain a closed walk. Every vertex of these
  components is incident to $\mathcal{F}$, and by
  Lemma~\ref{lem:exceptional} it contains exactly one representative $c$
  of a point of $\mathcal{F}$. We verify that every edge between two
  vertices of one of these components has the label with coefficient point
  $\varepsilon c - c'$. Since a closed walk that avoids the critical
  squares and their negatives lies in one of these components, this proves
  (4). For (5), we find the squares of the upper component whose lower
  left corner has unstable coordinate less than $(5 + 5\sqrt{14})/2$ and
  the squares whose lower right corner has stable coordinate greater than
  $(11 - 5\sqrt{14})/2$, and we examine the labels of the edges that leave
  or enter them. There are eleven squares of each kind. This proves the
  proposition.
\end{proof}

The two numbers in Proposition~\ref{prop:finite}(2) are chosen as follows.
The number $1/100$ isolates the four squares nearest to $\eta_{0}$, since
the next nearest vertex of the upper component is at distance
$0.2395\ldots$ from $\eta_{0}$. The number $1/5$ is a convenient bound
below this distance and below the distance $0.4036\ldots$ from $\eta_{0}$
to $(13/2,7/2)$, the nearest representative of a point of $\mathcal{F}$,
which is what the proof of Lemma~\ref{lem:passage} needs.

The labels~\eqref{eq:labels} were found as follows. The thirty-two points
of $\mathcal{F}$ with half-integral coordinates form sixteen orbits of
period $2$. The nine small components of the final graph consist of the
squares incident to the point $0$ and to eight of these sixteen orbits.
The remaining eight orbits are those of $(5/2,5/2) + \Lambda$,
$(7/2,5/2) + \Lambda$, $(9/2,5/2) + \Lambda$, $(11/2,5/2) + \Lambda$, and
their negatives, and the squares incident to them belong to the two large
components. In the upper component, the squares incident to $(11/2,5/2)$
or $(13/2,7/2)$ can be joined to the squares incident to $(5/2,5/2)$ or
$(3/2,7/2)$ by a walk. The shortest such walk has eleven edges, and its
labels are $\ell_{-5},\ldots,\ell_{5}$. That is, after two labels of the
period-$2$ orbit of $(11/2,5/2) + \Lambda$ and before two labels of the
period-$2$ orbit of $(5/2,5/2) + \Lambda$, the walk has the seven
labels~\eqref{eq:labels}. Thus the critical orbit is the orbit that leaves
the period-$2$ orbit of $(11/2,5/2) + \Lambda$ and arrives at the
period-$2$ orbit of $(5/2,5/2) + \Lambda$ along this walk. We do not claim
that it is the only orbit with this property, and the proof below does not
need this.

Figure~\ref{fig:uncovered} shows the squares that remain after the two
steps of the calculation. The covering test leaves $527616$ of the
$2^{25}$ squares of $D$ uncovered, and only $432$ of these are retained.
Apart from the four squares at the corners of $D$, which are incident to
the point $0$ of $\mathcal{F}$, the uncovered squares form thin regions
near the lines $y = b + 1/2$ with $0 \leq b \leq 3$, and every other point
of $\mathcal{F}$ lies in one of these regions. By
Lemma~\ref{lem:exceptional}, the squares incident to $\mathcal{F}$ remain
uncovered at every resolution. So the covering test alone can never finish
the calculation, and this is why we need the graph of \S\ref{sec:thegraph}
and the coding of Lemma~\ref{lem:coding}. Part (a) of the figure is
symmetric under negation, which exchanges the two large components. Part
(b) shows the strip $2.2 \leq y \leq 3.8$, which contains the upper
component. Its vertices form the small clusters of
Figure~\ref{fig:clusters}.

\begin{figure}[tbp]
  \centering
  \begin{tikzpicture}[x=1.85cm,y=1.85cm,
  every node/.style={font=\footnotesize}]
    % The fundamental domain D = [0,8) x [0,4) at the resolution 2^10. The
    % image has one pixel for each square of D. Light gray: the uncovered
    % squares. Black: the vertices of the two large components of the final
    % graph, drawn enlarged. Small circles: the points of F (the point 0 at
    % the four corners). The images window_uncovered_a.png and
    % window_uncovered_b.png are drawn by a separate program from
    % the output of the calculation.
    \node[anchor=south west,inner sep=0pt] at (0,0)
    {\includegraphics[width=14.8cm,height=7.4cm]{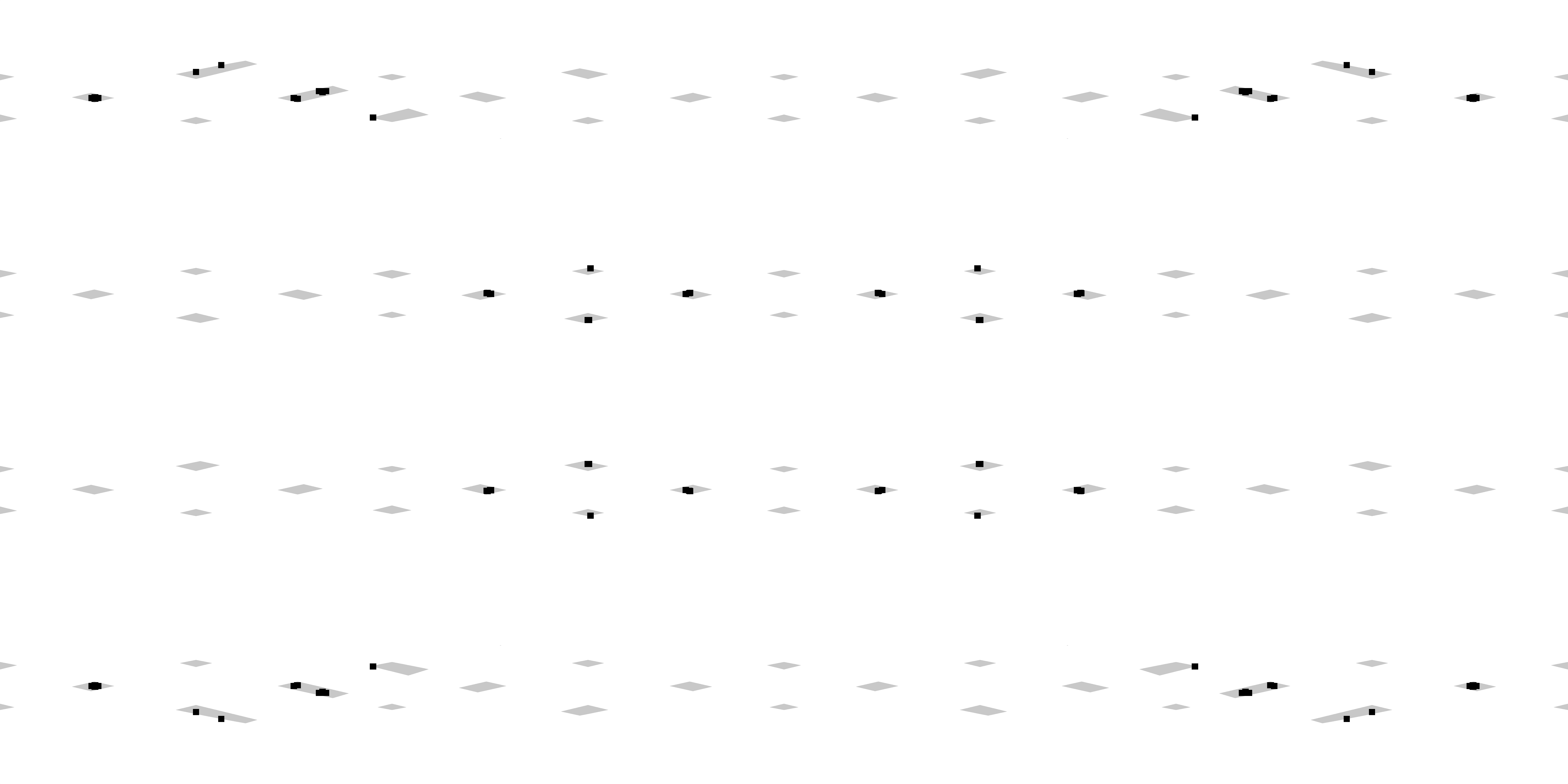}};
    \draw[thin] (0,0) rectangle (8,4); \draw[densely dotted] (0,2) --
    (8,2); \foreach \a in {0,...,7} { \foreach \b in {0,...,3} {
    \draw[thin] (\a+0.5,\b+0.5) circle (1.3pt); } } \foreach \a in {0,8} {
    \foreach \b in {0,4} { \draw[thin] (\a,\b) circle (1.3pt); } }
    \node[below] at (0,0) {$0$}; \node[below] at (4,0) {$4$}; \node[below]
    at (8,0) {$8$}; \node[left] at (0,2) {$2$}; \node[left] at (0,4)
    {$4$};
  \end{tikzpicture}

  (a)

  \vspace{2ex}
  \begin{tikzpicture}[x=1.85cm,y=4.4cm,
  every node/.style={font=\footnotesize}]
    % The strip 2.2 <= y <= 3.8 of D, with the vertical direction stretched
    % as in Figure (fig:clusters). Light gray: the uncovered squares. Black:
    % the vertices of the upper component of the final graph, drawn
    % enlarged. Small circles: the points of F.
    \begin{scope}
      \clip (0,2.2) rectangle (8,3.8); \node[anchor=south west,inner
      sep=0pt] at (0,2.1875)
      {\includegraphics[width=14.8cm,height=7.15cm]{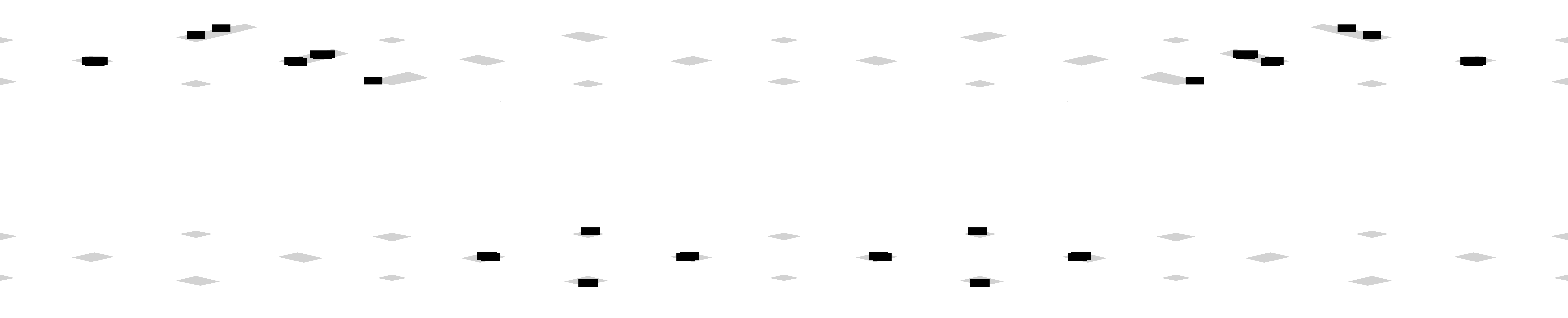}};
    \end{scope}
    \draw[thin] (0,2.2) rectangle (8,3.8); \draw[densely dotted] (4,2.2)
    -- (4,3.8); \foreach \a in {0,...,7} { \foreach \b in {2,3} {
    \draw[thin] (\a+0.5,\b+0.5) circle (1.3pt); } }
    % Enlarge the locations of the critical squares and their reflections.
    \draw[thick] (1.904,3.401) circle (0.064); \draw[thick] (6.096,3.401)
    circle (0.064); \node[below left] at (1.904,3.401) {$\eta_{1}$};
    \node[below right] at (6.096,3.401) {$\eta_{0}$};
    % Key in the empty space between the two bands of uncovered squares.
    \fill[white] (2.53,2.92) rectangle (5.47,3.08); \draw[gray!60,thin]
    (2.53,2.92) rectangle (6.2,3.08); \fill[gray!25] (2.64,2.969)
    rectangle (2.73,3.031); \node[anchor=west,font=\scriptsize] at
    (2.78,3) {uncovered}; \fill[black] (3.92,2.969) rectangle
    (4.01,3.031); \node[anchor=west,font=\scriptsize] at (4.06,3)
    {vertices of the final graph}; \node[left] at (0,2.2) {$2.2$};
    \node[left] at (0,3.8) {$3.8$}; \node[below] at (0,2.2) {$0$};
    \node[below] at (4,2.2) {$4$}; \node[below] at (8,2.2) {$8$};
  \end{tikzpicture}

  (b) \caption{(a) The fundamental domain $D$ at the resolution $2^{10}$.
  The $527616$ uncovered squares are light gray, the vertices of the two
  large components of the final graph are black, and the points of
  $\mathcal{F}$ are small circles (the point $0$ appears at the four
  corners). The vertices of the other nine components are squares incident
  to $\mathcal{F}$. (b) The strip $2.2 \leq y \leq 3.8$ of $D$, with the
  vertical direction stretched as in Figure~\ref{fig:clusters}. The
  uncovered squares are light gray, and the vertices of the upper
  component of the final graph are black. The circles surround the
  critical squares, near $\eta_{0}$, and their reflections, near
  $\eta_{1}$. In both parts the vertices are drawn enlarged so that they
  are visible.} \label{fig:uncovered}
\end{figure}

Figure~\ref{fig:clusters} shows the vertices of the upper component of the
final graph. They lie in twenty small clusters. Six clusters surround the
points $\eta_{-3}$, $\eta_{-1}$, $\eta_{0}$, $\eta_{1}$, $\eta_{2}$, and
$\eta_{4}$ of the critical orbit. Eight clusters contain the squares
incident to the eight points of $\mathcal{F}$ in the upper component, and
four of these, the clusters at $(1/2,7/2)$, $(3/2,7/2)$, $(13/2,7/2)$, and
$(15/2,7/2)$, also contain the squares around the nearby points
$\eta_{3}$, $\eta_{5}$, $\eta_{-4}$, and $\eta_{-2}$. Two clusters, of
thirteen squares each, lie near a second orbit, which leaves the
period-$2$ orbit of $(5/2,5/2) + \Lambda$ and arrives at the period-$2$
orbit of $(11/2,5/2) + \Lambda$. This second orbit is the orbit along
which the classes of \S\ref{sec:family} return from the first of these
period-$2$ orbits to the second, and we will meet it there as the orbit
through the homoclinic point of displacement $\bar{\pi}$. The remaining
four clusters, of one or two squares each, lie near the points $(1,3.63)$,
$(1.13,3.67)$, $(6.87,3.67)$, and $(7,3.63)$, far from $\mathcal{F}$. By
Proposition~\ref{prop:finite}(4), every closed walk through them passes
through the critical squares or their negatives. The proof of
Theorem~\ref{thm:main} uses only the properties listed in
Proposition~\ref{prop:finite}, and not this description of the clusters.

\begin{figure}[tbp]
  \centering
  \begin{tikzpicture}[x=1.85cm,y=4.4cm,
  every node/.style={font=\footnotesize}]
    % The part 2.2 <= y <= 3.8 of the fundamental domain D = [0,8) x [0,4),
    % which contains all vertices of the upper component. Black dots: the
    % points of F. Circles: the points eta_k of the critical orbit, at the
    % positions computed in Section (sec:etadef). Rectangles: the clusters
    % near the second orbit. Diamonds: the four small clusters far from F.
    \begin{scope}
      \clip (-0.05,2.2) rectangle (8.05,3.8); \draw[gray!50,very thin]
      (0,2) grid[step=0.5] (8,4);
    \end{scope}
    \draw[thin] (0,2.2) rectangle (8,3.8); \foreach \a in {0,...,7} {
    \foreach \b in {2,3} { \fill (\a+0.5,\b+0.5) circle (1.3pt); } }
    \draw[densely dotted] (4,2.2) -- (4,3.8);
    % The points eta_k, k = -4, ..., 5, of the critical orbit, and the
    % second orbit.
    \coordinate (gm4) at (6.4833,3.4955); \coordinate (gm3) at
    (5.0005,2.3665); \coordinate (gm2) at (7.5329,3.4999); \coordinate
    (gm1) at (4.9876,2.6300); \coordinate (g0) at (6.0964,3.4010);
    \coordinate (g1) at (1.9036,3.4010); \coordinate (g2) at
    (3.0124,2.6300); \coordinate (g3) at (0.4671,3.4999); \coordinate (g4)
    at (2.9995,2.3665); \coordinate (g5) at (1.5167,3.4955); \coordinate
    (h1) at (1.6450,3.5322); \coordinate (h2) at (6.3550,3.5322); \foreach
    \k in {gm4,gm3,gm2,gm1,g1,g2,g3,g4,g5} \draw[fill=white,thick] (\k)
    circle (2.2pt); \draw[fill=black,thick] (g0) circle (2.6pt); \foreach
    \k in {h1,h2} \draw[thick] ($(\k)+(-0.045,-0.019)$) rectangle
    ($(\k)+(0.045,0.019)$);
    % The four small clusters far from F.
    \foreach \p in
    {(1.0000,3.6333),(1.1284,3.6675),(6.8716,3.6675),(7.0000,3.6333)}
    \draw[thick] \p ++(0,0.014) -- ++(0.030,-0.014) -- ++(-0.030,-0.014)
    -- ++(-0.030,0.014) -- cycle; \node[below left] at (gm4)
    {$\eta_{-4}$}; \node[below] at (gm3) {$\eta_{-3}$}; \node[above] at
    (gm2) {$\eta_{-2}$}; \node[above left] at (gm1) {$\eta_{-1}$};
    \node[below left] at (g0) {$\eta_{0}$}; \node[below right] at (g1)
    {$\eta_{1}$}; \node[above right] at (g2) {$\eta_{2}$}; \node[above] at
    (g3) {$\eta_{3}$}; \node[below] at (g4) {$\eta_{4}$}; \node[below
    right] at (g5) {$\eta_{5}$}; \node[above left] at (2.5,2.5)
    {$(\frac{5}{2},\frac{5}{2})$}; \node[below left] at (1.46,3.49)
    {$(\frac{3}{2},\frac{7}{2})$}; \node[below right] at (5.5,2.5)
    {$(\frac{11}{2},\frac{5}{2})$}; \node[below right] at (6.54,3.49)
    {$(\frac{13}{2},\frac{7}{2})$}; \node[left] at (0,2.2) {$2.2$};
    \node[left] at (0,3.8) {$3.8$}; \node[below] at (0,2.2) {$0$};
    \node[below] at (4,2.2) {$4$}; \node[below] at (8,2.2) {$8$};
  \end{tikzpicture}
  \caption{The strip $2.2 \leq y \leq 3.8$ of $D$, with the vertical
  direction stretched, which contains all vertices of the upper component
  of the final graph. The dots are the points of $\mathcal{F}$. The
  circles mark the points $\eta_{-4},\ldots,\eta_{5}$ of the critical
  orbit, around which the vertices cluster. The black circle marks the
  critical squares, which surround $\eta_{0}$. The points $\eta_{k}$ for
  $k \geq 6$ and $k \leq -5$ are too close to the period-$2$ orbits of
  $(5/2,5/2) + \Lambda$ and $(11/2,5/2) + \Lambda$ to be drawn separately.
  The two rectangles mark the clusters near the second orbit of
  \S\ref{sec:family}, and the four diamonds mark the four small clusters
  far from $\mathcal{F}$.} \label{fig:clusters}
\end{figure}

\begin{remark} \label{rem:resolution}
  The resolution $L = 2^{10}$ was found by doubling $L$, starting from
  small values, until the final graph had the shape described in
  Proposition~\ref{prop:finite}, as explained in \S\ref{sec:closedwalks}.
  A larger resolution gives a final graph whose vertices lie in smaller
  clusters around the same points, and the properties of
  Proposition~\ref{prop:finite} persist, with the critical squares
  replaced by the squares around $\eta_{0}$ at the larger resolution. We
  verified this at the resolution $2^{11}$, where the final graph again
  has $432$ vertices, with $2014$ edges, and the two large components
  again have $182$ vertices each. A larger resolution costs only computing
  time, and it is not needed for the proof.
\end{remark}

\subsection{The necessary passage} \label{sec:passage}
\begin{lemma} \label{lem:passage}
  Let $\rho + \Lambda$ be a periodic point of $\mathcal{X}$ whose orbit is
  not contained in $\mathcal{F}$. Then the orbit of $\rho + \Lambda$ or
  the orbit of $-\rho + \Lambda$ has a point whose representative in $D$
  is assigned to a critical square.
\end{lemma}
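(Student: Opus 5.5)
We argue by contradiction, using Lemma~\ref{lem:walks} and Proposition~\ref{prop:finite}(4). Suppose that neither the orbit of $\rho + \Lambda$ nor that of $-\rho + \Lambda$ visits a critical square. By Lemma~\ref{lem:X}(a), negation preserves $\mathcal{X}$, and negation commutes with multiplication by $\varepsilon$. Negation maps the square of a point to the negated square, up to the half-open boundary convention; the grid map $(i,j)\mapsto(8L-1-i,4L-1-j)$ sends $C_L(i,j)$ onto the closed square containing $-\rho$. So the hypothesis says that the orbit of $\rho+\Lambda$ visits no critical square $\sigma$ and no square $-\sigma$.

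This boundary convention needs a small check. If $-\rho$ lies on a grid line, its assigned square may differ from the image square. We handle this case by noting that the critical squares and their negatives are at distance greater than $1/5$ from $\mathcal{F}$ and are interior to the upper and lower components. Alternatively, we can simply verify that no point of $\mathcal{X}$ lies on the boundary shared with a covered square, using closedness of the covering test (Lemma~\ref{lem:X}(b) applies to closed squares).

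Since $\rho+\Lambda$ is periodic and lies in $\mathcal{X}$, Lemma~\ref{lem:walks} shows that one period of its itinerary is a closed walk in the final graph. By assumption this walk avoids every critical square and every negative of one. Proposition~\ref{prop:finite}(4) then applies: each square $\sigma_k$ of the walk is incident to $\mathcal{F}$, so it contains a unique representative $c_k$ of a point of $\mathcal{F}$ by Lemma~\ref{lem:exceptional} (here $L=2^{10}\ge 3$). Moreover, each label satisfies $\lambda_k=\varepsilon c_k-c_{k+1}$. Extending periodically, both $(c_k)$ and the representatives $(\rho_k)$ in $D$ are bounded sequences. By Lemma~\ref{lem:edges}, $\rho_{k+1}=\varepsilon\rho_k-\lambda_k$. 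Lemma~\ref{lem:coding}(c) now gives $\rho_k=c_k$ for every $k$. Hence every point of the orbit lies in $\mathcal{F}$, contradicting the hypothesis.

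The main obstacle is making the identification of the labels of the orbit with the labels of the walk fully rigorous. Lemma~\ref{lem:edges} gives the label of the orbit's actual transition, and Proposition~\ref{prop:finite}(4) asserts that every edge of the walk has the $\mathcal{F}$-label. We therefore need the orbit's transition edge to be one of the edges of the final graph between these squares, and this is what Lemma~\ref{lem:walks} provides. A subsidiary point concerns the representative $c_k$ when $\sigma_k$ is a corner square incident to $0$. There we must pick the correct one of the four corners of $\overline{D}$, namely the one lying in $\sigma_k$; Lemma~\ref{lem:exceptional} ensures it is unique.
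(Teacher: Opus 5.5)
Your main case is the paper's argument: if the periodic itinerary avoids every critical square $\sigma$ and every $-\sigma$, then Proposition~\ref{prop:finite}(4), Lemma~\ref{lem:exceptional}, and Lemma~\ref{lem:coding}(c) give $\rho_{k} = c_{k}$, so the orbit lies in $\mathcal{F}$. The gap is in the reduction to this case. You need that if some $\rho_{k} = (x,y)$ is assigned to $-\sigma$, with $\sigma = (6241 + i,\,3482)$, then the orbit of $-\rho + \Lambda$ visits a critical square. The representative $(8 - x,\,4 - y)$ of $-\rho_{k} + \Lambda$ satisfies $6241 + i < L(8 - x) \leq 6242 + i$ and $3482 < L(4 - y) \leq 3483$. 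So it lies in the closed square $\sigma$, but it may lie on its right or upper edge. It is then assigned to $(6245,3482)$ or to a square in the row $j = 3483$, and these are not critical.

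You notice this, but neither remedy works. That the critical squares are far from $\mathcal{F}$, or ``interior'' to their components (a notion with no meaning for a set of vertices), says nothing about the neighboring square. Lemma~\ref{lem:X}(b) excludes the boundary case only if that neighboring square is covered. Nothing established in the paper says so, and it may be an uncovered square that is simply not retained. So ``simply verify'' appeals to a computation you do not have.

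The missing observation is that the square $\tau$ to which $(8 - x,4 - y)$ is assigned is automatically a vertex of the final graph. Indeed, $-\rho + \Lambda$ is a periodic point of $\mathcal{X}$ by Lemma~\ref{lem:X}(a), so Lemma~\ref{lem:walks} applies to its orbit. Since $\tau$ meets the closed square $\sigma$, every point of $\tau$ is within $1/100 + 2/L < 1/5$ of $\eta_{0}$. Moreover, $\eta_{0}$ is at distance greater than $1/5 + 1/L$ from every representative of a point of $\mathcal{F}$ and from the region $y \leq 2$. Hence $\tau$ lies neither in one of the nine small components nor in the lower component, so it lies in the upper component. The second clause of Proposition~\ref{prop:finite}(2) says every non-critical vertex of the upper component is at distance greater than $1/5$ from $\eta_{0}$, which forces $\tau$ to be a critical square. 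This is how the paper closes the argument, and with this step added your proof is complete.
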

\begin{proof}
  Let $\rho_{k}$ be the representatives in $D$ of the points of the orbit
  of $\rho + \Lambda$, and let $\lambda_{k} \in \Lambda$ be the
  coefficient points of its labels, so that
  $\rho_{k + 1} = \varepsilon\rho_{k} - \lambda_{k}$. By
  Lemma~\ref{lem:walks}, the itinerary of the orbit at the resolution
  $L = 2^{10}$ runs along a closed walk in the final graph, and by
  Lemma~\ref{lem:edges} the $\lambda_{k}$ are the coefficient points of
  the labels of its edges.

  Suppose first that the itinerary passes through no critical square
  $\sigma$ and no square $-\sigma$. By Proposition~\ref{prop:finite}(4),
  the square of $\rho_{k}$ contains the representative $c_{k}$ of a point
  of $\mathcal{F}$, and $\lambda_{k} = \varepsilon c_{k} - c_{k + 1}$ for
  every $k$. The points $c_{k}$ lie in the closure of $D$, so they are
  bounded, and the points $\rho_{k}$ lie in $D$. By
  Lemma~\ref{lem:coding}(c), $\rho_{k} = c_{k}$ for every $k$. Then the
  orbit of $\rho + \Lambda$ is contained in $\mathcal{F}$, contrary to the
  hypothesis. Hence the itinerary passes through a critical square
  $\sigma$ or through its negative $-\sigma$.

  If it passes through a critical square, we are done. Suppose that some
  $\rho_{k} = (x,y)$ is assigned to the square $-\sigma$, where
  $\sigma = (6241 + i,\,3482)$ is critical. Then $x < (8L - 6241 - i)/L$
  and $y < (4L - 3482)/L$, so the point $(8 - x,\,4 - y)$ lies in $D$, and
  it is a representative of $-\rho_{k} + \Lambda$. It lies in the closed
  box $[6241/L,\,6245/L] \times [3482/L,\,3483/L]$, which is the union of
  the four closed critical squares. By Proposition~\ref{prop:finite}(2),
  every point of this box is at distance less than $1/100 + 4/L$ from
  $\eta_{0}$. By Lemma~\ref{lem:X}(a), $-\rho + \Lambda$ is a periodic
  point of $\mathcal{X}$, so by Lemma~\ref{lem:walks} the square assigned
  to $(8 - x,4 - y)$ is a vertex of the final graph. It contains a point
  at distance less than $1/100 + 4/L < 1/5$ from $\eta_{0}$. The
  coefficient point $(6.0963\ldots,3.4010\ldots)$ of $\eta_{0}$ is at
  distance greater than $1/5 + 1/L$ from every representative of a point
  of $\mathcal{F}$, the nearest being $(13/2,7/2)$, and from the region
  $y \leq 2$, by exact comparison in $K$. Hence this vertex is neither a
  square incident to $\mathcal{F}$ nor a square of the lower large
  component, so it belongs to the upper component, and by
  Proposition~\ref{prop:finite}(2) it is a critical square. This proves
  the lemma.
\end{proof}

\subsection{The period-\texorpdfstring{$2$}{2} orbits are extremal} \label{sec:minimum}
The next lemma gives a lower bound for the unstable coordinates and an
upper bound for the stable coordinates of all points of the orbits that
pass through the upper component. Its mechanism is that the affine map
$x \mapsto \varepsilon^{2}x - \varepsilon(176 + 44\sqrt{14})
- (216 + 56\sqrt{14})$, which multiplication by $\varepsilon^{2}$ induces
on the unstable coordinates of the points near the period-$2$ orbit of
$(5/2,5/2) + \Lambda$, moves every point other than its fixed point
$(5 + 5\sqrt{14})/2$ away from the fixed point. Cavallar and
Lemmermeyer use the same mechanism~\cite[Proposition~4.2,
  p.~340]{CavallarLemmermeyer2000}. In their proposition a single map
$\xi \mapsto \varepsilon\xi - \ell$ sends every point of a set $S$ either
into $S$ or to a point $\rho$ with $|N(\rho - q)| < c$ for some
$q \in R$, where $c > 0$ is fixed. For a real quadratic field their
conclusion is that every point of $S$ whose orbit contains no such point
$\rho$ has the same unstable coordinate as the fixed point. Here two maps
alternate, and we obtain a one-sided bound instead.

\begin{lemma} \label{lem:minimum}
  Let $\rho + \Lambda$ be a periodic point of $\mathcal{X}$ whose orbit
  has an itinerary at the resolution $2^{10}$ that runs along a closed
  walk in the upper component, and let $\rho_{k}$ be the representatives
  in $D$ of the points of its orbit. Then
  \[
    (\rho_{k})_{u} \geq \frac{5 + 5\sqrt{14}}{2}\quad\text{and}\quad
    (\rho_{k})_{s} \leq \frac{11 - 5\sqrt{14}}{2}\qquad(k \in \Z).
  \]
\end{lemma}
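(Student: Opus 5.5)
The plan is to argue by contradiction, using Proposition~\ref{prop:finite}(5) to force labels and then the expanding affine map described before the lemma. Write $u^{*} = (5 + 5\sqrt{14})/2$, and set
\[
  F(x) = \varepsilon^{2}x - \varepsilon(176 + 44\sqrt{14}) - (216 + 56\sqrt{14}).
\]
By~\eqref{eq:twocycle}, $F(u^{*}) = u^{*}$, and $F(x) - u^{*} = \varepsilon^{2}(x - u^{*})$.

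First I treat the unstable bound. Suppose that some $k$ has $(\rho_{k})_{u} < u^{*}$. Since the orbit runs along a closed walk in the upper component, the square of $\rho_{k}$ is a vertex of that component and contains the point $\rho_{k}$. Proposition~\ref{prop:finite}(5) then forces the next two labels: $\ell_{k} = 176 + 44\sqrt{14}$ and $\ell_{k+1} = 216 + 56\sqrt{14}$. By Lemma~\ref{lem:edges}, this gives $(\rho_{k+2})_{u} = F((\rho_{k})_{u})$. Hence
\[
  (\rho_{k+2})_{u} - u^{*} = \varepsilon^{2}\bigl((\rho_{k})_{u} - u^{*}\bigr) < 0.
\]
The point $\rho_{k+2}$ again lies in a square of the upper component, because the walk stays in one component. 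So I can iterate, and for every $n$,
\[
  (\rho_{k+2n})_{u} - u^{*} = \varepsilon^{2n}\bigl((\rho_{k})_{u} - u^{*}\bigr).
\]
This tends to $-\infty$. That is impossible, since the $\rho_{j}$ lie in $D$ and so $|(\rho_{j})_{u}|$ is bounded by~\eqref{eq:coordbound}. Periodicity gives the same contradiction even more directly: taking $2n$ to be a multiple of the period, the left side must equal $(\rho_{k})_{u} - u^{*}$, which differs from $\varepsilon^{2n}((\rho_{k})_{u} - u^{*})$ unless $(\rho_{k})_{u} = u^{*}$. Either argument contradicts $(\rho_{k})_{u} < u^{*}$.

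The stable bound is symmetric and runs backward in time. Let $s^{*} = (11 - 5\sqrt{14})/2$, and suppose that $(\rho_{k})_{s} > s^{*}$. The second half of Proposition~\ref{prop:finite}(5) forces the two incoming labels: $\ell_{k-1} = 288 + 76\sqrt{14}$ and $\ell_{k-2} = 216 + 56\sqrt{14}$. Lemma~\ref{lem:edges} gives
\[
  (\rho_{k})_{s} = \varepsilon^{-1}(\rho_{k-1})_{s} - \bar{\ell}_{k-1},
\]
and the analogous relation one step earlier. From the second formula of~\eqref{eq:twocycle}, $s^{*}$ is the fixed point of the composite map $G$ from $(\rho_{k-2})_{s}$ to $(\rho_{k})_{s}$, whose multiplier is $\varepsilon^{-2}$. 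Inverting, $(\rho_{k-2})_{s} - s^{*} = \varepsilon^{2}\bigl((\rho_{k})_{s} - s^{*}\bigr) > 0$. The point $\rho_{k-2}$ is still in a square of the upper component, so the step repeats and the backward iterates diverge. This contradicts boundedness or periodicity exactly as before.

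The main obstacle is only bookkeeping. I must check that the labels of the orbit really are the edge labels that Proposition~\ref{prop:finite}(5) controls; Lemma~\ref{lem:edges} and Lemma~\ref{lem:walks} do this, since the relevant squares are vertices of the final graph in the upper component. I must also confirm that the constants in $F$ and $G$ match~\eqref{eq:twocycle}, with the correct ordering of the labels $\varepsilon\ell$ versus $\ell$. Once those are confirmed, the expansion argument is immediate.
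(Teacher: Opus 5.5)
Your proof is correct and follows the paper's argument: Proposition~\ref{prop:finite}(5) forces the two labels, and the expanding affine map whose fixed point is given by~\eqref{eq:twocycle} then pushes the coordinate further past the bound. The only difference is that the paper chooses $k$ with $(\rho_{k})_{u}$ minimal (resp.\ $(\rho_{k})_{s}$ maximal) among the finitely many values and reaches a contradiction in one step, whereas you iterate the forced step and appeal to boundedness or periodicity.
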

\begin{proof}
  The point is periodic, so the sequence $(\rho_{k})$ takes finitely many
  values. Choose $k$ such that $(\rho_{k})_{u}$ is the least of the
  unstable coordinates of these values, and suppose, by way of
  contradiction, that $(\rho_{k})_{u} < (5 + 5\sqrt{14})/2$. The square of
  $\rho_{k}$ is a vertex of the upper component, and it contains the point
  $\rho_{k}$. By Proposition~\ref{prop:finite}(5), the labels $\ell_{k}$
  and $\ell_{k + 1}$ of the orbit are $176 + 44\sqrt{14}$ and
  $216 + 56\sqrt{14}$. Taking unstable coordinates in
  $\rho_{k + 2} = \varepsilon\rho_{k + 1} - \lambda_{k + 1}
  = \varepsilon^{2}\rho_{k} - \varepsilon\lambda_{k} - \lambda_{k + 1}$ gives
  \[
    (\rho_{k + 2})_{u} = \varepsilon^{2}(\rho_{k})_{u}
    - \varepsilon(176 + 44\sqrt{14}) - (216 + 56\sqrt{14}).
  \]
  The first formula in~\eqref{eq:twocycle} says that
  \[
    \frac{5 + 5\sqrt{14}}{2} = \varepsilon^{2}\,\frac{5 + 5\sqrt{14}}{2}
    - \varepsilon(176 + 44\sqrt{14}) - (216 + 56\sqrt{14}).
  \]
  Subtracting,
  \[
    (\rho_{k + 2})_{u} - \frac{5 + 5\sqrt{14}}{2}
    = \varepsilon^{2}\left((\rho_{k})_{u} - \frac{5 + 5\sqrt{14}}{2}\right).
  \]
  The right side is negative and $\varepsilon^{2} > 1$, so
  $(\rho_{k + 2})_{u} < (\rho_{k})_{u}$. This contradicts the choice of
  $k$ and proves the first inequality.

  For the second inequality, choose $k$ such that $(\rho_{k})_{s}$ is the
  largest stable coordinate of a point of the orbit, and suppose that
  $(\rho_{k})_{s} > (11 - 5\sqrt{14})/2$. By
  Proposition~\ref{prop:finite}(5), the labels $\ell_{k - 1}$ and
  $\ell_{k - 2}$ of the orbit are $288 + 76\sqrt{14}$ and
  $216 + 56\sqrt{14}$. Taking stable coordinates in
  $\rho_{k} = \varepsilon^{2}\rho_{k - 2} - \varepsilon\lambda_{k - 2}
  - \lambda_{k - 1}$ gives
  \[
    (\rho_{k})_{s} = \varepsilon^{-2}(\rho_{k - 2})_{s}
    - \varepsilon^{-1}(216 - 56\sqrt{14}) - (288 - 76\sqrt{14}),
  \]
  and the second formula in~\eqref{eq:twocycle} says that
  \[
    \frac{11 - 5\sqrt{14}}{2} = \varepsilon^{-2}\,\frac{11 - 5\sqrt{14}}{2}
    - \varepsilon^{-1}(216 - 56\sqrt{14}) - (288 - 76\sqrt{14}).
  \]
  Subtracting,
  \[
    (\rho_{k - 2})_{s} - \frac{11 - 5\sqrt{14}}{2}
    = \varepsilon^{2}\left((\rho_{k})_{s} - \frac{11 - 5\sqrt{14}}{2}\right)
    > (\rho_{k})_{s} - \frac{11 - 5\sqrt{14}}{2},
  \]
  so $(\rho_{k - 2})_{s} > (\rho_{k})_{s}$. This contradicts the choice of
  $k$ and proves the lemma.
\end{proof}

\subsection{The critical bound and the number \texorpdfstring{$T$}{T}} \label{sec:Tvalue}
\begin{lemma} \label{lem:cornerbound}
  Let $\rho + \Lambda$ be a periodic point of $\mathcal{X}$ whose
  representative $\rho$ in $D$ is assigned to a critical square, and let
  $q = 4 + 4\sqrt{14}$. Then
  \begin{equation} \label{eq:cornerbound}
    -U \leq \rho_{u} - q < 0,\qquad 0 < \rho_{s} - \bar{q} \leq V,
  \end{equation}
  and consequently $|N(\rho - q)| \leq UV$.
\end{lemma}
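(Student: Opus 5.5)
The two strict inequalities are immediate from Proposition~\ref{prop:finite}(2): every point of a closed critical square satisfies $\rho_{u} < q$ and $\rho_{s} > \bar{q}$. The real content is therefore the two outer bounds, $\rho_{u} \geq q - U = (\eta_{0})_{u}$ and $\rho_{s} \leq \bar{q} + V = (\eta_{0})_{s}$. I will obtain both from the series of Lemma~\ref{lem:coding}, using the labels forced by Proposition~\ref{prop:finite}(3) and the tail bounds of Lemma~\ref{lem:minimum}.

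Let $\rho_{k}$ be the representatives in $D$ of the orbit, with $\rho_{0} = \rho$. By Lemma~\ref{lem:walks} the itinerary runs along a closed walk in the final graph through the critical square of $\rho_{0}$. Critical squares lie in the upper component, and a closed walk stays in one component, so the walk lies in the upper component and Lemma~\ref{lem:minimum} applies to every $\rho_{k}$.

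\textbf{Unstable coordinate.} The four edges of the walk starting at $\rho_{0}$ form a walk of length $4$ from a critical square. By Proposition~\ref{prop:finite}(3) the labels of the orbit at indices $0,1,2,3$ are $\ell_{0},\ell_{1},\ell_{2},\ell_{3}$. The identity in the proof of Lemma~\ref{lem:coding}(a) gives
\[
\rho_{u} = \sum_{j=0}^{3}\varepsilon^{-j-1}\ell_{j} + \varepsilon^{-4}(\rho_{4})_{u}.
\]
Lemma~\ref{lem:minimum} gives $(\rho_{4})_{u} \geq (5+5\sqrt{14})/2$. Comparing with \eqref{eq:etacoords}, this yields $\rho_{u} \geq (\eta_{0})_{u} = q - U$.

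\textbf{Stable coordinate.} The three edges into $\rho_{0}$ form a walk of length $3$ ending at a critical square, so the labels at indices $-3,-2,-1$ are $\ell_{-3},\ell_{-2},\ell_{-1}$. The backward identity gives
\[
\rho_{s} = -\bar{\ell}_{-1} - \varepsilon^{-1}\bar{\ell}_{-2} - \varepsilon^{-2}\bar{\ell}_{-3} + \varepsilon^{-3}(\rho_{-3})_{s}.
\]
Lemma~\ref{lem:minimum} gives $(\rho_{-3})_{s} \leq (11-5\sqrt{14})/2$. Comparing with the second line of \eqref{eq:etacoords}, this yields $\rho_{s} \leq (\eta_{0})_{s} = \bar{q} + V$.

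For the norm bound, \eqref{eq:cornerbound} gives $0 < q - \rho_{u} \leq U$ and $0 < \rho_{s} - \bar{q} \leq V$. Hence
\[
|N(\rho - q)| = (q - \rho_{u})(\rho_{s} - \bar{q}) \leq UV.
\]

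The only delicate point is bookkeeping. The walk given by Lemma~\ref{lem:walks} is closed, so its periodic extension supplies the walks of length $4$ out of and $3$ into the critical square. The orbit's labels coincide with the edge labels by Lemma~\ref{lem:edges}. The indices of the forced labels must also match exactly the truncation points $4$ and $-3$ used in \eqref{eq:etacoords}. Everything else is substitution into results already established.
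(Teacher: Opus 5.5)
Your proposal is correct and is essentially the paper's own proof. The strict inequalities come from Proposition~\ref{prop:finite}(2), and the outer bounds come from truncating the label series at indices $4$ and $-3$ after the forced labels of Proposition~\ref{prop:finite}(3), bounding the tails by Lemma~\ref{lem:minimum}, and comparing with \eqref{eq:etacoords}, with the same bookkeeping (the closed walk lies in the upper component, and the orbit labels are the edge labels).
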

\begin{proof}
  Let $\rho_{k}$ be the representatives in $D$ of the points of the orbit
  of $\rho + \Lambda$, with $\rho_{0} = \rho$, and let $\ell_{k}$ be the
  labels of the orbit. By Lemma~\ref{lem:walks}, the itinerary of the
  orbit runs along a closed walk in the final graph, and since this walk
  passes through a critical square, it lies in the upper component. By
  Proposition~\ref{prop:finite}(3), the labels $\ell_{0},\ldots,\ell_{3}$
  and $\ell_{-3},\ell_{-2},\ell_{-1}$ of the orbit are the labels
  in~\eqref{eq:labels} of the critical orbit. Four applications of
  $(\rho_{k})_{u} = \varepsilon^{-1}((\rho_{k + 1})_{u} + \ell_{k})$ give
  \[
    (\rho_{0})_{u} = \varepsilon^{-1}\ell_{0} + \varepsilon^{-2}\ell_{1}
    + \varepsilon^{-3}\ell_{2} + \varepsilon^{-4}\ell_{3}
    + \varepsilon^{-4}(\rho_{4})_{u},
  \]
  and by Lemma~\ref{lem:minimum},
  $(\rho_{4})_{u} \geq (5 + 5\sqrt{14})/2$. Comparing
  with~\eqref{eq:etacoords} gives
  $(\rho_{0})_{u} \geq (\eta_{0})_{u} = q - U$. Similarly, three
  applications of $(\rho_{k})_{s} = \varepsilon^{-1}(\rho_{k - 1})_{s}
  - \bar{\ell}_{k - 1}$ give
  \[
    (\rho_{0})_{s} = -\bar{\ell}_{-1} - \varepsilon^{-1}\bar{\ell}_{-2}
    - \varepsilon^{-2}\bar{\ell}_{-3} + \varepsilon^{-3}(\rho_{-3})_{s},
  \]
  and $(\rho_{-3})_{s} \leq (11 - 5\sqrt{14})/2$ by
  Lemma~\ref{lem:minimum}, so
  $(\rho_{0})_{s} \leq (\eta_{0})_{s} = \bar{q} + V$
  by~\eqref{eq:etacoords}. The inequalities $\rho_{u} < q$ and
  $\rho_{s} > \bar{q}$ are part of Proposition~\ref{prop:finite}(2).
  Finally $|N(\rho - q)| = |\rho_{u} - q|\,|\rho_{s} - \bar{q}| \leq UV$.
  This proves the lemma.
\end{proof}

Thus the translated points $\rho - q$ of the representatives $\rho$ at the
critical squares lie in the rectangle~\eqref{eq:cornerbound} in the
coordinates $(s,u)$, whose corner $(V,-U)$ is the translated point
$\eta_{0} - q$ of the critical orbit. We now define
\begin{equation} \label{eq:nu}
  \nu = UV = \varepsilon U^{2}
  = \frac{43305976025 - 11574008934\sqrt{14}}{4} = 0.6278\ldots,
  \qquad T = 2\nu^{-1/4}.
\end{equation}
Thus $T$ is obtained from the series~\eqref{eq:etacoords}, whose terms are
the labels of the critical orbit and the coordinates of the two period-$2$
orbits. The closed form of $T$ in Theorem~\ref{thm:main} follows from the
relation $\pi^{2} = 2\varepsilon$:
\[
  \nu = \varepsilon U^{2} = \frac{(U\pi)^{2}}{2},\qquad
  U\pi = \frac{(805523 - 215285\sqrt{14})(4 + \sqrt{14})}{2}
  = 104051 - \frac{55617}{2}\sqrt{14} > 0.
\]
Since $\sqrt{14}/\sqrt{2} = \sqrt{7}$, it follows that
\[
  \sqrt{\nu} = \frac{U\pi}{\sqrt{2}}
  = \frac{104051\sqrt{2} - 55617\sqrt{7}}{2},
\]
and therefore
\begin{equation} \label{eq:T}
  T = 2\nu^{-1/4} = \frac{2}{\sqrt{\sqrt{\nu}}}
  = \sqrt{\frac{8}{104051\sqrt{2} - 55617\sqrt{7}}}.
\end{equation}
In particular $(T/2)^{4} = 1/\nu$. Exact comparison in $K$ gives
\begin{equation} \label{eq:nubounds}
  \left(\frac{8}{9}\right)^{4} < \frac{627}{1000} < \nu
  < \frac{157}{250} < \frac{16}{25}.
\end{equation}
Since $T^{4} = 16/\nu$, it follows that $25 < T^{4} < (9/4)^{4}$, and
taking positive fourth roots gives
\begin{equation} \label{eq:Tbounds}
  \sqrt{5} < T < \frac{9}{4} < \sqrt{7}.
\end{equation}
Consequently $(t/2)^{h} \leq (9/8)^{h}$ for $0 < t \leq T$ and $h \geq 0$,
so by Lemma~\ref{lem:covered} every covered square gives a reducing
normalized remainder at the weight $T$, and Lemma~\ref{lem:X}(d) applies
with $t = T$. Finally, $U\pi + \overline{U\pi} = 208102$ is twice the
integer $104051$ that appears in $T$. We will see in \S\ref{sec:corner}
that this integer is also the coefficient of the displacement of the
homoclinic point $\eta_{0}$ of the critical orbit from the class of $p$.

The first inequality in~\eqref{eq:nubounds} says that the test at the
trial weight $9/4$ does not cover the critical squares with the quotient
$q$, because $|N(\eta_{0} - q)| = \nu$ exceeds $(8/9)^{4}$. The difference
is small, $\nu - (8/9)^{4} = 0.0035\ldots$, and among the thirty-two
tested quotients~\eqref{eq:qbox} of a critical square, $q$ is the one for
which the test~\eqref{eq:rationaltest} fails by the smallest margin: for
each tested quotient of each critical square we computed the ratio of the
two sides of~\eqref{eq:rationaltest}, and the least ratio is between
$1.014$ and $1.035$, attained by $q$, while the next least, attained by
$5 + 3\sqrt{14}$, exceeds $1.05$. This is the sense in which the finite
calculation finds $T$: the critical squares are the squares through which
every obstructing orbit must pass, the quotient $q$ is the quotient that
comes closest to covering them, and $T$ is the weight at which the failing
test becomes an exact equality on the critical orbit.

\begin{remark} \label{rem:symmetry}
  The identities~\eqref{eq:symmetry} come from a symmetry of the critical
  orbit. The reflection $(x,y) \mapsto (8 - x,y)$ of \S\ref{sec:squares}
  reverses the direction of time and maps $\Lambda$ onto itself. A direct
  calculation with the matrix of multiplication by $\varepsilon^{-1}$
  shows that it carries an orbit with labels $(\ell_{k})$ to an orbit with
  labels $(\ell'_{k})$, where the coefficient point of $\ell'_{k}$ is
  $(112 + a,\,32 - b)$ when the coefficient point of
  $\varepsilon^{-1}\ell_{-k}$ is $(a,b)$. Applied to the
  labels~\eqref{eq:labels}, this rule gives $\ell'_{k} = \ell_{k}$ for
  every $k$. For example, $\varepsilon^{-1}\ell_{-2}$ has the coefficient
  point $(80,-16)$, and $(112 + 80,\,32 + 16) = (192,48)$ is the
  coefficient point of $\ell_{2}$. By Lemma~\ref{lem:coding}(a), the
  reflection therefore maps the critical orbit onto itself, and it sends
  $\eta_{k}$ to $\eta_{1 - k}$. Since the reflection sends a point with
  coordinates $(\rho_{s},\rho_{u})$ to a point with coordinates
  $(8 - \rho_{u},\,8 - \rho_{s})$, this gives the first two identities
  in~\eqref{eq:symmetry}. In the coordinates $(s,u)$, the translated point
  $\eta_{0} - q = (V,-U)$ and the translated point
  $\eta_{1} - q
  = (U,-V) = \varepsilon(V,-U)$ are reflections of each other in the line
  $s = -u$, and $V = \varepsilon U$ says that this reflection agrees with
  multiplication by $\varepsilon$ at the point $(V,-U)$. This is also why
  $\nu = \varepsilon U^{2}$ is $(U\pi)^{2}/2$, a square up to the factor
  $2$.
\end{remark}

\subsection{Division at the weight \texorpdfstring{$T$}{T}} \label{sec:endpoint}
We now prove statement (B) of \S\ref{sec:strategy}, that every class can
be divided at the weight $T$, and Proposition~\ref{prop:ordinary}.

\begin{proposition} \label{prop:division}
  Every $z \in \mathcal{Z}$ has a normalized remainder $w$ with
  $0 \leq \ve(w) \leq 4$ and $\phi_{T}(w) < 1$.
\end{proposition}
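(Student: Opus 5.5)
We argue by contradiction. Let $z \in \mathcal{Z}$. By Lemma~\ref{lem:periodic}(b) we replace $z$ by a representative $\xi$ of its class with $\ve(\xi) \geq 5$. For every $q \in R$ with $\ve(q) \leq 4$ we then have $\ve(\xi - q) = \ve(q)$ by~\eqref{eq:fixedval}. So a normalized remainder $w = \xi - q$ with $0 \leq \ve(w) \leq 4$ and $\phi_{T}(w) < 1$ is the same as a quotient $q \notin \pi^{5}R$ with $|N(\xi - q)|(T/2)^{\ve(q)} < 1$.

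Suppose no such $q$ exists. Since $T < 9/4$ by~\eqref{eq:Tbounds}, Lemma~\ref{lem:X}(d) with $t = T$ shows that the point $\rho + \Lambda$ of $\xi$ lies in $\mathcal{X}$. It is periodic by Lemma~\ref{lem:periodic}(a), and $\mathcal{X}$ is invariant under $\varepsilon$ and negation by Lemma~\ref{lem:X}(a).

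Next we dispose of the orbits contained in $\mathcal{F}$. The point $0 + \Lambda$ would mean $\xi \in \pi^{5}R$, so $z \in R$, which is impossible. The half-integral points of $\mathcal{F}$ represent $p + R$, which has valuation $-2$, contradicting $\ve(\xi) \geq 5$. Since the class $\varepsilon^{k}\xi + R$ also has nonnegative valuation, the same argument applies to every point of the orbit, so the orbit is not contained in $\mathcal{F}$. Lemma~\ref{lem:passage} then gives $k \in \Z$ and a sign $\pm$ such that the representative $\rho'$ in $D$ of $\pm\varepsilon^{k}\rho + \Lambda$ lies in a critical square. This point is a periodic point of $\mathcal{X}$, and it is the point of $\xi' = \pm\varepsilon^{k}\xi + \gamma$ for some $\gamma \in \pi^{5}R$. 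By Lemma~\ref{lem:periodic}(c), $\ve(\xi') \geq 5$, and $\xi'$ is the element of $K$ whose coefficient point is $\rho'$.

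Lemma~\ref{lem:cornerbound} now gives $|N(\xi' - q)| = |N(\rho' - q)| \leq UV = \nu$ with $q = 4 + 4\sqrt{14}$. This is the key step. Since $\xi' \in K$, the number $N(\xi' - q)$ is rational. On the other hand $\nu$ is irrational, because $\nu = (43305976025 - 11574008934\sqrt{14})/4$ has a nonzero $\sqrt{14}$-coefficient. Hence the inequality is strict: $|N(\xi' - q)| < \nu$.

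Since $\ve(q) = 4$, we get
\[
  \phi_{T}(\xi' - q) = |N(\xi' - q)|(T/2)^{4} < \nu \cdot \nu^{-1} = 1
\]
by~\eqref{eq:nu}. Now set $w' = \pm\varepsilon^{-k}(\xi' - q)$. This is a normalized remainder of $\xi$: indeed $\pm\varepsilon^{-k}\xi' = \xi \pm \varepsilon^{-k}\gamma$, so $w' = \xi - q''$ with $q'' = \mp\varepsilon^{-k}\gamma \pm \varepsilon^{-k}q \in R$. Its valuation is $4$, and $\phi_{T}(w') = \phi_{T}(\xi' - q) < 1$ because units have weighted norm $1$. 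This contradicts the assumption, and since $w'$ lies in the class $z + R$, it is the required normalized remainder of $z$.

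The only delicate points are the exclusion of the orbits in $\mathcal{F}$ and the transport of the quotient back through multiplication by $\pm\varepsilon^{-k}$. The substantive inequality is the strictness obtained from the irrationality of $\nu$; everything else is already contained in Lemmas~\ref{lem:passage} and~\ref{lem:cornerbound}.
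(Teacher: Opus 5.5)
Your proof is correct and follows the paper's argument. You pass to a representative of valuation at least $5$, put its point in $\mathcal{X}$ by Lemma~\ref{lem:X}(d) with $t = T < 9/4$, exclude orbits in $\mathcal{F}$, and apply Lemmas~\ref{lem:passage} and~\ref{lem:cornerbound} with $q = 4 + 4\sqrt{14}$, getting strictness from the irrationality of $\nu$. The only difference is cosmetic: you carry the resulting normalized remainder back to the class of $z$ by $\pm\varepsilon^{-k}$, whereas the paper carries the hypothesis forward to $\pm\varepsilon^{k}z$ and derives the contradiction there.
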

\begin{proof}
  Suppose, by way of contradiction, that some $z \in \mathcal{Z}$ has no
  normalized remainder $w$ with $\ve(w) \leq 4$ and $\phi_{T}(w) < 1$. The
  elements $-z$ and $\varepsilon^{k}z$, for $k \in \Z$, have the same
  property, because the maps $x \mapsto -x$ and
  $x \mapsto \varepsilon^{k}x$ permute $R$ and preserve absolute norms and
  valuations, so that they map the normalized remainders of $z$ onto the
  normalized remainders of $-z$ and of $\varepsilon^{k}z$ without changing
  their weighted norms. By Lemma~\ref{lem:periodic}(b), the class $z + R$
  contains an element $\xi$ with $\ve(\xi) \geq 5$, and subtracting an
  element of $\pi^{5}R$ we may take its coefficient point $\rho$ in $D$.
  By Lemma~\ref{lem:X}(d) with $t = T < 9/4$, the point $\rho + \Lambda$
  lies in $\mathcal{X}$, and it is periodic by
  Lemma~\ref{lem:periodic}(a).

  The orbit of $\rho + \Lambda$ is not contained in $\mathcal{F}$.
  Otherwise $\rho + \Lambda$ would be a point of $\mathcal{F}$, so $\xi$
  would lie in $\pi^{5}R$ or in $p + R$. The first is impossible because
  $z \notin R$, and the second because $\ve(\xi) \geq 5$ while every
  element of $p + R$ has valuation $-2$ by Lemma~\ref{lem:pclass}. By
  Lemma~\ref{lem:passage}, after replacing $z$ by $-z$ and $\xi$ by
  $8 + 4\sqrt{14} - \xi$, whose coefficient point $(8 - x,4 - y)$
  represents $-\rho + \Lambda$ and whose valuation is still at least $5$,
  we may assume that some point $\varepsilon^{k}\rho + \Lambda$ of the
  orbit has its representative $\rho_{k}$ in $D$ assigned to a critical
  square. Let $\xi_{k}$ be the element with coefficient point $\rho_{k}$.
  Then $\xi_{k} = \varepsilon^{k}\xi - \gamma_{k}$ with
  $\gamma_{k} \in \pi^{5}R$, so $\ve(\xi_{k}) \geq 5$ by
  Lemma~\ref{lem:periodic}(c), and $\xi_{k} \in \varepsilon^{k}z + R$. Let
  $q = 4 + 4\sqrt{14}$ and $w = \xi_{k} - q$. By~\eqref{eq:fixedval},
  $\ve(w) = \ve(q) = 4$, and by Lemma~\ref{lem:cornerbound},
  $|N(w)| = |N(\rho_{k} - q)| \leq \nu$. But $N(w) \in \Q$ and
  $\nu \notin \Q$, by~\eqref{eq:nu}, so $|N(w)| < \nu$. Since
  $(T/2)^{4} = 1/\nu$,
  \[
    \phi_{T}(w) = |N(w)|\left(\frac{T}{2}\right)^{4}
    = \frac{|N(w)|}{\nu} < 1.
  \]
  Thus $w$ is a normalized remainder of $\varepsilon^{k}z$ with
  $\ve(w) \leq 4$ and $\phi_{T}(w) < 1$, which contradicts the first
  paragraph. Hence every $z \in \mathcal{Z}$ has a normalized remainder
  $w$ with $\ve(w) \leq 4$ and $\phi_{T}(w) < 1$. Finally $\ve(w) \geq 0$,
  because the class $z + R$ has nonnegative valuation. This proves the
  proposition.
\end{proof}

\begin{proof}[Proof of Proposition~\ref{prop:ordinary}]
  Suppose, by way of contradiction, that $z \notin p + R$ has no
  normalized remainder of absolute norm less than $1$. Then $z \notin R$,
  because $0$ is a normalized remainder of every element of $R$. Let
  $\rho$ be the coefficient point of $z$, taken in $D$. By
  Lemma~\ref{lem:X}(c), $\rho + \Lambda$ lies in $\mathcal{X}$, and it is
  periodic by Lemma~\ref{lem:periodic}(a). Its orbit is not contained in
  $\mathcal{F}$, because the points of $\mathcal{F}$ are the coefficient
  points of the elements of $\pi^{5}R$ and of $p + R$ modulo $\Lambda$,
  and $z$ lies in neither class. The elements $-z$ and $\varepsilon^{k}z$
  also have no normalized remainder of absolute norm less than $1$. By
  Lemma~\ref{lem:passage}, after replacing $z$ by $-z$ we may assume that
  some point of the orbit has its representative $\rho_{k}$ assigned to a
  critical square. The element $\xi_{k}$ with coefficient point $\rho_{k}$
  lies in $\varepsilon^{k}z + R$, and by Lemma~\ref{lem:cornerbound} its
  normalized remainder $w = \xi_{k} - q$ with $q = 4 + 4\sqrt{14}$
  satisfies $|N(w)| \leq \nu < 1$. Since $\varepsilon^{-k}w$ is a
  normalized remainder of $z$ with the same absolute norm, this is a
  contradiction. This proves Proposition~\ref{prop:ordinary}.
\end{proof}

The bound~\eqref{eq:cornerbound} is sharp only at one point, the corner
$(\rho_{s},\rho_{u}) = (\bar{q} + V,\,q - U)$ of the
rectangle~\eqref{eq:cornerbound}. This point is $\eta_{0}$, and the
translated point $\eta_{0} - q$ has the coordinates $(V,-U)$. We call
$(V,-U)$ the \emph{critical corner}. The bound in
Proposition~\ref{prop:division} is strict for normalized remainders in
$K$, because their absolute norms are rational and $\nu$ is irrational.
This explains division at the weight $T$, although the bound on the
rectangle is not strict. To show that no greater weight lies in the window
we must approach the critical corner by normalized remainders of classes
of nonnegative valuation and exclude all other reducing normalized
remainders. A closed walk in the graph need not be the itinerary of a
periodic orbit, so the argument of this section does not by itself supply
those classes.
\section{Classes that obstruct division above \texorpdfstring{$T$}{T}} \label{sec:family}
In this section we prove statement (A) of \S\ref{sec:strategy}
(Proposition~\ref{prop:upper}). We construct periodic classes
$z_{m} + R$ of nonnegative valuation using sequences of labels from
the final graph of \S\ref{sec:weighted}. These sequences follow the labels
of the period-$2$ orbit of $(11/2,5/2) + \Lambda$ for about $m$ steps,
then those of the critical orbit through the critical squares, then
those of the period-$2$ orbit of $(5/2,5/2) + \Lambda$ for about $m$
steps, and they return using the labels of a second orbit of the final
graph. The periodic label sequences determine the elements $z_{m}$ by
the series of
Lemma~\ref{lem:coding}, and the orbit passes arbitrarily close to the
critical corner (Figure~\ref{fig:corner}). We choose the periods
arithmetically in \S\ref{sec:periods}. We then show that only eight
positions in the orbit can carry a small normalized remainder, that a reflection
pairs these positions, and that the quotients at the remaining four
positions lie in four finite boxes. Comparing the limits of the absolute
norms of the corresponding normalized remainders with $1$ excludes every reducing
normalized remainder at the weights above $T$.

\subsection{The critical corner} \label{sec:corner}
In \S\S\ref{sec:graph}--\ref{sec:weighted} we worked modulo $\pi^{5}R$, so
that we could choose representatives of valuation at least $5$. For such a
representative the valuation of a normalized remainder is the valuation of
its quotient, by~\eqref{eq:fixedval}. Here we work modulo $R$. We will
compute the valuations of the explicit normalized remainders directly, so
we no longer need the lattice $\pi^{5}R$. Modulo $R$ the class of $p$ is
fixed by $\varepsilon$, which makes the construction of the orbits
simpler. The lattice of $R$ contains the lattice $\pi^{5}R$, so congruence
modulo $\Lambda$ implies congruence modulo $R$, and an orbit in
$\R^{2}/\Lambda$ projects to an orbit modulo $R$. The two period-$2$
orbits of \S\ref{sec:etadef} consist of points of elements of $p + R$, so
both project to the fixed point of $p$, and the critical orbit, which is
heteroclinic in $\R^{2}/\Lambda$, projects to a homoclinic orbit of that
fixed point.

In this section we describe a point of $\R^{2}$ by its stable and unstable
coordinates, and we write it as $(s,u)$, as Ramsey
does~\cite[Section~3]{Ramsey2025}. Thus an element $x$ of $K$ corresponds
to $(\bar{x},x)$, which we call the point of $x$ in the coordinates
$(s,u)$. In this section congruence of points is always modulo $R$, and
the lattice of $R$ in these coordinates is
\[
  \{(\bar{r},r) \colon r \in R\} \subseteq \R^{2}.
\]
This is the lattice used for congruence modulo $R$ in this section, rather
than the lattice $\Lambda$ of \S\ref{sec:squares}. The notation
$(s,u) \equiv (s',u') \pmod{R}$ means that $(s - s',u - u') = (\bar{r},r)$
for some $r \in R$. Multiplication by $\varepsilon$ acts by
$\varepsilon \cdot (s,u) = (\varepsilon^{-1}s,\varepsilon u)$. We say that
points $(s_{k},u_{k})$ \emph{converge to $(s,u)$ modulo $R$} if there are
elements $r_{k} \in R$ with $s_{k} - s - \bar{r}_{k} \to 0$ and
$u_{k} - u - r_{k} \to 0$. Since $(\varepsilon^{k} - 1)p \in R$ by
Lemma~\ref{lem:pclass}, subtracting the point of $(\varepsilon^{k} - 1)p$
gives
\begin{equation} \label{eq:Ek}
  \varepsilon^{k} \cdot (\bar{p} + a,\,p + b) \equiv
  (\bar{p} + \varepsilon^{-k}a,\ p + \varepsilon^{k}b) \pmod{R}
\end{equation}
for all real numbers $a$ and $b$ and all $k \in \Z$. In particular the
point $(\bar{p},p)$ is fixed modulo $R$. The next lemma describes the
points whose orbits approach it.

\begin{lemma} \label{lem:nearp}
  Let $s,u \in \R$. The points $\varepsilon^{k} \cdot (s,u)$ converge to
  $(\bar{p},p)$ modulo $R$ as $k \to \infty$ if and only if $u \in p + R$.
  They converge to $(\bar{p},p)$ modulo $R$ as $k \to -\infty$ if and only
  if $s \in \bar{p} + R$.
\end{lemma}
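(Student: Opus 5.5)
Write $s = \bar{p} + a$ and $u = p + b$ with $a,b \in \R$. By~\eqref{eq:Ek}, $\varepsilon^{k}\cdot(s,u) \equiv (\bar{p} + \varepsilon^{-k}a,\ p + \varepsilon^{k}b) \pmod{R}$. The question is therefore whether there are $r_{k} \in R$ with $\varepsilon^{-k}a - \bar{r}_{k} \to 0$ and $\varepsilon^{k}b - r_{k} \to 0$ as $k \to \infty$. I will prove the forward statement directly; the backward statement follows by the symmetric argument with the roles of $s$ and $u$ (and of $\varepsilon$ and $\varepsilon^{-1}$) exchanged, or equivalently by applying conjugation, which reverses time.

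For the sufficiency direction, suppose $u \in p + R$, so $b \in R$. Take $r_{k} = \varepsilon^{k}b \in R$. Then the unstable difference is exactly $0$. The stable difference is
\[
  \varepsilon^{-k}a - \overline{\varepsilon^{k}b} = \varepsilon^{-k}a - \varepsilon^{-k}\bar{b},
\]
using $\bar{\varepsilon} = \varepsilon^{-1}$. This tends to $0$ as $k \to \infty$.

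For the necessity direction, suppose suitable $r_{k}$ exist. Set $d_{k} = \varepsilon^{k}b - r_{k}$, so $d_{k} \to 0$. Since $\varepsilon^{-k}a \to 0$, we also get $\bar{r}_{k} \to 0$. Now consider $e_{k} = r_{k+1} - \varepsilon r_{k} \in R$. Its unstable coordinate is
\[
  e_{k} = -d_{k+1} + \varepsilon d_{k} \to 0,
\]
and its conjugate is $\bar{e}_{k} = \bar{r}_{k+1} - \varepsilon^{-1}\bar{r}_{k} \to 0$. The points $(\bar{e}_{k}, e_{k})$ lie in the lattice of $R$, which is discrete, so $e_{k} = 0$ for all large $k$, say $k \geq k_{0}$. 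Hence $r_{k} = \varepsilon^{k-k_{0}}r_{k_{0}}$ for $k \geq k_{0}$. It follows that
\[
  \varepsilon^{k}\bigl(b - \varepsilon^{-k_{0}}r_{k_{0}}\bigr) = d_{k} \to 0,
\]
and since $\varepsilon > 1$ this forces $b = \varepsilon^{-k_{0}}r_{k_{0}} \in R$. Thus $u \in p + R$.

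The discreteness step is the only one needing care: both the stable and the unstable coordinates of $e_{k}$ must tend to $0$. The unstable one uses $d_{k} \to 0$, and the stable one uses the decay of $\bar{r}_{k}$, which comes from the decay of $\varepsilon^{-k}a$. Once that is checked, the rest is bookkeeping.
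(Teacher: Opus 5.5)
Your proof is correct and is essentially the paper's argument: the same choice $r_{k} = \varepsilon^{k}b$ for sufficiency, and for necessity the same element $r_{k+1} - \varepsilon r_{k} \in R$, both of whose coordinates tend to $0$, so that it vanishes for large $k$. You cite discreteness of the lattice of $R$ where the paper uses the bound $|N(g)| \geq 1$ for $0 \neq g \in R$, which is the same fact. For $k \to -\infty$ the paper uses the swap $(s,u) \mapsto (u,s)$, which is your conjugation remark; that route also needs $p - \bar{p} = \sqrt{14} \in R$, so that $(p,\bar{p}) \equiv (\bar{p},p)$ and $p + R = \bar{p} + R$, whereas rerunning your argument directly with the roles of $s$ and $u$ exchanged does not need this.
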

\begin{proof}
  Write $s = \bar{p} + a$ and $u = p + b$. If $b \in R$, subtracting the
  point $(\varepsilon^{-k}\bar{b},\varepsilon^{k}b)$ of $\varepsilon^{k}b$
  from the right side of~\eqref{eq:Ek} gives the representative
  $(\bar{p} + \varepsilon^{-k}(a - \bar{b}),\ p)$, which tends to
  $(\bar{p},p)$ as $k \to \infty$.

  Conversely, suppose that the points converge as $k \to \infty$.
  By~\eqref{eq:Ek} there are elements $r_{k} \in R$ such that
  \[
    \alpha_{k} = \varepsilon^{k}b - r_{k} \to 0,\qquad
    \beta_{k} = \varepsilon^{-k}a - \bar{r}_{k} \to 0.
  \]
  The element $g_{k} = r_{k + 1} - \varepsilon r_{k}$ of $R$ satisfies
  $g_{k} = \varepsilon\alpha_{k} - \alpha_{k + 1}$ and
  $\bar{g}_{k} = \varepsilon^{-1}\beta_{k} - \beta_{k + 1}$, so
  $g_{k} \to 0$ and $\bar{g}_{k} \to 0$, and $N(g_{k}) \to 0$. A nonzero
  element of $R$ has norm of absolute value at least $1$. Hence there is
  $k_{0}$ such that $g_{k} = 0$ for $k \geq k_{0}$, and then
  $\alpha_{k} = \varepsilon^{k - k_{0}}\alpha_{k_{0}}$ for $k \geq k_{0}$.
  Since $\alpha_{k} \to 0$ and $\varepsilon^{k - k_{0}} \to \infty$, we
  have $\alpha_{k_{0}} = 0$, and so
  $b = \varepsilon^{-k_{0}}r_{k_{0}} \in R$.

  For $k \to -\infty$, consider the map $(s,u) \mapsto (u,s)$. It maps the
  point of $r \in R$ to the point of $\bar{r} \in R$, so it preserves
  congruence modulo $R$. It maps $\varepsilon^{-k} \cdot (s,u)$ to
  $\varepsilon^{k} \cdot (u,s)$, and it maps $(\bar{p},p)$ to
  $(p,\bar{p})$, which is congruent to $(\bar{p},p)$ modulo $R$ because
  $p - \bar{p} = \sqrt{14} \in R$. Hence the points
  $\varepsilon^{-k} \cdot (s,u)$ converge to $(\bar{p},p)$ modulo $R$ as
  $k \to \infty$ if and only if the points $\varepsilon^{k} \cdot (u,s)$
  do, that is, by the first part, if and only if
  $s \in p + R = \bar{p} + R$. This proves the lemma.
\end{proof}

A point that is not congruent to $(\bar{p},p)$ but whose orbit converges
to $(\bar{p},p)$ modulo $R$ in both directions is called a
\emph{homoclinic point} of $(\bar{p},p)$ in the theory of dynamical
systems. Its orbit leaves the neighborhood of $(\bar{p},p)$ and returns to
it. By Lemma~\ref{lem:nearp}, the homoclinic points are the points $(s,u)$
with $s \in \bar{p} + R$ and $u \in p + R$ that are not congruent to
$(\bar{p},p)$. Subtracting the point $(\bar{r},r)$ with
$\bar{r} = s - \bar{p}$ shows that every such point is congruent modulo
$R$ to a point $(\bar{p},p + \delta)$ with $0 \neq \delta \in R$, and
$\delta$ is unique, because the only point $(\bar{r},r)$ with
$\bar{r} = 0$ is $(0,0)$. We call $\delta$ the \emph{displacement} of the
homoclinic point. By~\eqref{eq:Ek}, for $k \geq 0$,
\[
  \varepsilon^{-k} \cdot (\bar{p},p + \delta)
  \equiv (\bar{p},\ p + \varepsilon^{-k}\delta),\qquad
  \varepsilon^{k} \cdot (\bar{p},p + \delta)
  \equiv (\bar{p} - \varepsilon^{-k}\bar{\delta},\ p) \pmod{R},
\]
and both points tend to $(\bar{p},p)$.

We now return to the critical orbit of \S\ref{sec:etadef} and describe it
modulo $R$. Let $q = 4 + 4\sqrt{14}$. By Lemma~\ref{lem:cornerbound}
and~\eqref{eq:cornerbound}, the translated point $\rho - q$ of the
representative $\rho$ of a periodic point of $\mathcal{X}$ in a critical
square satisfies $|N(\rho - q)| \leq UV = \nu$, with equality only when
$\rho = \eta_{0}$, where the translated point $\eta_{0} - q$ is $(V,-U)$.
At the negatives of the critical squares the corresponding translated
point is $(-V,U)$. Negation preserves the absolute norms and the
valuations of normalized remainders and carries the class $p + R$ to
itself, because $2p \in R$. So we consider only the point $(V,-U)$.

Both coordinates of $\eta_{0}$ come from the class of $p$. By
Lemma~\ref{lem:pclass}, for $r \in R$,
\[
  \varepsilon^{-1}(p + r) = p + (\varepsilon^{-1} - 1)p + \varepsilon^{-1}r
  \in p + R,
\]
and $(5 + 5\sqrt{14})/2 = p + 1 + 2\sqrt{14} \in p + R$, while the labels
$\ell_{0},\ldots,\ell_{3}$ lie in $R$. Hence the first formula
in~\eqref{eq:etacoords} shows that $(\eta_{0})_{u}$ lies in $p + R$.
Similarly, $\varepsilon(p + r) \in p + R$ for $r \in R$, so conjugating
gives $\varepsilon^{-1}(\bar{p} + \bar{r}) \in \bar{p} + R$, and
$(11 - 5\sqrt{14})/2 = \overline{p + 4 + 2\sqrt{14}} \in \bar{p} + R$.
Hence the second formula in~\eqref{eq:etacoords} shows that
$(\eta_{0})_{s}$ lies in $\bar{p} + R$. Explicitly,
by~\eqref{eq:etacoords} and $p = (3 + \sqrt{14})/2$,
\[
  (\eta_{0})_{s} = \bar{q} + V = \bar{p} + 13445 - 3595\sqrt{14},\qquad
  (\eta_{0})_{u} = q - U = p - 402759 + 107646\sqrt{14}.
\]
Thus the stable coordinate of $\eta_{0}$ lies in $\bar{p} + R$ and its
unstable coordinate lies in $p + R$, and $\eta_{0}$ is a homoclinic point
of $(\bar{p},p)$. Subtracting the point of $r = 13445 + 3595\sqrt{14}$
shows that it is congruent to $(\bar{p},p + \delta)$ with
\[
  \delta = (\eta_{0})_{u} - \overline{(\eta_{0})_{s}}
  = -416204 + 104051\sqrt{14} = -104051\bar{\pi}.
\]
We write $\delta = -n\bar{\pi}$ with $n = 104051$. By~\eqref{eq:Ek}, the
point $\eta_{k}$ is congruent to $(\bar{p},p + \varepsilon^{k}\delta)$
modulo $R$ for every $k \in \Z$, so modulo $R$ the critical orbit is the
homoclinic orbit through the point of displacement $\delta$.

This congruence determines the coordinates of $\eta_{k}$ that were not
computed in \S\ref{sec:etadef}. Let $k \leq -4$. By \S\ref{sec:etadef},
the stable coordinate of $\eta_{k}$ is that of $(11/2,5/2)$ if $k$ is odd
and that of $(13/2,7/2)$ if $k$ is even. These two points are the
coefficient points of $(11 + 5\sqrt{14})/2 = p + 4 + 2\sqrt{14}$ and
$(13 + 7\sqrt{14})/2 = p + 5 + 3\sqrt{14}$, which lie in $p + R$, so both
points are congruent to $(\bar{p},p)$ modulo $R$. Hence the difference
between $\eta_{k}$ and the point of the same parity is congruent to
$(0,\varepsilon^{k}\delta)$ modulo $R$, and its stable coordinate is $0$.
Since the only point $(\bar{r},r)$ with $\bar{r} = 0$ is $(0,0)$, the
difference is $(0,\varepsilon^{k}\delta)$. In the same way, for $k \geq 4$
the point $\eta_{k}$ has the unstable coordinate of $(5/2,5/2)$ or of
$(3/2,7/2)$, these points are congruent to $(\bar{p},p)$, and the
congruence $\eta_{k} \equiv (\bar{p} - \varepsilon^{-k}\bar{\delta},\,p)$,
which follows from~\eqref{eq:Ek}, gives the stable coordinate. Thus
\begin{equation} \label{eq:etatail}
  \begin{aligned}
    (\eta_{k})_{u}
                                   & = \frac{11 + 5\sqrt{14}}{2} + \varepsilon^{k}\delta
    \quad\text{($k \leq -5$ odd)}, &
    (\eta_{k})_{u}                 & = \frac{13 + 7\sqrt{14}}{2} + \varepsilon^{k}\delta
    \quad\text{($k \leq -4$ even)}, \\
    (\eta_{k})_{s}                 & = \frac{5 - 5\sqrt{14}}{2} - \varepsilon^{-k}\bar{\delta}
    \quad\text{($k \geq 4$ even)}, &
    (\eta_{k})_{s}                 & = \frac{3 - 7\sqrt{14}}{2} - \varepsilon^{-k}\bar{\delta}
    \quad\text{($k \geq 5$ odd)}.
  \end{aligned}
\end{equation}
The quotient $q_{*} = q - r = -13441 - 3591\sqrt{14}$ satisfies
\begin{equation} \label{eq:targetpair}
  \bar{p} - \bar{q}_{*} = V,\qquad p + \delta - q_{*} = -U.
\end{equation}
Equivalently, $q_{*} = p - \bar{V}$ and $\delta = -U - \bar{V}$. The
stable coordinate of $\eta_{0}$ determines the quotient, and the unstable
coordinate then determines the displacement $\delta$. Since
$\bar{V} = \varepsilon^{-1}\bar{U}$ and $\varepsilon^{-1}\pi = \bar{\pi}$,
\[
  n = \frac{U + \bar{V}}{\bar{\pi}} = \frac{\pi(U + \bar{V})}{2}
  = \frac{U\pi + \overline{U\pi}}{2},
\]
so the integer $n$ is the integer $104051$ that appears in the formula for
$T$ in \S\ref{sec:Tvalue}. Since $n$ is odd and $\ve(\bar{\pi}) = 1$, the
displacement $\delta = -n\bar{\pi}$ has valuation $1$. Also
$\bar{\delta} = -n\pi = \varepsilon\delta$, because
$\pi = \varepsilon\bar{\pi}$.

\begin{figure}[ht]
  \centering
  \begin{tikzpicture}
    \begin{scope}
      \filldraw[thin, fill = gray!35] (0,-1.6) rectangle (3.5,0);
      \draw[thin, -stealth] (-0.3,0) -- (4.6,0) node[right] {\small $s$};
      \draw[thin, -stealth] (0,-3.9) -- (0,0.8) node[above] {\small $u$};
      \node[above] at (3.5,0) {\small $V$}; \node[left] at (0,-1.6)
      {\small $-U$};
      % Schematic: the corner (V,-U) is drawn at (3.5,-1.6), so the
      % hyperbola |su| = nu through it is su = -3.5*1.6 = -5.6, and the
      % line u = -s/epsilon through the origin and the corner has the
      % slope -1.6/3.5. The normalized remainders w_m lie on this line because
      % conj(w_m) = -epsilon w_m, and they approach the corner from the
      % side of the origin.
      \draw[semithick] plot[domain = 1.5:4.5, samples = 80, smooth]
      (\x,{-5.6/\x}); \node[right] at (1.62,-3.62) {\small $|su| = \nu$};
      \draw[densely dotted] (0,0) -- (4.2,-1.92); \node[below left] at
      (2.25,-1.12) {\small $u = -s/\varepsilon$}; \fill (2.65,-1.2114)
      circle (1.1pt); \fill (3.10,-1.4171) circle (1.1pt); \fill
      (3.33,-1.5223) circle (1.1pt); \node[above right] at (2.60,-1.17)
      {\small $w_{m}$}; \filldraw[fill = white] (3.5,-1.6) circle (1.8pt);
      \node[below right] at (3.5,-1.6) {\small $(V,-U)$}; \node at
      (2.0,-4.45) {\small (a)};
    \end{scope}
    \begin{scope}[shift = {(7.6,0)}]
      % Schematic: the hyperbolas su = +-0.05 with 0.22361 = sqrt(0.05),
      % parametrized by (0.22361 e^t, 0.22361 e^-t). The parameter limits
      % 2.7926 = log(3.65/0.22361) and 1.6376 = log(1.15/0.22361) make the
      % curves end at the outermost drawn orbit points.
      \draw[gray!70] plot[variable = \t, domain = -2.7926:1.6376, samples
      = 150, smooth] ({-0.22361*exp(\t)},{-0.22361*exp(-\t)});
      \draw[gray!70] plot[variable = \t, domain = -1.6376:2.7926, samples
      = 150, smooth] ({0.22361*exp(\t)},{0.22361*exp(-\t)}); \draw[thin]
      (-2.0,0) -- (4.4,0); \node[above left] at (4.4,0) {\footnotesize
      stable line}; \draw[thin, -stealth] (-2.0,0) -- (-1.55,0);
      \draw[thin, -stealth] (4.4,0) -- (3.25,0); \draw[thin, -stealth]
      (0,0) -- (0,1.9) node[above] {\footnotesize unstable line};
      \draw[thin, -stealth] (0,0) -- (0,-4.0); \draw[thin, dashed,
      -stealth] (0.07,-3.62) to[bend right = 25] (3.62,0.07); \draw[thin,
      dashed, -stealth] (-0.09,1.12) to[bend right = 30] (-1.12,-0.07);
      \foreach \x/\y in {-1.15/-0.0435, -0.45/-0.1111, -0.3114/-0.1606,
      -0.2155/-0.232, -0.1491/-0.3352, -0.1032/-0.4844, -0.0714/-0.7,
      -0.0323/-1.55, -0.0227/-2.2, -0.0172/-2.9, -0.0137/-3.65,
      0.0435/1.15, 0.1111/0.45, 0.1606/0.3114, 0.232/0.2155,
      0.3352/0.1491, 0.4844/0.1032, 0.7/0.0714, 1.55/0.0323, 2.2/0.0227,
      2.9/0.0172, 3.65/0.0137} \fill (\x,\y) circle (1.1pt); \draw (0,0)
      circle (1.8pt); \draw (0,1.15) circle (2.6pt); \draw (0,-3.65)
      circle (2.6pt); \node[below right] at (0.05,-0.05) {\small
      $(\bar{p},p)$}; \node[right] at (0.14,1.15) {\small
      $(\bar{p},p+\bar{\pi})$}; \node[left] at (-0.14,-3.65) {\small
      $(\bar{p},p+\delta)$}; \node at (1.2,-4.45) {\small (b)};
    \end{scope}
  \end{tikzpicture}
  \caption{Schematic pictures, not to scale, with the stable coordinate
  $s$ horizontal and the unstable coordinate $u$ vertical. (a) The
  rectangle~\eqref{eq:cornerbound} in the coordinates of $\rho - q$, which
  has the origin and the critical corner $(V,-U)$ (open circle) as
  opposite corners, and the normalized remainders $w_{m}$ of~\eqref{eq:wm}
  for three values of $m$, which lie on the line $u = -s/\varepsilon$
  because $\bar{w}_{m} = -\varepsilon w_{m}$. (b) Representatives of the
  classes $\varepsilon^{k}z_{m} + R$ near the stable and unstable lines
  through $(\bar{p},p)$.} \label{fig:corner}
\end{figure}

\subsection{Closing an orbit} \label{sec:closing}
The critical orbit leaves the period-$2$ orbit of $(11/2,5/2) + \Lambda$
and arrives at the period-$2$ orbit of $(5/2,5/2) + \Lambda$. A periodic
orbit that follows the critical orbit must return from the second of these
orbits to the first, and the final graph contains the simplest return.
Consider the sequence of labels that consists of the labels
$176 + 44\sqrt{14}$, $216 + 56\sqrt{14}$ of the period-$2$ orbit of
$(5/2,5/2) + \Lambda$ up to some point and of the labels
$216 + 56\sqrt{14}$, $288 + 76\sqrt{14}$ of the period-$2$ orbit of
$(11/2,5/2) + \Lambda$ from that point on, with $176 + 44\sqrt{14}$ as the
last label of the first kind and $216 + 56\sqrt{14}$ as the first label of
the second kind. By Lemma~\ref{lem:coding}(b), this sequence codes an
orbit. By Lemma~\ref{lem:coding}(a) and~\eqref{eq:twocycle}, the point of
this orbit at which the labels change has the unstable coordinate
$(11 + 5\sqrt{14})/2$ of $(11/2,5/2)$, because the labels that follow it
are those that follow $(11/2,5/2)$, and it has the stable coordinate
$(3 - 7\sqrt{14})/2$ of $(3/2,7/2)$, because the labels that precede it
are those that precede $(3/2,7/2)$. Its coefficient point is
\[
  \Bigl(\frac{7 - \sqrt{14}}{2},\ 3 + \frac{\sqrt{14}}{7}\Bigr)
  = (1.629\ldots,\,3.534\ldots),
\]
which lies inside one of the two rectangles of Figure~\ref{fig:clusters}.
This is the second orbit of that figure. Since
\[
  \frac{11 + 5\sqrt{14}}{2} - \overline{\Bigl(\frac{3 - 7\sqrt{14}}{2}\Bigr)}
  = 4 - \sqrt{14} = \bar{\pi},
\]
the second orbit is, modulo $R$, the homoclinic orbit of $(\bar{p},p)$
through the point of displacement $\bar{\pi}$. We call $\bar{\pi}$ the
\emph{auxiliary displacement} and $(\bar{p},p + \bar{\pi})$ the
\emph{auxiliary homoclinic point}. The displacement is small, which helps
to bound the normalized remainders near this point.

Now let $m \geq 7$ be odd. We define a sequence of labels
$(\ell_{m,k})_{k \in \Z}$ of period $2m$ by
\begin{equation} \label{eq:periodiclabels}
  \begin{aligned}
    \ell_{m,k}      & = \ell_{k - m - 1}\quad(3 \leq k \leq 2m + 1),\qquad
    \ell_{m,2} = 288 + 76\sqrt{14}, \\
    \ell_{m,k + 2m} & = \ell_{m,k}\quad(k \in \Z).
  \end{aligned}
\end{equation}
One period, read from $k = 3$, consists of $m - 5$ labels of the
period-$2$ orbit of $(11/2,5/2) + \Lambda$, beginning with
$216 + 56\sqrt{14}$, the seven labels~\eqref{eq:labels}, $m - 3$ labels of
the period-$2$ orbit of $(5/2,5/2) + \Lambda$, beginning with
$176 + 44\sqrt{14}$, and the label $288 + 76\sqrt{14}$. The last two
labels of the period are $216 + 56\sqrt{14}$, $288 + 76\sqrt{14}$, so the
sequence returns along the second orbit from the period-$2$ orbit of
$(5/2,5/2) + \Lambda$ to the period-$2$ orbit of $(11/2,5/2) + \Lambda$,
and the period closes up (Figure~\ref{fig:closing}). The passage through
the critical squares is at $k = m + 1$, where $\ell_{m,m + 1} = \ell_{0}$,
and the change of labels of the second orbit is at $k = 1$, where
$\ell_{m,0} = 176 + 44\sqrt{14}$ and $\ell_{m,1} = 216 + 56\sqrt{14}$. All
the labels lie in $\pi^{5}R$.

\begin{figure}[!htbp]
  \centering
  \begin{tikzpicture}[x=1cm,y=1cm,
  every node/.style={font=\footnotesize}]
    \draw[gray!80,thick] (-4,0) ellipse (1.22 and 0.65);
    \draw[gray!80,thick] (4,0) ellipse (1.22 and 0.65);
    \node[align=center] at (-4,0) {period-$2$
    orbit\\$(\frac{11}{2},\frac{5}{2})+\Lambda$};
    \node[align=center] at (4,0)
      {period-$2$ orbit\\$(\frac{5}{2},\frac{5}{2})+\Lambda$};
    \draw[-stealth,thick] (-4.85,0.55) .. controls (-6.1,2.45) and
    (-1.9,2.25) .. (-3.05,0.52); \node[align=center] at (-4.0,2.05) {$m-5$
    periodic labels}; \draw[-stealth,thick] (3.15,0.55) .. controls
    (1.9,2.45) and (6.1,2.25) .. (4.95,0.52); \node[align=center] at
    (4.0,2.05) {$m-3$ periodic labels}; \draw[-stealth,very thick]
    (-2.73,0.32) .. controls (-1.18,0.98) and (1.18,0.98) .. (2.73,0.32);
    \node[fill=white,inner sep=2pt,align=center] at (0,1.0) {the seven
    labels~\eqref{eq:labels}\\$\ell_{-3},\ldots,\ell_{3}$};
    \draw[-stealth,very thick,densely dashed] (2.8,-0.46) .. controls
    (1.5,-1.5) and (-1.5,-1.5) .. (-2.8,-0.46); \node[fill=white,inner
    sep=2pt,align=center] at (0,-1.52) {return along the second
    orbit\\through $(\bar{p},p + \bar{\pi})$};
  \end{tikzpicture}
  \caption{How the obstructing periodic classes are obtained by closing
  the critical orbit (\S\ref{sec:closing}). One period of the label
  sequence~\eqref{eq:periodiclabels} consists of $m - 5$ labels of the
  period-$2$ orbit of $(11/2,5/2) + \Lambda$, the seven
  labels~\eqref{eq:labels}, $m - 3$ labels of the period-$2$ orbit of
  $(5/2,5/2) + \Lambda$, and the label $288 + 76\sqrt{14}$. The dashed
  arrow represents the return along the second orbit of
  \S\ref{sec:closing}, where the labels of the orbit of
  $(5/2,5/2) + \Lambda$ give way to those of the orbit of
  $(11/2,5/2) + \Lambda$. The arrows indicate successive portions of the
  label sequence, not individual edges of the graph on the uncovered
  squares. As $m$ grows, the representatives $y_{m,m + 1 + d}$ at the
  passage tend to the points $\eta_{d}$ of the critical orbit
  (Lemma~\ref{lem:positions}(b)).} \label{fig:closing}
\end{figure}

For $k \in \Z$ let
\begin{equation} \label{eq:ymk}
  y_{m,k} = \frac{1}{\varepsilon^{2m} - 1}
  \sum_{j = 0}^{2m - 1}\varepsilon^{2m - 1 - j}\ell_{m,k + j}.
\end{equation}
This is an element of $K$, and $y_{m,k + 2m} = y_{m,k}$ because the labels
have period $2m$. Multiplying~\eqref{eq:ymk} by $\varepsilon$, separating
the term with $j = 0$, and using $\ell_{m,k + 2m} = \ell_{m,k}$ gives
\begin{equation} \label{eq:ymkrecursion}
  y_{m,k + 1} = \varepsilon y_{m,k} - \ell_{m,k}\qquad(k \in \Z).
\end{equation}
Taking conjugates in~\eqref{eq:ymkrecursion} gives
$\bar{y}_{m,k + 1} = \varepsilon^{-1}\bar{y}_{m,k} - \bar{\ell}_{m,k}$, so
the coefficient points of the $y_{m,k}$, whose stable and unstable
coordinates are $(\bar{y}_{m,k},y_{m,k})$, form a sequence of points
$(\rho_{k})_{k \in \Z}$ with
$\rho_{k + 1} = \varepsilon\rho_{k} - \lambda_{k}$, where $\lambda_{k}$ is
the coefficient point of $\ell_{m,k}$. This sequence is bounded, because
it is periodic. By Lemma~\ref{lem:coding}(a), it is the orbit coded by the
labels $\ell_{m,k}$, and its coordinates are given by the
series~\eqref{eq:series}.

\begin{lemma} \label{lem:reversal}
  For every odd $m \geq 7$ and every $k \in \Z$,
  \begin{equation} \label{eq:reversal}
    y_{m,3 - k} = 8 - \bar{y}_{m,k}.
  \end{equation}
  Consequently, for every $q \in R$, the normalized remainder
  $y_{m,3 - k} - (8 - \bar{q})$ at the position $3 - k$ is
  $-\overline{(y_{m,k} - q)}$, and it has the same absolute norm and the
  same valuation as the normalized remainder $y_{m,k} - q$ at the position
  $k$.
\end{lemma}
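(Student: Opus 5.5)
The plan is to show that the sequence $c_{k} = 8 - \bar{y}_{m,3-k}$ satisfies the same recursion~\eqref{eq:ymkrecursion}, with the same labels, as the sequence $y_{m,k}$. Both sequences are periodic of period $2m$, and a periodic solution of this recursion is unique. This is the algebraic form of the reflection symmetry of Remark~\ref{rem:symmetry}, now applied to the closed label sequence~\eqref{eq:periodiclabels} instead of the critical orbit.

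\textbf{Step 1: the recursion for $c_{k}$.} Conjugate~\eqref{eq:ymkrecursion} at the index $2-k$:
\[
  \bar{y}_{m,3-k} = \varepsilon^{-1}\bar{y}_{m,2-k} - \bar{\ell}_{m,2-k}.
\]
Solving for $\bar{y}_{m,2-k}$ gives
\[
  c_{k+1} = 8 - \bar{y}_{m,2-k} = 8 - \varepsilon(8 - c_{k}) - \varepsilon\bar{\ell}_{m,2-k}.
\]
Hence $c_{k+1} = \varepsilon c_{k} - \ell'_{k}$, where
$\ell'_{k} = 8(\varepsilon - 1) + \varepsilon\bar{\ell}_{m,2-k}$.

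\textbf{Step 2: the label identity $\ell'_{k} = \ell_{m,k}$ for all $k$.} This is the main point. Both sides have period $2m$ in $k$, so it suffices to check one period.
\begin{itemize}
  \item For $3 \le k \le 2m+1$, write $\ell_{m,k} = \ell_{k-m-1}$. The index $2-k$ reduces modulo $2m$ to $m+2-k$, and $\ell_{m,m+2-k} = \ell_{1-(k-m-1)}$. So the identity becomes $\ell_{j} = 8(\varepsilon-1) + \varepsilon\bar{\ell}_{1-j}$ for the critical labels, with $j = k-m-1$.
  \item This is exactly the statement $\ell'_{k} = \ell_{k}$ of Remark~\ref{rem:symmetry}, rewritten in the notation used here. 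It reduces to finitely many exact identities in $K$: the seven labels~\eqref{eq:labels} paired as $\ell_{j} \leftrightarrow \ell_{1-j}$, and the pairings between the two period-$2$ label patterns. For example, one pairing sends $176 + 44\sqrt{14}$ to $288 + 76\sqrt{14}$, and $216 + 56\sqrt{14}$ is sent to itself.
  \item The remaining positions are $k = 0, 1, 2$. Here $k = 1$ pairs with itself, and $k = 0$ pairs with $k = 2$, that is, $176 + 44\sqrt{14}$ with $288 + 76\sqrt{14}$. This is the same pattern as before.
  \item Since $m$ is odd, the parities of the indices line up, so the period-$2$ tails pair with each other correctly. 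I expect this bookkeeping of indices modulo $2m$ and of parities to be the only delicate part. The arithmetic itself is a handful of direct calculations in $K$.
\end{itemize}

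\textbf{Step 3: uniqueness of the periodic solution.} Let $d_{k} = c_{k} - y_{m,k}$. Then $d_{k+1} = \varepsilon d_{k}$, and $d_{k}$ has period $2m$. Hence $(\varepsilon^{2m} - 1)d_{k} = 0$, so $d_{k} = 0$. This gives~\eqref{eq:reversal}. Alternatively, Lemma~\ref{lem:coding}(a) yields the same conclusion for bounded sequences.

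\textbf{Consequence.} For $q \in R$, equation~\eqref{eq:reversal} gives
\[
  y_{m,3-k} - (8 - \bar{q}) = \bar{q} - \bar{y}_{m,k} = -\overline{(y_{m,k} - q)},
\]
and $8 - \bar{q} \in R$. Negation and conjugation preserve $|N|$. They also preserve $\ve$, by Lemma~\ref{lem:valuation}(a), since $\operatorname{ord}_{2}$ is insensitive to the signs of the coefficients.
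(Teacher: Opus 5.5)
Your Steps 1 and 3 and the final consequence are the paper's argument, and your treatment of $k=0,1,2$ is correct. The problem is the generic case of Step 2, which is off by one. Since $m\not\equiv 0 \pmod{2m}$, the index $2-k$ is congruent to $2m+2-k$ modulo $2m$, not to $m+2-k$. For $3\le k\le 2m-1$ this gives $\ell_{m,2-k}=\ell_{m+1-k}=\ell_{-j}$ with $j=k-m-1$. So what must be checked is
\[
  8(\varepsilon-1)+\varepsilon\bar{\ell}_{-j}=\ell_{j}\qquad(j\in\Z),
\]
which is the rule $\ell'_k=\ell_k$ of Remark~\ref{rem:symmetry}, pairing $\ell_j$ with $\ell_{-j}$. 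It is not the identity $\ell_j=8(\varepsilon-1)+\varepsilon\bar{\ell}_{1-j}$ that you wrote, and that identity is false. At $j=0$ its right side is $216+56\sqrt{14}$, because this label is fixed by $\ell\mapsto 8(\varepsilon-1)+\varepsilon\bar{\ell}$, whereas $\ell_0=280+72\sqrt{14}$. In the tails it pairs the even-index label $\ell_j=176+44\sqrt{14}$ ($j\ge 6$) with the odd-index label $\ell_{1-j}=216+56\sqrt{14}$. So your claim that ``the parities line up'' fails exactly there, since $j$ and $1-j$ always have opposite parity.

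The slip comes from carrying the reflection of points, $\eta_k\leftrightarrow\eta_{1-k}$, over to labels unchanged. The label $\ell_k$ records the step from $\eta_k$ to $\eta_{k+1}$. The reflection reverses time, so it sends this step to the step from $\eta_{-k}$ to $\eta_{1-k}$, which is $\ell_{-k}$. This is also why your own Step 1 correctly pairs $\ell_{m,k}$ with $\ell_{m,2-k}$ rather than with $\ell_{m,3-k}$. With the pairing $j\leftrightarrow -j$, indices keep their parity, and the check reduces to three facts about the involution $\ell\mapsto 8(\varepsilon-1)+\varepsilon\bar{\ell}$. It swaps $176+44\sqrt{14}$ and $288+76\sqrt{14}$, which handles even $|j|\ge 4$. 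It fixes $216+56\sqrt{14}$, which handles odd $|j|\ge 5$. And it maps $\ell_0$ to itself and $\ell_{-i}$ to $\ell_i$ for $i=1,2,3$. With this correction your proof is complete and coincides with the paper's.
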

\begin{proof}
  By Remark~\ref{rem:symmetry}, the labels of the critical orbit satisfy
  \begin{equation} \label{eq:labelrule}
    8(\varepsilon - 1) + \varepsilon\bar{\ell}_{j} = \ell_{-j}\qquad(j \in \Z),
  \end{equation}
  because $8(\varepsilon - 1) = 112 + 32\sqrt{14}$ and
  $\varepsilon\bar{\ell}_{j} = \overline{\varepsilon^{-1}\ell_{j}}$ has
  the coefficient point $(a,-b)$ when $\varepsilon^{-1}\ell_{j}$ has the
  coefficient point $(a,b)$. We first show that
  \begin{equation} \label{eq:periodicrule}
    8(\varepsilon - 1) + \varepsilon\bar{\ell}_{m,2 - k} = \ell_{m,k}
    \qquad(k \in \Z).
  \end{equation}
  Both sides have period $2m$ in $k$, so let $0 \leq k < 2m$. If
  $3 \leq k \leq 2m - 1$, then $2 - k \equiv 2m + 2 - k \pmod{2m}$ with
  $3 \leq 2m + 2 - k \leq 2m - 1$, so $\ell_{m,2 - k} = \ell_{m + 1 - k}$
  by~\eqref{eq:periodiclabels}, and~\eqref{eq:labelrule} gives
  $8(\varepsilon - 1) + \varepsilon\bar{\ell}_{m,2 - k} = \ell_{k - m - 1}
  = \ell_{m,k}$. For $k = 0$, $1$, $2$ we have
  $\ell_{m,2} = 288 + 76\sqrt{14} = \ell_{-4}$, $\ell_{m,1} = \ell_{m}$,
  and $\ell_{m,0} = \ell_{m - 1}$, and~\eqref{eq:labelrule} gives
  $\ell_{4}$, $\ell_{-m}$, and $\ell_{1 - m}$. Since $4$, $m - 1$, and
  $1 - m$ are even and $m$ and $-m$ are odd, the definition of $\ell_{k}$
  for $|k| \geq 4$ in \S\ref{sec:etadef} gives
  $\ell_{4} = \ell_{m - 1} = \ell_{m,0}$,
  $\ell_{-m} = 216 + 56\sqrt{14} = \ell_{m} = \ell_{m,1}$, and
  $\ell_{1 - m} = 288 + 76\sqrt{14} = \ell_{m,2}$. This
  proves~\eqref{eq:periodicrule}.

  Now let $y'_{k} = 8 - \bar{y}_{m,3 - k}$ for $k \in \Z$. Conjugating
  $y_{m,3 - k} = \varepsilon y_{m,2 - k} - \ell_{m,2 - k}$, which
  is~\eqref{eq:ymkrecursion}, and multiplying by $\varepsilon$ gives
  $\bar{y}_{m,2 - k} = \varepsilon\bar{y}_{m,3 - k} + \varepsilon\bar{\ell}_{m,2 - k}$,
  and therefore
  \[
    y'_{k + 1} = 8 - \bar{y}_{m,2 - k}
    = \varepsilon(8 - \bar{y}_{m,3 - k})
    - \bigl(8(\varepsilon - 1) + \varepsilon\bar{\ell}_{m,2 - k}\bigr)
    = \varepsilon y'_{k} - \ell_{m,k}
  \]
  by~\eqref{eq:periodicrule}. Thus the sequences $(y'_{k})$ and
  $(y_{m,k})$ satisfy the same recursion~\eqref{eq:ymkrecursion}, and both
  have period $2m$. Their difference $D_{k} = y'_{k} - y_{m,k}$ satisfies
  $D_{k + 1} = \varepsilon D_{k}$, so
  $D_{k} = D_{k + 2m} = \varepsilon^{2m}D_{k}$, and since
  $\varepsilon^{2m} \neq 1$ we have $D_{k} = 0$. This
  proves~\eqref{eq:reversal}. Subtracting $8 - \bar{q}$ from both sides
  of~\eqref{eq:reversal} gives
  $y_{m,3 - k} - (8 - \bar{q}) = -\overline{(y_{m,k} - q)}$, and
  conjugation and negation preserve absolute norms and valuations. This
  proves the lemma.
\end{proof}

The reflection $(x,y) \mapsto (8 - x,y)$ of \S\ref{sec:squares}, which
sends the point with coordinates $(s,u)$ to the point with coordinates
$(8 - u,8 - s)$, therefore maps the orbit of the representatives onto
itself, exchanging the positions $k$ and $3 - k$, just as it maps the
critical orbit onto itself, exchanging $\eta_{k}$ and $\eta_{1 - k}$.

The next lemma gives the representatives in closed form. It compares the
orbit of $z_{m}$ with the critical orbit, which has the same labels for
$2m - 1$ consecutive steps.

\begin{lemma} \label{lem:closing}
  Let $m \geq 7$ be odd.
  \begin{enumerate}
    \item[(a)] For $3 \leq k \leq 2m + 2$,
      \begin{equation} \label{eq:representative}
        y_{m,k} = (\eta_{k - m - 1})_{u}
        + \frac{\bar{\pi}\varepsilon^{k - m - 1}(\varepsilon^{m} - n)}
        {\varepsilon^{2m} - 1},\qquad
        \bar{y}_{m,k} = (\eta_{k - m - 1})_{s}
        - \frac{\pi\varepsilon^{m + 1 - k}(\varepsilon^{m} - n)}
        {\varepsilon^{2m} - 1}.
      \end{equation}
    \item[(b)] Let $z_{m} = y_{m,0} - 1 - 2\sqrt{14}$. Then
      \begin{equation} \label{eq:family}
        z_{m} = p + \frac{\bar{\pi}\varepsilon^{m - 1}(\varepsilon^{m} - n)}
        {\varepsilon^{2m} - 1},
      \end{equation}
      and $y_{m,k} \in \varepsilon^{k}z_{m} + R$ for every $k \in \Z$. In
      particular the class $z_{m} + R$ has period dividing $2m$.
    \item[(c)] The normalized remainders $r_{m} = y_{m,1} - 4\sqrt{14}$
      and $w_{m} = y_{m,m + 1} - q$ of the classes $\varepsilon z_{m} + R$
      and $\varepsilon^{m + 1}z_{m} + R$, where $q = 4 + 4\sqrt{14}$, are
      \begin{equation} \label{eq:rm}
        r_{m} = \bar{p} + \frac{\bar{\pi}\varepsilon^{m}(\varepsilon^{m} - n)}
        {\varepsilon^{2m} - 1}
      \end{equation}
      and
      \begin{equation} \label{eq:wm}
        w_{m} = -U + \frac{\bar{\pi}(\varepsilon^{m} - n)}{\varepsilon^{2m} - 1},
        \qquad
        \bar{w}_{m} = V - \frac{\pi(\varepsilon^{m} - n)}{\varepsilon^{2m} - 1}.
      \end{equation}
  \end{enumerate}
\end{lemma}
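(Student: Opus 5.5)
The plan is to compare the periodic orbit $(y_{m,k})$ with the critical orbit through the series of Lemma~\ref{lem:coding}. By \eqref{eq:ymkrecursion} and $\eta_{k+1}=\varepsilon\eta_k-\lambda_k$, for $3\le k\le 2m+1$ the labels $\ell_{m,k}$ and $\ell_{k-m-1}$ agree. So the differences $d_k = y_{m,k}-(\eta_{k-m-1})_u$ satisfy $d_{k+1}=\varepsilon d_k$ on $3\le k\le 2m+1$, and hence $d_k=\varepsilon^{k-m-1}d_{m+1}$ for $3\le k\le 2m+2$. The conjugates behave the same way with $\varepsilon^{-1}$, giving $\bar y_{m,k}-(\eta_{k-m-1})_s=\varepsilon^{m+1-k}e$ for a constant $e$. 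Thus (a) reduces to identifying the two constants $d_{m+1}$ and $e$. I would do this at the ends of the window, $k=2m+2$ and $k=3$, where the critical orbit has the explicit tails \eqref{eq:etatail}.

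At $k=2m+2$ the point $\eta_{m+1}$ has index $m+1\ge 8$, which is even. By \eqref{eq:etatail} its unstable coordinate is $(5+5\sqrt{14})/2$ exactly. Periodicity gives $y_{m,2m+2}=y_{m,2}$, and $y_{m,3}=\varepsilon y_{m,2}-(288+76\sqrt{14})$. At $k=3$ we have $\eta_{2-m}$ with $2-m\le -5$ odd, so $(\eta_{2-m})_u=(11+5\sqrt{14})/2+\varepsilon^{2-m}\delta$.

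Writing $y_{m,2}=(5+5\sqrt{14})/2+\varepsilon^{m+1}d_{m+1}$ and $y_{m,3}=(\eta_{2-m})_u+\varepsilon^{2-m}d_{m+1}$, I substitute into the recursion. Using the period-$2$ identity $\varepsilon\cdot(5+5\sqrt{14})/2-(176+44\sqrt{14})=(3+7\sqrt{14})/2$ together with the auxiliary displacement relation, the constant terms should combine to $\bar\pi$ up to a power of $\varepsilon$. The result is a linear equation
\[
d_{m+1}(\varepsilon^{m+2}-\varepsilon^{2-m})=\varepsilon^{2-m}\delta+c,
\]
where $c$ is an explicit constant, which I expect to be $-\varepsilon\bar\pi$ after the $\varepsilon^{\pm}$ bookkeeping. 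With $\delta=-n\bar\pi$, solving gives $d_{m+1}=\bar\pi(\varepsilon^m-n)/(\varepsilon^{2m}-1)$. The stable constant is obtained the same way, or more cheaply by conjugating and using $\pi=\varepsilon\bar\pi$ and $\bar\delta=\varepsilon\delta$. A cleaner alternative is to apply Lemma~\ref{lem:reversal} and the symmetry $\eta_k\leftrightarrow\eta_{1-k}$ of Remark~\ref{rem:symmetry}. These turn the unstable formula into the stable one via $y_{m,3-k}=8-\bar y_{m,k}$ and $(\eta_{1-k})_u=8-(\eta_k)_s$. This is a useful consistency check on signs.

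For (b), I evaluate $y_{m,0}=y_{m,2m}$ using (a) at $k=2m$. Here $\eta_{m-1}$ has index $m-1$, which is even and at least $6$. Its unstable coordinate is $(5+5\sqrt{14})/2=p+1+2\sqrt{14}$, which gives \eqref{eq:family} directly. For the class statement, \eqref{eq:ymkrecursion} shows that $y_{m,k+1}-\varepsilon y_{m,k}\in R$, and induction in both directions gives $y_{m,k}\in\varepsilon^k z_m+R$. Periodicity $y_{m,k+2m}=y_{m,k}$ then shows that $(\varepsilon^{2m}-1)z_m\in R$.

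For (c), $w_m$ is (a) at $k=m+1$ combined with \eqref{eq:UV}: $(\eta_0)_u-q=-U$ and $(\eta_0)_s-\bar q=V$. For $r_m$ I use $y_{m,1}=\varepsilon y_{m,0}-\ell_{m,0}$ and \eqref{eq:Ek}, that is, $\varepsilon p\in p+R$. A short computation then gives $\varepsilon z_m+\varepsilon(1+2\sqrt{14})-(176+44\sqrt{14})-4\sqrt{14}=\bar p+\varepsilon\cdot(\text{tail})$ after using $p-\bar p=\sqrt{14}$ and $(\varepsilon-1)p$ from Lemma~\ref{lem:pclass}. This must be checked exactly in $K$.

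The main obstacle is the bookkeeping in the constant $c$ at the junction $k=2\to3$: getting the exponent of $\varepsilon$ and the sign of $\bar\pi$ exactly right. I would verify the final formula numerically for $m=7$ against \eqref{eq:ymk}, as an exact comparison in $K$.
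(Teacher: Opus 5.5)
Your route is the paper's. On $3\le k\le 2m+2$ the differences from the critical orbit are multiplied by $\varepsilon$ (by $\varepsilon^{-1}$ in the stable coordinate) at each step, and the constant is fixed by closing the orbit at the step from $k=2$ to $k=3$, using the label $288+76\sqrt{14}$ and the tails of the critical orbit. Parts (b) and (c) are fine as you describe them. Backward induction works because $\varepsilon$ is a unit, and your derivation of $r_{m}$ from $y_{m,1}=\varepsilon y_{m,0}-\ell_{m,0}$ checks out; the paper instead applies (a) at $k=2m+1$.

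However, the junction constant in (a) is not $-\varepsilon\bar\pi$. The identity you need is
\[
  \varepsilon\,\frac{5+5\sqrt{14}}{2}-(288+76\sqrt{14})
  =\frac{11+5\sqrt{14}}{2}-\varepsilon^{2}\bar\pi .
\]
It follows from your period-$2$ identity, from $(3+7\sqrt{14})/2=(11+5\sqrt{14})/2-\bar\pi$, and from $8(\varepsilon-1)=(\varepsilon^{2}-1)\bar\pi$, that is, $(\varepsilon+1)\bar\pi=8$. So $c=\varepsilon^{2}\bar\pi$, and
\[
  d_{m+1}=\frac{\delta+\varepsilon^{m}\bar\pi}{\varepsilon^{2m}-1}=\frac{\bar\pi(\varepsilon^{m}-n)}{\varepsilon^{2m}-1}.
\]
With your guessed $c$, the same equation would give $-\bar\pi(\varepsilon^{m-1}+n)/(\varepsilon^{2m}-1)$. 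So the value you state does not follow from what you wrote.

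Second, the shortcut for the stable constant does not work as stated: $e$ is not the conjugate of $d_{m+1}$. The point $\eta_{j}$ is not the coefficient point of an element of $K$. Since $\eta_{j}\equiv(\bar p,p+\varepsilon^{j}\delta)\pmod R$ (\S\ref{sec:corner}), one has $(\eta_{j})_{s}=\overline{(\eta_{j})_{u}}+n\pi\varepsilon^{-j}$. Correspondingly,
\[
  e=\bar d_{m+1}-n\pi=-\frac{\pi(\varepsilon^{m}-n)}{\varepsilon^{2m}-1}.
\]
Conjugating the unstable formula therefore gives the right answer only if you also insert this relation. In the stable computation the displacement enters at the other end of the window, through $(\eta_{m+1})_{s}=(5-5\sqrt{14})/2-\varepsilon^{-m-1}\bar\delta$, while $(\eta_{2-m})_{s}=(11-5\sqrt{14})/2$ holds exactly. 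The paper does this junction computation separately, using the conjugate identity $\varepsilon^{-1}(5-5\sqrt{14})/2-(288-76\sqrt{14})=(11-5\sqrt{14})/2-\varepsilon^{-2}\pi$.

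Your reversal alternative is a valid proof, not merely a check, and it is not circular, since Lemma~\ref{lem:reversal} precedes this lemma. Take the unstable formula at the position $2m+3-k$, together with $y_{m,3-k}=8-\bar y_{m,k}$ and $(\eta_{1-i})_{s}=8-(\eta_{i})_{u}$. This gives the stable formula for $3\le k\le 2m$, and the stable recursion extends it to $k=2m+1$ and $k=2m+2$.
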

\begin{proof}
  (a) For $3 \leq k \leq 2m + 1$ we have $\ell_{m,k} = \ell_{k - m - 1}$,
  so the representatives and the critical orbit satisfy the same
  recursions, $y_{m,k + 1} = \varepsilon y_{m,k} - \ell_{m,k}$ and
  $(\eta_{k - m})_{u} = \varepsilon(\eta_{k - m - 1})_{u} - \ell_{k - m - 1}$,
  for these $k$. Hence the differences
  $D_{k} = y_{m,k} - (\eta_{k - m - 1})_{u}$ satisfy
  $D_{k + 1} = \varepsilon D_{k}$ for $3 \leq k \leq 2m + 1$, and
  $D_{k} = \varepsilon^{k - 3}D_{3}$ for $3 \leq k \leq 2m + 2$. To find
  $D_{3}$ we close the orbit. Since $m + 1$ is even,
  $(\eta_{m + 1})_{u} = (5 + 5\sqrt{14})/2$ by \S\ref{sec:etadef}, so
  $y_{m,2} = y_{m,2m + 2} = (5 + 5\sqrt{14})/2 + \varepsilon^{2m - 1}D_{3}$.
  The recursion~\eqref{eq:ymkrecursion} at $k = 2$ and the direct
  calculation
  \[
    \varepsilon\,\frac{5 + 5\sqrt{14}}{2} - (288 + 76\sqrt{14})
    = \frac{11 + 5\sqrt{14}}{2} - (116 + 31\sqrt{14})
    = \frac{11 + 5\sqrt{14}}{2} - \varepsilon^{2}\bar{\pi}
  \]
  give
  $y_{m,3} = (11 + 5\sqrt{14})/2 - \varepsilon^{2}\bar{\pi} + \varepsilon^{2m}D_{3}$.
  Since $2 - m$ is odd and at most $-5$, \eqref{eq:etatail} gives
  $(\eta_{2 - m})_{u} = (11 + 5\sqrt{14})/2 + \varepsilon^{2 - m}\delta$.
  Subtracting, and using $\delta = -n\bar{\pi}$,
  \[
    D_{3} = y_{m,3} - (\eta_{2 - m})_{u}
    = -\varepsilon^{2}\bar{\pi} + n\bar{\pi}\varepsilon^{2 - m} + \varepsilon^{2m}D_{3},
    \qquad\text{so}\qquad
    D_{3} = \frac{\bar{\pi}\varepsilon^{2 - m}(\varepsilon^{m} - n)}{\varepsilon^{2m} - 1}.
  \]
  Multiplying by $\varepsilon^{k - 3}$ gives the first formula
  in~\eqref{eq:representative}.

  The stable coordinates are treated in the same way. Conjugating the
  recursions gives
  $\bar{y}_{m,k + 1} = \varepsilon^{-1}\bar{y}_{m,k} - \bar{\ell}_{m,k}$
  and
  $(\eta_{k - m})_{s} = \varepsilon^{-1}(\eta_{k - m - 1})_{s} - \bar{\ell}_{k - m - 1}$,
  so the differences $E_{k} = \bar{y}_{m,k} - (\eta_{k - m - 1})_{s}$
  satisfy $E_{k + 1} = \varepsilon^{-1}E_{k}$ for $3 \leq k \leq 2m + 1$,
  and $E_{k} = \varepsilon^{3 - k}E_{3}$ for $3 \leq k \leq 2m + 2$. By
  \eqref{eq:etatail}, $(\eta_{m + 1})_{s} = (5 - 5\sqrt{14})/2
  - \varepsilon^{-m - 1}\bar{\delta}$, so
  $\bar{y}_{m,2} = \bar{y}_{m,2m + 2} = (5 - 5\sqrt{14})/2
  - \varepsilon^{-m - 1}\bar{\delta} + \varepsilon^{1 - 2m}E_{3}$. The conjugate of the direct calculation above
  gives $\bar{y}_{m,3} = \varepsilon^{-1}\bar{y}_{m,2} - (288 - 76\sqrt{14})
  = (11 - 5\sqrt{14})/2 - \varepsilon^{-2}\pi - \varepsilon^{-m - 2}\bar{\delta}
  + \varepsilon^{-2m}E_{3}$, and
  $(\eta_{2 - m})_{s} = (11 - 5\sqrt{14})/2$ by \S\ref{sec:etadef},
  because $2 - m$ is odd. Subtracting, and using $\bar{\delta} = -n\pi$,
  \[
    E_{3} = -\varepsilon^{-2}\pi + n\pi\varepsilon^{-m - 2} + \varepsilon^{-2m}E_{3},
    \qquad\text{so}\qquad
    E_{3} = -\frac{\pi\varepsilon^{m - 2}(\varepsilon^{m} - n)}{\varepsilon^{2m} - 1}.
  \]
  Multiplying by $\varepsilon^{3 - k}$ gives the second formula
  in~\eqref{eq:representative}.

  (b) Apply~\eqref{eq:representative} with $k = 2m$. Since $m - 1$ is even
  and at least $6$, $(\eta_{m - 1})_{u} = (5 + 5\sqrt{14})/2$, and
  $(5 + 5\sqrt{14})/2 - 1 - 2\sqrt{14} = p$. This
  proves~\eqref{eq:family}. For $k \geq 0$, repeated use
  of~\eqref{eq:ymkrecursion} gives
  $y_{m,k} = \varepsilon^{k}y_{m,0} - (\varepsilon^{k - 1}\ell_{m,0} + \cdots + \ell_{m,k - 1})$,
  and the labels and $y_{m,0} - z_{m} = 1 + 2\sqrt{14}$ lie in $R$, so
  $y_{m,k} \in \varepsilon^{k}z_{m} + R$. For $k < 0$ the same follows
  from $y_{m,k} = y_{m,k + 2mj}$ with $j$ large, because
  $(\varepsilon^{2mj} - 1)z_{m} \in R$ by the case $k = 2mj$. In
  particular $(\varepsilon^{2m} - 1)z_{m} \in R$.

  (c) Apply~\eqref{eq:representative} with $k = 2m + 1$ and with
  $k = m + 1$. Since $m$ is odd, $(\eta_{m})_{u} = (3 + 7\sqrt{14})/2$,
  and $(3 + 7\sqrt{14})/2 - 4\sqrt{14} = \bar{p}$, which
  proves~\eqref{eq:rm}. At $k = m + 1$ we have $(\eta_{0})_{u} - q = -U$
  and $(\eta_{0})_{s} - \bar{q} = V$ by~\eqref{eq:UV}, which
  proves~\eqref{eq:wm}. This proves the lemma.
\end{proof}

The formula~\eqref{eq:representative} shows what the orbit of $z_{m}$
looks like. At the positions $k = m + 1 + d$ with $d$ fixed, the
representatives tend to the points $\eta_{d}$ of the critical orbit as
$m \to \infty$, because the two fractions tend to $0$. At the positions
$k = 1$ and $k = 2$, which are the positions $2m + 1$ and $2m + 2$, the
fractions tend to $\bar{\pi}$ and $\varepsilon\bar{\pi} = \pi$ in the
unstable coordinate and to $0$ in the stable coordinate, and the
representatives tend to the points
\begin{equation} \label{eq:secondorbitpoints}
  \Bigl(\frac{3 - 7\sqrt{14}}{2},\ \frac{11 + 5\sqrt{14}}{2}\Bigr)
  \qquad\text{and}\qquad
  \Bigl(\frac{5 - 5\sqrt{14}}{2},\ \frac{13 + 7\sqrt{14}}{2}\Bigr)
\end{equation}
of the second orbit, in the coordinates $(s,u)$. Here
$(13 + 7\sqrt{14})/2 = (5 + 5\sqrt{14})/2 + \pi$. The orbit therefore
stays near the class of $p$ except for two short excursions, one near the
critical corner and one near the homoclinic point
$(\bar{p},p + \bar{\pi})$. Figure~\ref{fig:corner}(b) shows the two
excursions modulo $R$. Between them the representatives have the form
$(\bar{p} + a,p + b)$ modulo $R$, and by~\eqref{eq:Ek} multiplication by
$\varepsilon$ preserves the product $ab$, so they lie on the hyperbolas
drawn in gray. The normalized remainders $w_{m}$ of~\eqref{eq:wm} satisfy
$\bar{w}_{m} = -\varepsilon w_{m}$, because $V = \varepsilon U$ and
$\pi = \varepsilon\bar{\pi}$, so in the coordinates $(s,u)$ they lie on
the line $u = -s/\varepsilon$ through the origin and the critical corner,
and they converge to the point $(V,-U)$ (Figure~\ref{fig:corner}(a)).

\subsection{Choosing the periods} \label{sec:periods}
We now choose $m$ so that $z_{m}$ has nonnegative valuation and the
excursion near the auxiliary homoclinic point $(\bar{p},p + \bar{\pi})$
supplies no normalized remainder that is reducing at weights near $T$.
Proposition~\ref{prop:limits} below shows that two normalized remainders
near the auxiliary homoclinic point matter, the normalized remainder
$r_{m}$ of~\eqref{eq:rm} and its reflection. Their absolute norms tend to
$(9 - 2\sqrt{14})/4 = 0.379\ldots$, and their valuations are even, by the
next lemma. Since $(T/2)^{h} = \nu^{-h/4}$, we have
$((9 - 2\sqrt{14})/4)(T/2)^{h} > 1$ if and only if
$((9 - 2\sqrt{14})/4)^{4} > \nu^{h}$. Exact comparison in $K$ gives
$((9 - 2\sqrt{14})/4)^{4} < \nu^{8}$ and
$((9 - 2\sqrt{14})/4)^{4} > \nu^{10}$. Since $\nu < 1$, an even valuation
$h \leq 8$ is therefore too small, and $h = 10$ suffices. We therefore
need an even valuation of at least $10$, and this leads to a congruence
modulo $2^{8} = 256$.

\begin{lemma} \label{lem:zm}
  Let $m \geq 7$ be odd, write $\varepsilon^{m} = A_{m} + B_{m}\sqrt{14}$,
  and let $r_{m}$ be the normalized remainder~\eqref{eq:rm}. Then
  \begin{equation} \label{eq:rmexplicit}
    r_{m} = \frac{\bar{\pi}(A_{m} - 7B_{m} - n)}{2B_{m}\sqrt{14}},\qquad
    \bar{r}_{m} = -\varepsilon r_{m},\qquad
    \ve(r_{m}) = 2\operatorname{ord}_{2}(A_{m} - 7B_{m} - n) - 6.
  \end{equation}
  Furthermore, $\ve(r_{m}) \geq 10$ if and only if
  \begin{equation} \label{eq:residue}
    A_{m} - 7B_{m} \equiv n \pmod{256},
  \end{equation}
  and this holds if and only if $m \equiv 33 \pmod{64}$. For such $m$,
  every representative satisfies $\ve(y_{m,k}) \geq 5$, and
  $\ve(z_{m}) = 0$.
\end{lemma}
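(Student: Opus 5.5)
The proof has three parts. First, simplify \eqref{eq:rm} into the closed form \eqref{eq:rmexplicit} and read off the valuation. Second, turn the condition $\ve(r_{m}) \geq 10$ into the congruence \eqref{eq:residue} and then into a condition on $m$ modulo $64$. Third, propagate valuations along the orbit using Lemma~\ref{lem:periodic}(c).

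\textbf{The closed form.} Since $\varepsilon^{-m} = A_{m} - B_{m}\sqrt{14}$, we have
\[
  \frac{\varepsilon^{m}}{\varepsilon^{2m} - 1} = \frac{1}{\varepsilon^{m} - \varepsilon^{-m}} = \frac{1}{2B_{m}\sqrt{14}}.
\]
Hence \eqref{eq:rm} becomes $r_{m} = \bar{p} + \bar{\pi}(\varepsilon^{m} - n)/(2B_{m}\sqrt{14})$. Write $\varepsilon^{m} - n = (A_{m} - 7B_{m} - n) + B_{m}(7 + \sqrt{14})$. The second piece contributes $\bar{\pi}(7 + \sqrt{14})/(2\sqrt{14})$. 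A short calculation with $\bar{\pi} = 4 - \sqrt{14}$ shows that this equals $-\bar{p}$, so it cancels the leading $\bar{p}$ and leaves the first formula of \eqref{eq:rmexplicit}.

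\textbf{Conjugate and valuation.} Conjugation sends $\bar{\pi} \mapsto \pi = \varepsilon\bar{\pi}$ and $\sqrt{14} \mapsto -\sqrt{14}$, and it fixes the rational factor $A_{m} - 7B_{m} - n$. This gives $\bar{r}_{m} = -\varepsilon r_{m}$. For the valuation we use Lemma~\ref{lem:valuation}: $\ve(\bar{\pi}) = 1$, $\ve(2) = 2$, $\ve(\sqrt{14}) = 1$, and $\ve(B_{m}) = 2\operatorname{ord}_{2}(B_{m})$. So we must show $\operatorname{ord}_{2}(B_{m}) = 2$ for odd $m$. I would prove this by induction in steps of $2$, using $\varepsilon^{2} = 449 + 120\sqrt{14}$: one finds $B_{m+2} = 449B_{m} + 120A_{m}$ with $A_{m}$ odd, so $B_{m+2} \equiv B_{m} + 8 \cdot(\text{odd}) \pmod{16}$ starting from $B_{1} = 4$. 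Thus $\ve(2B_{m}\sqrt{14}) = 2 + 4 + 1 = 7$, and
\[
  \ve(r_{m}) = 1 + 2\operatorname{ord}_{2}(A_{m} - 7B_{m} - n) - 7 = 2\operatorname{ord}_{2}(A_{m} - 7B_{m} - n) - 6.
\]
In particular $\ve(r_{m}) \geq 10$ if and only if $\operatorname{ord}_{2}(A_{m} - 7B_{m} - n) \geq 8$, which is \eqref{eq:residue}.

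\textbf{Reduction to $m \bmod 64$.} This is the main obstacle, since it is an arithmetic fact about powers of $\varepsilon$ modulo $256$. I would first show that the pair $(A_{m},B_{m}) \bmod 256$ depends only on $m \bmod 64$. Because $\ve(\varepsilon - 1) = 2$, the valuation of $\varepsilon^{2^{j}} - 1$ rises by $2$ with each squaring. So $\varepsilon^{64} \equiv 1$ modulo $\pi^{16}R = 256R$; I would check this directly by repeated squaring of $\varepsilon^{2}$ modulo $256$ in $\Z[\sqrt{14}]$. Then I would tabulate $A_{m} - 7B_{m} \bmod 256$ for the $32$ odd residues of $m$ modulo $64$ and compare with $n = 104051 \equiv 115 \pmod{256}$, verifying that only $m \equiv 33$ matches. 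This is an exact finite computation. A cleaner alternative uses the $2$-adic logarithm: $A_{m} - 7B_{m}$ moves through a cyclic progression of odd residues, and the match is forced to a single class. Either way, I expect to present it as a finite check of the same kind as the other exact comparisons in the paper.

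\textbf{Valuations along the orbit.} Assume $m \equiv 33 \pmod{64}$. Then $y_{m,1} = r_{m} + 4\sqrt{14}$ with $\ve(r_{m}) \geq 10$ and $\ve(4\sqrt{14}) = 5$, so $\ve(y_{m,1}) \geq 5$. Every label $\ell_{m,k}$ lies in $\pi^{5}R$, so the recursion \eqref{eq:ymkrecursion} together with Lemma~\ref{lem:periodic}(c) gives $\ve(y_{m,k}) \geq 5$ for all $k \geq 1$. Periodicity $y_{m,k + 2m} = y_{m,k}$ extends this to every $k$. Finally $z_{m} = y_{m,0} - (1 + 2\sqrt{14})$, and $\ve(1 + 2\sqrt{14}) = 0 < 5 \leq \ve(y_{m,0})$. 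By Lemma~\ref{lem:valuation}(b), $\ve(z_{m}) = 0$.
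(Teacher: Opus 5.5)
Your closed form, the conjugate relation, the valuation count (including your mod-$16$ induction for $\operatorname{ord}_{2}(B_{m}) = 2$), and the propagation of $\ve(y_{m,k}) \geq 5$ are all correct. For the last step you use periodicity where the paper uses the backward recursion; otherwise these parts are essentially the paper's argument.

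\textbf{A faulty justification.} In your third step the stated reason is wrong, although the conclusion survives. If the valuation of $\varepsilon^{2^{j}} - 1$ started at $\ve(\varepsilon - 1) = 2$ and rose by $2$ with each squaring, you would only get $\ve(\varepsilon^{64} - 1) \geq 14$. That is $\varepsilon^{64} \equiv 1 \pmod{128R}$, which does not make $(A_{m},B_{m}) \bmod 256$ periodic in $m \bmod 64$.

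The first squaring gains more than $2$. Since $\ve(\varepsilon + 1) = \ve(16 + 4\sqrt{14}) = 5$, we get $\ve(\varepsilon^{2} - 1) = 7$, which is the valuation of $448 + 120\sqrt{14}$ noted in the paper. After that, $\ve(\varepsilon^{2^{j}} + 1) = 2$, so $\ve(\varepsilon^{2^{j}} - 1) = 2j + 5$ for $j \geq 1$. This gives $17 \geq 16$ at $j = 6$ (and only $15$ at $j = 5$, so $64$ is exactly the period modulo $256R$). Replace your heuristic with this computation, or with the direct squaring check you mention.

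\textbf{Comparison with the paper's route.} For the congruence condition, you reduce to a tabulation of the $32$ odd residues of $m$ modulo $64$. The paper derives a closed form instead. It writes $m = 2k + 1$ and $\varepsilon^{m} = \varepsilon(1 + \omega)^{k}$ with $\omega = \varepsilon^{2} - 1$. It then uses the congruences $\omega^{3} \equiv 0$, $\varepsilon\omega \equiv 128 + 8\sqrt{14}$, and $\varepsilon\omega^{2} \equiv 128 \pmod{256R}$. Together these give $B_{m} \equiv 4 + 8k$ and $A_{m} - 7B_{m} \equiv -13 + 8k + 64k^{2} \pmod{256}$. With $n \equiv 115 \pmod{256}$, the condition becomes $k + 8k^{2} \equiv 16 \pmod{32}$. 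The map $k \mapsto k + 8k^{2}$ is a bijection modulo $32$, since differences factor as $(k_{1} - k_{2})(1 + 8(k_{1} + k_{2}))$ and the second factor is odd. So $k \equiv 16$, that is, $m \equiv 33 \pmod{64}$, is the unique solution.

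The paper's derivation is a short hand proof. It yields $\operatorname{ord}_{2}(B_{m}) = 2$ as a by-product, so no separate induction is needed, and it adds no new finite verification to the list of computer-checked steps. Your table would succeed, because the claimed answer is correct. Your "$2$-adic logarithm" alternative, if written out, is essentially the paper's truncated binomial expansion.
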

\begin{proof}
  For the explicit formula, note that $\varepsilon^{2m} - 1 = \varepsilon^{m}(\varepsilon^{m} - \varepsilon^{-m})
  = 2B_{m}\sqrt{14}\,\varepsilon^{m}$, because
  $\varepsilon^{-m} = \bar{\varepsilon}^{m} = A_{m} - B_{m}\sqrt{14}$.
  Hence $2B_{m}\sqrt{14}\,r_{m}
  = (\varepsilon^{m} - \varepsilon^{-m})\bar{p} + \bar{\pi}(\varepsilon^{m} - n)$. A direct calculation gives
  \[
    2B_{m}\sqrt{14}\,\bar{p} = 3B_{m}\sqrt{14} - 14B_{m},\qquad
    \bar{\pi}\varepsilon^{m} = 4A_{m} - 14B_{m} + (4B_{m} - A_{m})\sqrt{14},
  \]
  and the sum of these two numbers is
  $4A_{m} - 28B_{m} + (7B_{m} - A_{m})\sqrt{14} = \bar{\pi}(A_{m} - 7B_{m})$.
  This proves the first formula in~\eqref{eq:rmexplicit}. Conjugating it
  gives $\bar{r}_{m} = -(\pi/\bar{\pi})r_{m} = -\varepsilon r_{m}$,
  because $\pi = \varepsilon\bar{\pi}$. The integer $A_{m} - 7B_{m} - n$
  is not zero, because $A_{m}^{2} = 14B_{m}^{2} + 1 < 49B_{m}^{2}$ gives
  $A_{m} < 7B_{m}$. By Lemma~\ref{lem:valuation}(a) and (b),
  \begin{align*}
    \ve(r_{m}) & = \ve(\bar{\pi}) + 2\operatorname{ord}_{2}(A_{m} - 7B_{m} - n)
    - \ve(2) - \ve(B_{m}) - \ve(\sqrt{14}) \\
               & = 2\operatorname{ord}_{2}(A_{m} - 7B_{m} - n) - 2 - 2\operatorname{ord}_{2}(B_{m}),
  \end{align*}
  and we will see below that $\operatorname{ord}_{2}(B_{m}) = 2$ for every
  odd $m$. This gives the formula for $\ve(r_{m})$. In particular
  $\ve(r_{m})$ is even, and $\ve(r_{m}) \geq 10$ if and only if
  $\operatorname{ord}_{2}(A_{m} - 7B_{m} - n) \geq 8$, which
  is~\eqref{eq:residue}.

  Now write $m = 2k + 1$, and let
  $\omega = \varepsilon^{2} - 1 = 448 + 120\sqrt{14}$. A direct
  calculation gives
  \[
    \omega^{3} \equiv 0,\qquad
    \varepsilon\omega \equiv 128 + 8\sqrt{14},\qquad
    \varepsilon\omega^{2} \equiv 128 \pmod{256R}.
  \]
  Therefore
  \[
    \varepsilon^{m} = \varepsilon(1 + \omega)^{k}
    \equiv \varepsilon + k\varepsilon\omega + \binom{k}{2}\varepsilon\omega^{2}
    \equiv 15 + 4\sqrt{14} + k(128 + 8\sqrt{14}) + 128\binom{k}{2}
    \pmod{256R}.
  \]
  Thus $B_{m} \equiv 4 + 8k \pmod{256}$, which shows that
  $\operatorname{ord}_{2}(B_{m}) = 2$, and
  \[
    A_{m} - 7B_{m} \equiv -13 + 72k + 64k(k - 1) = -13 + 8k + 64k^{2} \pmod{256}.
  \]
  Since $n = 104051 \equiv 115 \pmod{256}$, condition~\eqref{eq:residue}
  is equivalent to $8k + 64k^{2} \equiv 128 \pmod{256}$, that is, to
  \begin{equation} \label{eq:scongruence}
    k + 8k^{2} \equiv 16 \pmod{32}.
  \end{equation}
  The map $k \mapsto k + 8k^{2}$ is injective modulo $32$, because
  $(k_{1} + 8k_{1}^{2}) - (k_{2} + 8k_{2}^{2}) = (k_{1} - k_{2})(1 + 8(k_{1} + k_{2}))$
  and the second factor is odd. So it permutes the residue classes modulo
  $32$, and since $k = 16$ satisfies~\eqref{eq:scongruence}, the
  congruence holds if and only if $k \equiv 16 \pmod{32}$, that is, if and
  only if $m \equiv 33 \pmod{64}$.

  Finally, let $m \equiv 33 \pmod{64}$. By~\eqref{eq:valuation},
  $\ve(4\sqrt{14}) = 5$, and $\ve(r_{m}) \geq 10$, so
  $y_{m,1} = r_{m} + 4\sqrt{14}$ has valuation $5$ by
  Lemma~\ref{lem:valuation}(b). The labels $\ell_{m,k}$ lie in $\pi^{5}R$,
  so Lemma~\ref{lem:periodic}(c) and the recursion~\eqref{eq:ymkrecursion}
  give $\ve(y_{m,k}) \geq 5$ for all $k \geq 1$, and
  $y_{m,k} = \varepsilon^{-1}(y_{m,k + 1} + \ell_{m,k})$ gives the same
  for all $k \leq 0$, because multiplication by the unit
  $\varepsilon^{-1}$ preserves valuations. Since $\ve(1 + 2\sqrt{14}) = 0$
  by~\eqref{eq:valuation}, the element $z_{m} = y_{m,0} - 1 - 2\sqrt{14}$
  has valuation $0$ by Lemma~\ref{lem:valuation}(b). This proves the
  lemma.
\end{proof}

From now on we let $m$ run over the progression
\begin{equation} \label{eq:progression}
  m = 33 + 64j\qquad(j \geq 0).
\end{equation}
For $m = 33$ the valuation of $r_{m}$ is $12$, and for $m = 97$ it is
$10$.

\subsection{Small normalized remainders of \texorpdfstring{$z_{m}$}{z\_m}}\label{sec:smallrem}
We now reduce the possible small normalized remainders of $z_{m}$ to a finite list.
The reduction has three steps. We first multiply a normalized remainder by a power
of $\varepsilon$ so that its two conjugates have comparable size. We then
exclude the positions close to the class of $p$, using the gap between
$1$ and $5/4$ in Lemma~\ref{lem:pclass}. Along the two period-$2$ orbits
the distance from the class of $p$ grows by the factor $\varepsilon$ at
each step, so a bound at the end of each stretch of the orbit bounds the
whole stretch, and the number of positions excluded in this way grows
with $m$. At the remaining positions the bounds on the two conjugates
restrict the possible quotients to finite boxes. These boxes do not
depend on $m$ once $m$ is sufficiently large.

The next lemma is a basic device in the study of inhomogeneous minima, and
we will use it to reduce questions about all normalized remainders of a
periodic class to finitely many normalized remainders. We call a nonzero
element $w$ of $K$ \emph{balanced} if
$\varepsilon^{-1} \leq |w/\bar{w}| < \varepsilon$.

\begin{lemma} \label{lem:balance}
  \mbox{}
  \begin{enumerate}
    \item[(a)] For every nonzero $w \in K$ there is a unique integer $g$
      such that $\varepsilon^{g}w$ is balanced.
    \item[(b)] Every balanced element $w$ of $K$ with $|N(w)| < 1$
      satisfies
      \begin{equation} \label{eq:balance}
        |w|,|\bar{w}| < \sqrt{\varepsilon},
        \qquad |w| + |\bar{w}| < \varepsilon^{1/2} + \varepsilon^{-1/2} = \sqrt{32}.
      \end{equation}
    \item[(c)] Let $w$ be a normalized remainder of a class $z + R$ with
      $(\varepsilon^{P} - 1)z \in R$ for some $P \geq 1$, and let
      $g \in \Z$. Then $\varepsilon^{g}w$ is a normalized remainder of one
      of the classes $\varepsilon^{k}z + R$ with $0 \leq k < P$, and
      $N(\varepsilon^{g}w) = N(w)$ and $\ve(\varepsilon^{g}w) = \ve(w)$.
  \end{enumerate}
\end{lemma}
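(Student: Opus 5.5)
The plan is to treat the three parts separately, since each is elementary.

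For (a), note that $\varepsilon^{g}w/\overline{\varepsilon^{g}w} = \varepsilon^{2g}\,w/\bar{w}$, because $\bar{\varepsilon} = \varepsilon^{-1}$. Since $w \neq 0$ and $\bar{w} \neq 0$, the number $\log_{\varepsilon}|w/\bar{w}|$ is a real number $x$, and the condition for $\varepsilon^{g}w$ to be balanced becomes $-1 \leq x + 2g < 1$. The half-open interval $[-1,1)$ has length $2$, so exactly one integer $g$ satisfies this, namely $g = \lfloor (1 - x)/2\rfloor$. This gives existence and uniqueness.

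For (b), write $a = |w|$ and $b = |\bar{w}|$. Then $ab = |N(w)| < 1$ and $\varepsilon^{-1} \leq a/b < \varepsilon$. From $a^{2} = (a/b)(ab) < \varepsilon$ and $b^{2} = (b/a)(ab) \leq \varepsilon \cdot ab < \varepsilon$ we get $a,b < \sqrt{\varepsilon}$. For the sum, set $r = a/b \in [\varepsilon^{-1},\varepsilon)$ and $P = ab < 1$. Then
\[
a + b = \sqrt{P}\,\bigl(\sqrt{r} + 1/\sqrt{r}\bigr).
\]
The function $\sqrt{r} + 1/\sqrt{r}$ is maximized on $[\varepsilon^{-1},\varepsilon]$ at the endpoints, where it equals $\varepsilon^{1/2} + \varepsilon^{-1/2}$. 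Since $\sqrt{P} < 1$, the inequality is strict. It remains to compute $(\varepsilon^{1/2} + \varepsilon^{-1/2})^{2} = \varepsilon + \varepsilon^{-1} + 2 = 30 + 2 = 32$, using $\varepsilon + \bar{\varepsilon} = 30$.

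For (c), write $w = z - q$ with $q \in R$. Then $\varepsilon^{g}w = \varepsilon^{g}z - \varepsilon^{g}q$ with $\varepsilon^{g}q \in R$, so $\varepsilon^{g}w$ lies in $\varepsilon^{g}z + R$. Choose $k \equiv g \pmod{P}$ with $0 \leq k < P$, and write $g = k + jP$. Then $\varepsilon^{g}z - \varepsilon^{k}z = \varepsilon^{k}(\varepsilon^{jP} - 1)z$. This lies in $R$, because $(\varepsilon^{jP} - 1)z$ is an $R$-multiple of $(\varepsilon^{P} - 1)z$ when $j \geq 0$, and equals $-\varepsilon^{jP}(\varepsilon^{-jP} - 1)z$ when $j < 0$. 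The same reasoning appears in Lemma~\ref{lem:pclass}. Finally, $N(\varepsilon) = 1$ and $\ve(\varepsilon) = 0$ give the equalities $N(\varepsilon^{g}w) = N(w)$ and $\ve(\varepsilon^{g}w) = \ve(w)$ by multiplicativity (Lemma~\ref{lem:valuation}(b)).

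No step is a real obstacle. The only point needing care is the strictness and the constant $\sqrt{32}$ in (b), which come from $|N(w)| < 1$ together with $\varepsilon + \varepsilon^{-1} = 30$.
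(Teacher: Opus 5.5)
Your proof is correct and follows the paper's argument in all three parts: in (a) the partition of the positive reals by the intervals $[\varepsilon^{-1-2g},\varepsilon^{1-2g})$, in (b) the factorization $|w| + |\bar{w}| = |N(w)|^{1/2}(r^{1/2} + r^{-1/2})$ together with $\varepsilon + \varepsilon^{-1} = 30$, and in (c) the decomposition $g = k + jP$. One small slip is the explicit choice $g = \lfloor (1-x)/2\rfloor$ in (a), which is wrong when $x$ is an odd integer; the correct formula is $g = \lceil (-1-x)/2\rceil$, and your existence-and-uniqueness argument via the length-$2$ half-open interval is unaffected (for instance, for $w = \pi = \varepsilon\bar{\pi}$ your formula gives $g = 0$, but $\pi$ is not balanced, while $g = -1$ gives the balanced element $\bar{\pi}$).
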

\begin{proof}
  (a) Since $\bar{\varepsilon} = \varepsilon^{-1}$, we have
  $|\varepsilon^{g}w/\overline{\varepsilon^{g}w}| = \varepsilon^{2g}|w/\bar{w}|$,
  and the intervals $[\varepsilon^{-1 - 2g},\varepsilon^{1 - 2g})$ with
  $g \in \Z$ partition the positive real numbers. Hence there is a unique
  integer $g$ for which $\varepsilon^{g}w$ is balanced.

  (b) Write $r = |w/\bar{w}|$, so that
  $\varepsilon^{-1} \leq r < \varepsilon$. Since
  $|w|\,|\bar{w}| = |N(w)|$, we have $|w|^{2} = |N(w)|\,r$ and
  $|\bar{w}|^{2} = |N(w)|/r$. Both products are less than $\varepsilon$,
  because $|N(w)| < 1$, $r < \varepsilon$, and $1/r \leq \varepsilon$.
  Also $|w| + |\bar{w}| = |N(w)|^{1/2}(r^{1/2} + r^{-1/2})
  < \varepsilon^{1/2} + \varepsilon^{-1/2}$, because $x + 1/x$ is at most
  $\varepsilon^{1/2} + \varepsilon^{-1/2}$ for
  $\varepsilon^{-1/2} \leq x \leq \varepsilon^{1/2}$. Since
  $(\varepsilon^{1/2} + \varepsilon^{-1/2})^{2} = \varepsilon + 2 + \varepsilon^{-1}
  = 32$, this proves~\eqref{eq:balance}.

  (c) Write $g = k + Pj$ with $0 \leq k < P$ and $j \in \Z$. Since
  $(\varepsilon^{P} - 1)z \in R$ and $\varepsilon$ is a unit,
  $(\varepsilon^{Pj} - 1)z \in R$, and so
  $\varepsilon^{g}w \in \varepsilon^{g}z + R = \varepsilon^{k}z + R$.
  Multiplication by a unit preserves norms and valuations. This proves the
  lemma.
\end{proof}

Ramsey~\cite[Lemma~1]{Ramsey2025} states a form of part~(b), which he
attributes to Berend and Moran~\cite[Lemma~4.2]{BerendMoran1992}. In the
notation of this paper, his lemma says that if a class has a normalized
remainder of absolute norm less than $c$, then some class in its orbit has
a normalized remainder $w$ with $|w|,|\bar{w}| < \sqrt{\varepsilon c}$ and
$|N(w)| < c$. Cavallar and Lemmermeyer use a similar
device~\cite[Proposition~4.3, p.~341]{CavallarLemmermeyer2000}, with a
bound on $|w|$ in place of the condition on $|w/\bar{w}|$.

By Lemma~\ref{lem:balance}, it is enough to consider balanced normalized
remainders of the classes $\varepsilon^{k}z_{m} + R$ with $0 \leq k < 2m$.
For $0 \leq k < 2m$ we call $k$ the \emph{position} of the representative
$y_{m,k}$ of this class, and we speak of a normalized remainder
$y_{m,k} - q$ at the position $k$. Two positions may represent the same
class, which does no harm, because the lemmas below are statements about
the representatives $y_{m,k}$. The next lemma shows that no balanced
normalized remainder of absolute norm less than $1$ occurs near the class
of $p$.

\begin{lemma} \label{lem:gap}
  Let $y \in K$ satisfy $|y - p| < 1/23$ and $|\bar{y} - \bar{p}| < 1/23$.
  Then the class $y + R$ contains no balanced element of absolute norm
  less than $1$.
\end{lemma}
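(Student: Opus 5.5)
Suppose, for a contradiction, that $w = y - r$ with $r \in R$ is balanced and $|N(w)| < 1$. By Lemma~\ref{lem:balance}(b) we then have $|w|,|\bar{w}| < \sqrt{\varepsilon}$, and in fact $|w| + |\bar{w}| < \sqrt{32}$. The plan is to compare $w$ with the element $p - r$ of the class $p + R$. Write $a = y - p$, so that $|a|,|\bar{a}| < 1/23$, and put $v = p - r$. Then $w = v + a$ and $\bar{w} = \bar{v} + \bar{a}$. By Lemma~\ref{lem:pclass} we know $|N(v)| = |v\bar{v}| \geq 5/4$. So it suffices to show that a perturbation of size less than $1/23$ in each conjugate cannot drop the absolute norm from at least $5/4$ to below $1$, as long as the conjugates stay in the bounded region forced by the balance condition.

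For the estimate, I would expand
\[
  N(w) = v\bar{v} + v\bar{a} + \bar{v}a + a\bar{a},
\]
which gives
\[
  |N(w)| \geq |N(v)| - \tfrac{1}{23}(|v| + |\bar{v}|) - \tfrac{1}{23^{2}}.
\]
Using the triangle inequality on the bounds for $w$, we get $|v| + |\bar{v}| \leq |w| + |\bar{w}| + 2/23 < \sqrt{32} + 2/23$. Substituting, the right side becomes
\[
  \tfrac{5}{4} - \tfrac{\sqrt{32}}{23} - \tfrac{2}{23^{2}} - \tfrac{1}{23^{2}}
  = 1.25 - 0.2459\ldots - 0.0057\ldots,
\]
which is about $0.998$. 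This is just barely below $1$, so the crude bound would fail, and this step is the one I expect to be the main obstacle.

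To get the margin, I would avoid adding $|v|$ and $|\bar{v}|$ separately and instead keep the product structure. Write $|w| = |N(w)|^{1/2}\,x$ and $|\bar{w}| = |N(w)|^{1/2}/x$ with $\varepsilon^{-1/2} \leq x \leq \varepsilon^{1/2}$. Then
\[
  |v|\,|\bar{v}| \leq (|w| + \tfrac{1}{23})(|\bar{w}| + \tfrac{1}{23})
  = |N(w)| + \tfrac{1}{23}(|w| + |\bar{w}|) + \tfrac{1}{23^{2}}.
\]
Applying the sum bound from Lemma~\ref{lem:balance}(b) gives
\[
  |N(v)| < 1 + \tfrac{\sqrt{32}}{23} + \tfrac{1}{529}.
\]
The right side is $1 + 0.24595 + 0.00189 = 1.24784 < 5/4$, which contradicts $|N(v)| \geq 5/4$ from Lemma~\ref{lem:pclass}. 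This version is sharper than the previous one because the $a\bar{a}$ term enters only once, with a plus sign on the bounding side.

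So the proof will consist of the following steps in order:
\begin{enumerate}
  \item Set up $v = p - r \in p + R$ and $a = y - p$.
  \item Write $v = w - a$ and $\bar{v} = \bar{w} - \bar{a}$, and bound $|v\bar{v}|$ from above by the product $(|w| + \tfrac{1}{23})(|\bar{w}| + \tfrac{1}{23})$.
  \item Apply $|N(w)| < 1$ together with~\eqref{eq:balance}.
  \item Compare the result with $5/4$. The only numerical check needed is $\sqrt{32}/23 + 1/529 < 1/4$, which is equivalent to $23\sqrt{32} + 1 < 132.25$, that is, $130.11\ldots + 1 < 132.25$, and this holds.
\end{enumerate}
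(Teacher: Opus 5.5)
Your final argument is correct and is essentially the paper's proof: write $p - r = w - a$, bound $|N(p - r)| \leq (|w| + 1/23)(|\bar{w}| + 1/23) < 1 + \sqrt{32}/23 + 1/23^{2} < 5/4$ using $|N(w)| < 1$ and \eqref{eq:balance}, and contradict Lemma~\ref{lem:pclass}. The parametrization by $x$ is unnecessary, and your first attempt fell short because it bounded $|v| + |\bar{v}|$ instead of $|w| + |\bar{w}|$, which cost an extra $2/23^{2}$; it was not because of how the $a\bar{a}$ term entered.
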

\begin{proof}
  Suppose, by way of contradiction, that $w = y - q$ is such an element,
  with $q \in R$. Let $a = y - p$ and $b = \bar{y} - \bar{p}$. Then
  $p - q = w - a$ and $\bar{p} - \bar{q} = \bar{w} - b$, so
  by~\eqref{eq:balance},
  \[
    |N(p - q)| \leq |w\bar{w}| + \frac{|w| + |\bar{w}|}{23} + \frac{1}{23^{2}}
    < 1 + \frac{\sqrt{32}}{23} + \frac{1}{23^{2}}.
  \]
  The right side is less than $5/4$, because
  $\sqrt{32}/23 + 1/23^{2} = 0.2478\ldots < 1/4$. This contradicts
  Lemma~\ref{lem:pclass}, and proves the lemma.
\end{proof}

The radius comes from the gap $5/4 - 1$ in Lemma~\ref{lem:pclass}. For a
radius $1/b$ the proof gives $|N(p - q)| < 1 + \sqrt{32}/b + 1/b^{2}$, and
$23$ is the least positive integer $b$ for which this bound is at most
$5/4$.

We apply Lemma~\ref{lem:gap} to representatives that are close to a point
$c$ of $p + R$ other than $p$. If $c \in p + R$ and $y \in K$ satisfy
$|y - c| < 1/23$ and $|\bar{y} - \bar{c}| < 1/23$, then the element
$y - (c - p)$ of the class $y + R$ satisfies the hypothesis of
Lemma~\ref{lem:gap}, so $y + R$ contains no balanced element of absolute
norm less than $1$.

\begin{lemma} \label{lem:positions}
  Let $m \geq 7$ be odd.
  \begin{enumerate}
    \item[(a)] Let $0 \leq k < 2m$ with
      $k \notin \{1,\,2,\,m - 2,\,m,\,m + 1,\,m + 2,\,m + 3,\,m + 5\}$.
      Then the class $\varepsilon^{k}z_{m} + R$ contains no balanced
      element of absolute norm less than $1$.
    \item[(b)] As $m \to \infty$ through odd integers, the points
      $(\bar{y}_{m,m + 1 + d},y_{m,m + 1 + d})$ converge to $\eta_{d}$,
      written in the coordinates $(s,u)$ as
      $((\eta_{d})_{s},(\eta_{d})_{u})$, for every fixed $d \in \Z$, and
      the points $(\bar{y}_{m,1},y_{m,1})$ and $(\bar{y}_{m,2},y_{m,2})$
      converge to the two points~\eqref{eq:secondorbitpoints}. For each of
      the eight positions $k$ listed in (a), let $(s,u)$ be the limit of
      $(\bar{y}_{m,k},y_{m,k})$. Then, for all sufficiently large $m$,
      every balanced element of absolute norm less than $1$ in the class
      $\varepsilon^{k}z_{m} + R$ has the form $y_{m,k} - q$ with $q \in R$
      and
      \begin{equation} \label{eq:box}
        |u - q| \leq 6,\qquad |s - \bar{q}| \leq 6.
      \end{equation}
  \end{enumerate}
\end{lemma}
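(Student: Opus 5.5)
For (a), I will use the closed form~\eqref{eq:representative} of Lemma~\ref{lem:closing}(a) to show that $y_{m,k}$ lies within $1/23$, in both conjugates, of an element of $p+R$, and then apply Lemma~\ref{lem:gap} in the translated form stated after it. The positions $3\le k\le 2m+2$ cover one full period. They split into three stretches: the stretch $k\le m-3$, the stretch $k\ge m+6$, and the excluded positions.

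\emph{Stretch $k\le m-3$} (that is, $d=k-m-1\le -4$). Here $(\eta_d)_s$ equals the stable coordinate of $(11/2,5/2)$ or of $(13/2,7/2)$. By~\eqref{eq:etatail}, $(\eta_d)_u$ equals the corresponding unstable coordinate plus $\varepsilon^{d}\delta$. Both points are coefficient points of elements $c\in p+R$. By~\eqref{eq:representative},
\[
  y_{m,k}-c=\varepsilon^{d}\delta+\frac{\bar\pi\varepsilon^{d}(\varepsilon^m-n)}{\varepsilon^{2m}-1},
  \qquad
  \bar y_{m,k}-\bar c=-\frac{\pi\varepsilon^{-d}(\varepsilon^m-n)}{\varepsilon^{2m}-1}.
\]
The unstable error is at most $\varepsilon^{d}(n|\bar\pi|+2|\bar\pi|)$. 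For $d\le -5$ this is below $1/23$ since $\varepsilon^{5}>2\cdot10^{7}$ and $n|\bar\pi|\approx 2.7\cdot10^4$. The stable error is about $\pi\varepsilon^{-d-m}$. For $d\ge -m+2$, that is $k\ge 3$, this is at most $\pi\varepsilon^{-2}<1/23$. The value $d=-4$, i.e.\ $k=m-3$, needs an exact check, as does $k=m-2$. I expect $k=m-3$ to pass and $k=m-2$ to fail, which is why $m-2$ is listed.

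\emph{Stretch $k\ge m+6$} (that is, $d\ge 5$). This is symmetric: $(\eta_d)_u$ is a period-$2$ coordinate, and $(\eta_d)_s$ carries the tail $-\varepsilon^{-d}\bar\delta$. The positions $k=1,2$ correspond to $2m+1,2m+2$. There the fraction in~\eqref{eq:representative} tends to $\bar\pi$ and $\pi$, which are not small, so these positions are excluded. I also need to treat the remaining positions $k=0$ and $k=3,4$ separately. At $k=0\equiv 2m$ the error is roughly $\bar\pi\varepsilon^{-1}$, which is small. At $k=3,4$ the stable term is $\pi\varepsilon^{m+1-k-2m}$, which is tiny, and the unstable tail $\varepsilon^{k-m-1}n$ is tiny. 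Positions near $m+1$ other than the listed ones, namely $m-1$ and $m+4$, must be checked directly from the limiting points $\eta_{-2}$ and $\eta_{3}$. These lie within $1/23$ of $(15/2,7/2)$ and $(1/2,7/2)$ by the figure coordinates (distance about $0.0001$ and $0.0329$). That is fine, but $|\Delta u|\le(1+\sqrt{14})\,\|\cdot\|$, so it must be verified exactly in $K$. Positions $m$, $m+2$, $m+3$, $m+5$ are the ones where this fails.

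For (b), convergence follows directly from~\eqref{eq:representative}, since $\varepsilon^{d}(\varepsilon^m-n)/(\varepsilon^{2m}-1)\to 0$ for fixed $d$. Then, for a balanced $w=y_{m,k}-q$ with $|N(w)|<1$, Lemma~\ref{lem:balance}(b) gives $|w|,|\bar w|<\sqrt\varepsilon<5.48$. Hence $|u-q|\le|u-y_{m,k}|+5.48\le 6$ and $|s-\bar q|\le 6$ once $m$ is large. Every balanced element of the class $\varepsilon^kz_m+R$ is of this form because $y_{m,k}\in\varepsilon^kz_m+R$ by Lemma~\ref{lem:closing}(b).

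The main obstacle is the boundary bookkeeping in (a). I have to verify exactly that each non-listed position near the two transitions, in particular $k=m-3$, $m-1$, $m+4$ and $k=0,3,4$, lies within $1/23$ in both conjugates of a point of $p+R$. This must hold uniformly in $m$, using the exact tails and exact comparisons in $K$ for the finitely many $\eta_d$ involved.
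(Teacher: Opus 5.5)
Your proposal is correct and is essentially the paper's proof. Both use the deviation formula from Lemma~\ref{lem:closing}(a) and the translated form of Lemma~\ref{lem:gap} on the two long stretches $3\le k\le m-3$ and $m+6\le k\le 2m$, where the tightest cases are $d=-4$ and, by the reflection symmetry, $d=5$. Both treat $k=m-1$ and $k=m+4$ separately by exact comparison with a margin uniform in $m$, and both use $|w|,|\bar w|<\sqrt{\varepsilon}<6$ for (b).

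There is one small slip, which does not affect correctness. At $k=3,4$ the stable error is about $\pi\varepsilon^{1-k}$, because $\theta_m\approx\varepsilon^{-m}$; it is not $\pi\varepsilon^{1-k-m}$. At $k=3$ this is $\pi\varepsilon^{-2}$, which is small but not tiny. These positions already lie in your first stretch, however, and your bound $\pi\varepsilon^{-2}<1/23$ there covers them, so the separate treatment is redundant.
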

\begin{proof}
  Let
  \[
    \theta_{m} = \frac{\varepsilon^{m} - n}{\varepsilon^{2m} - 1}.
  \]
  Since $\varepsilon > 29$ and $29^{4} = 707281 > n$, we have
  $\varepsilon^{m} > n$, and $\varepsilon^{2m} - 1 > \varepsilon^{2m}/2$,
  so $0 < \theta_{m} < 2\varepsilon^{-m} \leq 2\varepsilon^{-7}$. In
  particular $0 < \theta_{m} < 1$. For $3 \leq k \leq 2m + 2$ and
  $j = k - m - 1$, the formula~\eqref{eq:representative} says that
  \begin{equation} \label{eq:deviation}
    y_{m,k} = (\eta_{j})_{u} + \bar{\pi}\varepsilon^{j}\theta_{m},\qquad
    \bar{y}_{m,k} = (\eta_{j})_{s} - \pi\varepsilon^{-j}\theta_{m}.
  \end{equation}
  Every position $0 \leq k < 2m$ is congruent modulo $2m$ to exactly one
  $k$ in this range, and $y_{m,k}$ depends only on $k$ modulo $2m$.

  (a) Suppose first that $3 \leq k \leq m - 3$, so that
  $2 - m \leq j \leq -4$. By~\eqref{eq:etatail}, $\eta_{j}$ is the point
  $(\bar{c},c)$ of an element $c \in p + R$, namely
  $c = (11 + 5\sqrt{14})/2$ or $c = (13 + 7\sqrt{14})/2$, plus the point
  $(0,\varepsilon^{j}\delta)$. Hence, by~\eqref{eq:deviation} and
  $\delta = -n\bar{\pi}$,
  \[
    y_{m,k} - c = \varepsilon^{j}\bar{\pi}(\theta_{m} - n),\qquad
    \bar{y}_{m,k} - \bar{c} = -\pi\varepsilon^{-j}\theta_{m}.
  \]
  Since $0 < \theta_{m} < 1 < n$ and $j \leq -4$, the first difference has
  absolute value less than $n\bar{\pi}\varepsilon^{-4}$. Since
  $j \geq 2 - m$ and $\varepsilon^{m - 2}\theta_{m} < \varepsilon^{-2}$,
  which is the inequality
  $\varepsilon^{2m} - n\varepsilon^{m} < \varepsilon^{2m} - 1$, the second
  difference has absolute value less than $\pi\varepsilon^{-2}$. Exact
  comparison in $K$ gives
  \begin{equation} \label{eq:tail}
    \frac{n\bar{\pi}}{\varepsilon^{4}} < \frac{1}{23},\qquad
    \frac{\pi}{\varepsilon^{2}} < \frac{1}{23},\qquad
    \frac{n\pi}{\varepsilon^{5}} < \frac{1}{23},\qquad
    \frac{\bar{\pi}}{\varepsilon} < \frac{1}{23}.
  \end{equation}
  By the first two inequalities and the remark before the lemma, the class
  $\varepsilon^{k}z_{m} + R$ contains no balanced element of absolute norm
  less than $1$.

  Next suppose that $m + 6 \leq k \leq 2m$, so that $5 \leq j \leq m - 1$.
  By~\eqref{eq:etatail}, $\eta_{j}$ is the point $(\bar{c},c)$ of
  $c = (5 + 5\sqrt{14})/2$ or $c = (3 + 7\sqrt{14})/2$, both of which lie
  in $p + R$, plus the point $(-\varepsilon^{-j}\bar{\delta},0)$. Hence,
  by~\eqref{eq:deviation} and $\bar{\delta} = -n\pi$,
  \[
    y_{m,k} - c = \bar{\pi}\varepsilon^{j}\theta_{m},\qquad
    \bar{y}_{m,k} - \bar{c} = \varepsilon^{-j}\pi(n - \theta_{m}).
  \]
  Since $j \leq m - 1$ and
  $\varepsilon^{m - 1}\theta_{m} < \varepsilon^{-1}$, by the same
  inequality as before, the first difference is less than
  $\bar{\pi}\varepsilon^{-1}$, and since $j \geq 5$ the second is less
  than $n\pi\varepsilon^{-5}$. By the last two inequalities
  in~\eqref{eq:tail}, the class contains no balanced element of absolute
  norm less than $1$.

  It remains to treat $k = m - 1$ and $k = m + 4$, that is, $j = -2$ and
  $j = 3$. Let $c = (15 + 7\sqrt{14})/2 = p + 6 + 3\sqrt{14}$ if $j = -2$
  and $c = (1 + 7\sqrt{14})/2 = p - 1 + 3\sqrt{14}$ if $j = 3$. Exact
  comparison in $K$ gives
  \begin{equation} \label{eq:twomore}
    |(\eta_{j})_{u} - c| < \frac{1}{23} - \frac{1}{\varepsilon^{3}},\qquad
    |(\eta_{j})_{s} - \bar{c}| < \frac{1}{23} - \frac{1}{\varepsilon^{3}}
    \qquad(j \in \{-2,3\}).
  \end{equation}
  By~\eqref{eq:deviation}, the point $(\bar{y}_{m,k},y_{m,k})$ differs
  from $\eta_{j}$ by less than
  $\pi\varepsilon^{3}\theta_{m} < 2\pi\varepsilon^{-4}$ in the stable
  coordinate and by less than
  $\bar{\pi}\varepsilon^{3}\theta_{m} < 2\bar{\pi}\varepsilon^{-4}$ in the
  unstable coordinate, and both bounds are less than $\varepsilon^{-3}$,
  because $2\bar{\pi} < 2\pi < \varepsilon$. Hence $|y_{m,k} - c| < 1/23$
  and $|\bar{y}_{m,k} - \bar{c}| < 1/23$, and the class contains no
  balanced element of absolute norm less than $1$.

  The positions treated so far are $3 \leq k \leq m - 3$, $k = m - 1$,
  $k = m + 4$, and $m + 6 \leq k \leq 2m$, where $k = 2m$ is the position
  $0$. The remaining positions are the eight positions listed in the
  statement. This proves (a).

  (b) For fixed $d$ and $k = m + 1 + d$ we have $j = d$
  in~\eqref{eq:deviation}, and $\varepsilon^{d}\theta_{m}$ and
  $\varepsilon^{-d}\theta_{m}$ tend to $0$, which proves the first
  assertion. For $k = 1$ and $k = 2$ we use $k = 2m + 1$ and $k = 2m + 2$,
  so that $j = m$ and $j = m + 1$. Since $m$ is odd, \S\ref{sec:etadef}
  and~\eqref{eq:etatail} give $(\eta_{m})_{u} = (3 + 7\sqrt{14})/2$,
  $(\eta_{m})_{s} = (3 - 7\sqrt{14})/2 - \varepsilon^{-m}\bar{\delta}$,
  $(\eta_{m + 1})_{u} = (5 + 5\sqrt{14})/2$, and
  $(\eta_{m + 1})_{s} = (5 - 5\sqrt{14})/2 - \varepsilon^{-m - 1}\bar{\delta}$.
  Also $\varepsilon^{m}\theta_{m} \to 1$, so the fractions
  in~\eqref{eq:deviation} tend to $\bar{\pi}$ and to
  $\varepsilon\bar{\pi} = \pi$ for the unstable coordinates and to $0$ for
  the stable coordinates. Since
  $(3 + 7\sqrt{14})/2 + \bar{\pi} = (11 + 5\sqrt{14})/2$ and
  $(5 + 5\sqrt{14})/2 + \pi = (13 + 7\sqrt{14})/2$, the limits are the
  points~\eqref{eq:secondorbitpoints}.

  Now let $w$ be a balanced element of absolute norm less than $1$ in
  $\varepsilon^{k}z_{m} + R = y_{m,k} + R$, and write $w = y_{m,k} - q$
  with $q \in R$. By~\eqref{eq:balance}, $|w| < \sqrt{\varepsilon}$ and
  $|\bar{w}| < \sqrt{\varepsilon}$. The radius $6$ in~\eqref{eq:box} is
  the least integer greater than $\sqrt{\varepsilon}$, since
  $\sqrt{14} < 4$ gives $\varepsilon = 15 + 4\sqrt{14} < 31 < 36$, and
  $\sqrt{14} > 3$ gives $\varepsilon > 27 > 25$. Since
  \[
    |u - q| \leq |w| + |y_{m,k} - u|,\qquad
    |s - \bar{q}| \leq |\bar{w}| + |\bar{y}_{m,k} - s|,
  \]
  and the last terms tend to $0$, the margin between $\sqrt{\varepsilon}$
  and $6$ gives~\eqref{eq:box} for all sufficiently large $m$. There are
  eight positions, so one bound on $m$ suffices for all of them. This
  proves the lemma.
\end{proof}

By Lemma~\ref{lem:reversal}, the eight positions of
Lemma~\ref{lem:positions} form four pairs $\{k,3 - k\}$, namely
\[
  \{1,2\},\qquad \{m - 2,m + 5\},\qquad \{m,m + 3\},\qquad \{m + 1,m + 2\},
\]
where positions are read modulo $2m$. The map $q \mapsto 8 - \bar{q}$
sends the box~\eqref{eq:box} at the position $k$ onto the
box~\eqref{eq:box} at the position $3 - k$, because by~\eqref{eq:reversal}
the limit $(s,u)$ at the position $k$ is sent by the reflection to the
limit $(8 - u,8 - s)$ at the position $3 - k$, and
$|(8 - s) - (8 - \bar{q})| = |s - \bar{q}|$ and
$|(8 - u) - \overline{(8 - \bar{q})}| = |u - q|$. Since the normalized
remainder at the position $3 - k$ with the quotient $8 - \bar{q}$ has the
same absolute norm and valuation as the normalized remainder at the
position $k$ with the quotient $q$, it will suffice to examine the four
positions
\begin{equation} \label{eq:fourpositions}
  k = m + 1 + d\quad(d \in \{-3,-1,0\}),\qquad k = 1.
\end{equation}
The first three are the positions near the critical corner, and the last
is the position near the auxiliary homoclinic point. We say that the
position $k = m + 1 + d$ belongs to the displacement $\gamma = \delta$ and
the index $d$, and that the position $k = 1$ belongs to
$\gamma = \bar{\pi}$ and $d = 0$.

The boxes~\eqref{eq:box} are finite. To enumerate them, write
$u = A + B\sqrt{14}$ and $s = A' + B'\sqrt{14}$ with rational numbers $A$,
$B$, $A'$, $B'$, which is possible because the limit points have
coordinates in $K$, and $q = a + b\sqrt{14}$. Adding and subtracting the
two inequalities in~\eqref{eq:box} gives
\[
  \left|a - \frac{A + A' + (B + B')\sqrt{14}}{2}\right| \leq 6,
  \qquad
  \left|b - \frac{B - B'}{2} - \frac{(A - A')\sqrt{14}}{28}\right|
  \leq \frac{6}{\sqrt{14}} < 2.
\]
We therefore test~\eqref{eq:box} for the integers $a$ and $b$ in these
bounded ranges.

\subsection{The limit} \label{sec:limit}
For a position $k$ in~\eqref{eq:fourpositions}, with the displacement
$\gamma$ and the index $d$ to which it belongs, and a quotient $q$ in the
box~\eqref{eq:box}, Lemma~\ref{lem:positions}(b) shows that the stable and
unstable coordinates of the normalized remainder $y_{m,k} - q$ tend to
$s - \bar{q}$ and $u - q$. Let
\[
  \lambda_{\gamma,d,q} = |(s - \bar{q})(u - q)|
\]
be the absolute value of their product. The two coordinates need not be
conjugate, so this need not be an absolute norm, but it is the limit of
the absolute norms $|N(y_{m,k} - q)|$. For $\gamma = \delta$ the limit
point is $(s,u) = \eta_{d}$, and for $\gamma = \bar{\pi}$ it is the first
point in~\eqref{eq:secondorbitpoints}.

The valuations of the normalized remainders are easy to find. For $m$
in~\eqref{eq:progression}, every representative $y_{m,k}$ has valuation at
least $5$ by Lemma~\ref{lem:zm}, so by~\eqref{eq:fixedval} every
normalized remainder $y_{m,k} - q$ with $\ve(q) \leq 4$ has valuation
$\ve(q)$, for every $m$ in~\eqref{eq:progression}. The finite calculation
gives the following proposition.

\begin{proposition} \label{prop:limits}
  Each of the four boxes~\eqref{eq:box} at the
  positions~\eqref{eq:fourpositions} contains $17$ quotients. No number
  $\lambda_{\gamma,d,q}$ equals $1$, and exactly six of them are less than
  $1$. These six pairs are listed in the following table. For each row
  with $\gamma = \delta$, the entry $h$ is the valuation of the
  corresponding normalized remainder $y_{m,k} - q$ for every $m$
  in~\eqref{eq:progression}, and for the row with $\gamma = \bar{\pi}$ it
  is a lower bound for that valuation.
  \begin{center}
    \begin{tabular}{crlcc}
      \toprule
      $\gamma$    & $d$  & $q$              & $4\lambda_{\gamma,d,q}$               & $h$ \\
      \midrule
      $\delta$    & $-3$ & $4 + 2\sqrt{14}$ & $146137132281 - 39056791462\sqrt{14}$ & $3$ \\
      $\delta$    & $-3$ & $6 + 2\sqrt{14}$ & $59430185377 - 15883385150\sqrt{14}$  & $2$ \\
      $\delta$    & $-1$ & $4 + 3\sqrt{14}$ & $43287288513 - 11569014486\sqrt{14}$  & $1$ \\
      $\delta$    & $-1$ & $6 + 3\sqrt{14}$ & $43190736361 - 11543209838\sqrt{14}$  & $1$ \\
      $\delta$    & $0$  & $4 + 4\sqrt{14}$ & $43305976025 - 11574008934\sqrt{14}$  & $4$ \\
      $\bar{\pi}$ & $0$  & $4\sqrt{14}$     & $9 - 2\sqrt{14}$                      & $10$ \\
      \bottomrule
    \end{tabular}
  \end{center}
  Every listed value $\lambda = \lambda_{\gamma,d,q}$ satisfies
  \begin{equation} \label{eq:limitineq}
    \lambda^{4} \geq \nu^{h},
  \end{equation}
  with equality exactly in the row with $\gamma = \delta$ and $d = 0$, for
  which $\lambda = \nu$.
\end{proposition}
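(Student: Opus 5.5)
This is a finite, exact calculation in $K$, organized in four steps.

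\emph{Limit points.} For each of the four positions~\eqref{eq:fourpositions} I take the limit point $(s,u)$ from Lemma~\ref{lem:positions}(b). For $\gamma=\delta$ and $d\in\{-3,-1,0\}$ this is $\eta_d$. Its coordinates follow from~\eqref{eq:etacoords} and the recursion $\eta_{k+1}=\varepsilon\eta_k-\lambda_k$ with the labels~\eqref{eq:labels}, applied backwards: $(\eta_{k-1})_u=\varepsilon^{-1}((\eta_k)_u+\ell_{k-1})$ and $(\eta_{k-1})_s=\varepsilon((\eta_k)_s+\bar\ell_{k-1})$. Each coordinate is then an explicit element of $K$. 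For $\gamma=\bar\pi$ the limit point is the first point of~\eqref{eq:secondorbitpoints}.

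\emph{Boxes and the numbers $\lambda$.} Writing $u=A+B\sqrt{14}$ and $s=A'+B'\sqrt{14}$, I enumerate the integers $a,b$ in the bounded ranges displayed before \S\ref{sec:limit}. I keep those $q=a+b\sqrt{14}$ that satisfy~\eqref{eq:box}, deciding each inequality by exact sign tests in $K$: to compare $x+y\sqrt{14}$ with $0$, compare $x^2$ with $14y^2$ after checking signs. This should give $17$ quotients per box. For each of them I compute $\lambda_{\gamma,d,q}=|(s-\bar q)(u-q)|$ exactly. It is an element of $K$ (the product of two elements of $K$, not necessarily conjugate), and I compare it with $1$ exactly. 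This confirms that no value equals $1$ and that exactly the six tabulated values lie below $1$. As a consistency check, the row $d=0$, $q=4+4\sqrt{14}$ gives $UV=\nu$ by~\eqref{eq:UV} and~\eqref{eq:nu}. The row with $\gamma=\bar\pi$ gives $\lambda=|(\tfrac{3-7\sqrt{14}}2-(-4\sqrt{14}))(\tfrac{11+5\sqrt{14}}2-4\sqrt{14})|=|\bar p\,(p+\bar\pi-4\sqrt{14}+\ldots)|$, which should simplify to $(9-2\sqrt{14})/4$.

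\emph{Valuations.} For the rows with $\gamma=\delta$, Lemma~\ref{lem:zm} gives $\ve(y_{m,k})\ge5$ for every $m$ in~\eqref{eq:progression}. The listed quotients have valuation at most $4$, so~\eqref{eq:fixedval} gives $h=\ve(q)$, which I read off from~\eqref{eq:valuation}: $4+2\sqrt{14}$ gives $\min\{4,3\}=3$, $6+2\sqrt{14}$ gives $2$, and so on. For $\gamma=\bar\pi$ the normalized remainder $y_{m,1}-4\sqrt{14}$ is $r_m$, and Lemma~\ref{lem:zm} gives $\ve(r_m)\ge10$.

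\emph{Inequality~\eqref{eq:limitineq}.} For each row I check $\lambda^4\ge\nu^h$ by exact comparison in $K$. At $h=4$ with $\lambda=\nu$ this is an equality. For the other rows I verify strict inequality, for instance by comparing $\lambda^4/\nu^h$ with $1$ through rational bounds on $\sqrt{14}$ that are tight enough, or by fully exact sign tests. For the $\bar\pi$ row, the inequality $((9-2\sqrt{14})/4)^4>\nu^{10}$ is already recorded in \S\ref{sec:periods}, and since $\nu<1$ it persists for every larger $h$.

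I expect the main obstacle to be the rows with $d=-1$, $h=1$. There $\lambda\approx\nu$, since the tabulated numerators nearly agree with that of $\nu$, while $\nu^{1/4}\approx0.89$. The inequality $\lambda^4\ge\nu$ therefore needs $\lambda\gtrsim0.89$, and I must confirm that the tabulated entries really give $\lambda$ of that size; I would check this numerically before attempting exact proofs. The huge coefficients force exact integer arithmetic, so all comparisons go through the sign-test routine rather than floating point.
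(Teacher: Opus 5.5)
Your plan is the paper's proof: an exact computer enumeration of the four boxes, with every $\lambda_{\gamma,d,q}$ compared with $1$ and every $\lambda^{4}$ compared with $\nu^{h}$ in $K$, the valuations $h$ read off from \eqref{eq:valuation} via \eqref{eq:fixedval} for the rows with $\gamma=\delta$ and from Lemma~\ref{lem:zm} for $r_{m}$, and $\lambda=UV=\nu$ from \eqref{eq:UV} in the equality row. Two slips do not affect this. In the $\gamma=\bar{\pi}$ row the factors are $s-\bar{q}=p$ and $u-q=\bar{p}+\bar{\pi}$, so $\lambda=p^{2}/\varepsilon=(9-2\sqrt{14})/4$ by $\varepsilon(\bar{p}+\bar{\pi})=-p$. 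Also, near-agreement of the huge coefficients says nothing about the size of $x-y\sqrt{14}$, so the $d=-1$ values are not close to $\nu$: they are about $0.941$ and $0.891$, and the second exceeds $\nu^{1/4}\approx 0.890$ only narrowly, which is why that comparison must be done exactly.
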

\begin{proof}
  The boxes~\eqref{eq:box} are finite, and the enumeration described at
  the end of \S\ref{sec:smallrem} lists their elements. This is a finite
  calculation, which we carried out on a computer. It enumerates every
  quotient in the four boxes, compares every $\lambda_{\gamma,d,q}$ with
  $1$ by exact comparison in $K$, lists the pairs with
  $\lambda_{\gamma,d,q} < 1$, and verifies~\eqref{eq:limitineq}.

  The five values of $h$ for $\gamma = \delta$ are the valuations $3$,
  $2$, $1$, $1$, and $4$ of the quotients in the table,
  by~\eqref{eq:valuation}, and by the remark before the proposition they
  are the valuations of the normalized remainders for every $m$
  in~\eqref{eq:progression}. The normalized remainder in the row with
  $\gamma = \bar{\pi}$ is $y_{m,1} - 4\sqrt{14} = r_{m}$, whose valuation
  is at least $10$ for every $m$ in~\eqref{eq:progression} by
  Lemma~\ref{lem:zm}, and its limit is
  \[
    \lambda_{\bar{\pi},0,4\sqrt{14}}
    = \left|\Bigl(\frac{3 - 7\sqrt{14}}{2} + 4\sqrt{14}\Bigr)
    \Bigl(\frac{11 + 5\sqrt{14}}{2} - 4\sqrt{14}\Bigr)\right|
    = |p(\bar{p} + \bar{\pi})| = \frac{p^{2}}{\varepsilon}
    = \frac{9 - 2\sqrt{14}}{4},
  \]
  by the identity $\varepsilon(\bar{p} + \bar{\pi}) = -p$, which is a
  direct calculation. The equality $\lambda_{\delta,0,q} = \nu$ for
  $q = 4 + 4\sqrt{14}$ follows from $(\eta_{0})_{s} - \bar{q} = V$ and
  $(\eta_{0})_{u} - q = -U$, which is~\eqref{eq:UV}. This proves the
  proposition.
\end{proof}

By Lemma~\ref{lem:reversal}, the six pairs of the table have six partners
at the positions $m + 5$, $m + 3$, and $m + 2$ near the critical corner
and at the position $2$, with the quotients $8 - \bar{q}$, which are
\[
  4 + 2\sqrt{14},\quad 2 + 2\sqrt{14},\quad 4 + 3\sqrt{14},\quad
  2 + 3\sqrt{14},\quad 4 + 4\sqrt{14},\quad 8 + 4\sqrt{14},
\]
in the order of the rows. The partner normalized remainders have the same
absolute norms and valuations. Since $(T/2)^{h} = \nu^{-h/4}$, the
inequality~\eqref{eq:limitineq} says that $\lambda(T/2)^{h} \geq 1$. The
row with $\gamma = \delta$ and $d = 0$ is the only row with equality. Its
normalized remainder is the normalized remainder $w_{m} = y_{m,m + 1} - q$
of~\eqref{eq:wm}, whose point converges to the corner $(V,-U)$ of
Figure~\ref{fig:corner}(a), and its partner is the normalized remainder
$y_{m,m + 2} - q = -\bar{w}_{m}$ at the position $m + 2$, with the same
quotient $q = 8 - \bar{q}$.

We can now prove statement (A) of \S\ref{sec:strategy}.

\begin{proposition} \label{prop:upper}
  For every $t > T$ there is an $m_{0}$ such that, for every
  $m \geq m_{0}$ in~\eqref{eq:progression}, $z_{m} \in \mathcal{Z}$ and no
  normalized remainder of $z_{m}$ is reducing at weight $t$.
\end{proposition}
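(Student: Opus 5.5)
The plan is to combine the reduction to balanced normalized remainders at finitely many positions (Lemmas~\ref{lem:balance}, \ref{lem:positions}, \ref{lem:reversal}) with the limit table of Proposition~\ref{prop:limits}. First, $z_{m} \in \mathcal{Z}$ for every $m$ in~\eqref{eq:progression}. Indeed, $\ve(z_{m}) = 0$ by Lemma~\ref{lem:zm}. Moreover $z_{m} \notin R$, because by~\eqref{eq:family} $z_{m} - p$ is a small nonzero real number ($0 < \theta_{m} < 1$ and the factor $\bar{\pi}\varepsilon^{m-1}\theta_m$ has absolute value less than $1/2$), so $z_m$ cannot be an integer of $R$ and in fact lies off $R$. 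More simply, an element of $R$ has valuation $\geq 0$ but would require $z_m - p$ to lie in $p + R$ shifted by an element of valuation $-2$; the cleanest route is to note that $\ve(z_m)=0$ and $z_m$ is close to $p$ with conjugate close to $\bar p$, so $z_m - r$ for $r \in R$ cannot vanish since $|p - r|,|\bar p - \bar r|$ cannot both be tiny by Lemma~\ref{lem:pclass}.

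Next suppose that some normalized remainder $w$ of $z_{m}$ satisfies $\phi_{t}(w) < 1$. Since $\ve(w) \geq 0$ and $t/2 > 1$, we have $|N(w)| \leq \phi_t(w) < 1$. By Lemma~\ref{lem:balance}(a),(c), with $P = 2m$ (Lemma~\ref{lem:closing}(b)), some $\varepsilon^{g}w$ is a balanced normalized remainder of a class $\varepsilon^{k}z_{m} + R$ with $0 \leq k < 2m$, with the same norm and valuation. By Lemma~\ref{lem:positions}(a), $k$ is one of the eight listed positions. By Lemma~\ref{lem:reversal}, after replacing $q$ by $8 - \bar{q}$, we may assume $k$ is one of the four positions~\eqref{eq:fourpositions}. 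Then, for $m$ beyond the threshold of Lemma~\ref{lem:positions}(b), the element is $y_{m,k} - q$ with $q$ in the box~\eqref{eq:box}, which by Proposition~\ref{prop:limits} has $17$ elements. So only $68$ pairs $(k,q)$ need to be considered, and this list is independent of $m$.

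For each such pair, $|N(y_{m,k} - q)| \to \lambda_{\gamma,d,q}$. If $\lambda > 1$, the norm exceeds $1$ for large $m$, and then $\phi_{t} \geq |N| > 1$. Otherwise $(k,q)$ is one of the six rows of the table. For the five rows with $\gamma = \delta$ the valuation is exactly $h$, and for the $\bar{\pi}$ row it is at least $h$. Since $t/2 > T/2 > 1$, we get $\phi_{t}(y_{m,k} - q) \geq |N(y_{m,k}-q)|(t/2)^{h}$, which tends to $\lambda (t/2)^{h}$. Because $(T/2)^{h} = \nu^{-h/4}$, inequality~\eqref{eq:limitineq} gives $\lambda(T/2)^{h} \geq 1$. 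Since $h \geq 1$ in every row, $\lambda(t/2)^{h} > \lambda(T/2)^h \geq 1$ strictly, so the weighted norm is above $1$ for all large $m$. Taking $m_{0}$ as the maximum of finitely many thresholds proves the claim.

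The main obstacle is the uniformity in the last step. The limits must hold simultaneously for all $68$ pairs with a single $m_0$, and the $\bar\pi$ row has only a lower bound on the valuation. That bound is harmless because larger valuation only increases $\phi_t$ when $t>2$. The equality row $d=0$ is where $t > T$ is genuinely needed, since at $t = T$ the limit is exactly $1$.
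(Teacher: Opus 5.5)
Apart from one step, your argument is the paper's proof. The reduction through Lemma~\ref{lem:balance}, Lemma~\ref{lem:positions}(a) and Lemma~\ref{lem:reversal} to finitely many pairs $(k,q)$ is the same. So is the comparison $\lambda(t/2)^{h} > \lambda(T/2)^{h} = \lambda\nu^{-h/4} \geq 1$, including the use of $h \geq 10$ in the $\bar{\pi}$ row.

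The step that differs is $z_{m} \notin R$. The paper takes $r_{m} \in \varepsilon z_{m} + R$ from~\eqref{eq:rm} and uses $|N(r_{m})| \to (9 - 2\sqrt{14})/4 \in (0,1)$. So for large $m$, $r_{m}$ is a nonzero element of absolute norm less than $1$ and cannot lie in $R$.

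Your final (``cleanest'') route is a valid alternative, but only once the conjugate estimate is written out. Conjugating~\eqref{eq:family} gives $\bar{z}_{m} - \bar{p} = \pi\varepsilon^{1 - m}(n - \theta_{m})$, with $\theta_{m} = (\varepsilon^{m} - n)/(\varepsilon^{2m} - 1) \in (0,1)$. If $z_{m} \in R$, then $p - z_{m}$ would be an element of $p + R$ with $|N(p - z_{m})| < (\bar{\pi}/\varepsilon)\cdot n\pi\varepsilon^{1 - m} = 2n\varepsilon^{-m} < 5/4$. This contradicts Lemma~\ref{lem:pclass}. Your route gives $z_{m} \notin R$ for every $m$ in~\eqref{eq:progression} without any limit. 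The paper's route instead reuses the limit already computed in Proposition~\ref{prop:limits} and needs $m$ large, which the statement allows.

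Drop your first two justifications. Closeness of $z_{m}$ to $p$ in one real embedding proves nothing, since $R$ is dense in $\R$. No valuation argument can help either, because $\ve(z_{m} - p) = -2$ whether or not $z_{m} \in R$.

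Two small points of order:
\begin{itemize}
\item State that $w \neq 0$ (because $z_{m} \notin R$) before invoking Lemma~\ref{lem:balance}(a).
\item Apply Lemma~\ref{lem:positions}(b) at the original position before reflecting, and then transport the box by $q \mapsto 8 - \bar{q}$, as the paper does. The reflected element $-\overline{\varepsilon^{g}w}$ is not balanced when $|\varepsilon^{g}w/\overline{\varepsilon^{g}w}| = \varepsilon^{-1}$. The conclusion is unaffected, since the box only uses $|w|,|\bar{w}| < \sqrt{\varepsilon}$.
\end{itemize}
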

\begin{proof}
  Let $t > T$. By Lemma~\ref{lem:zm}, $\ve(z_{m}) = 0$ for every $m$
  in~\eqref{eq:progression}, so the class $z_{m} + R$ has nonnegative
  valuation, and so does every class $\varepsilon^{k}z_{m} + R$, because
  $\ve(\varepsilon) = 0$. We first show that $z_{m} \notin R$ for all
  sufficiently large $m$. By Lemma~\ref{lem:closing}(c), the normalized
  remainder $r_{m}$ lies in $\varepsilon z_{m} + R$, and by the proof of
  Proposition~\ref{prop:limits}, $|N(r_{m})|$ tends to
  $\lambda = (9 - 2\sqrt{14})/4$, which satisfies $0 < \lambda < 1$. So
  $0 < |N(r_{m})| < 1$ for all sufficiently large $m$. A nonzero element
  of $R$ has absolute norm at least $1$, so $r_{m} \notin R$, and
  therefore $\varepsilon z_{m} \notin R$ and $z_{m} \notin R$. Thus
  $z_{m} \in \mathcal{Z}$ for all sufficiently large $m$.

  We next show that there is an $m_{0}$ such that, for every
  $m \geq m_{0}$ in~\eqref{eq:progression}, every position $k$
  in~\eqref{eq:fourpositions}, and every quotient $q$ in the
  box~\eqref{eq:box} at that position,
  \begin{equation} \label{eq:allabove}
    \phi_{t}(y_{m,k} - q) > 1.
  \end{equation}
  Since there are only finitely many pairs $(k,q)$, it will suffice to
  prove this for each pair separately. Note that every element of the
  class $\varepsilon^{k}z_{m} + R$ has nonnegative valuation, because the
  class has nonnegative valuation, and $t > 2$, so
  $\phi_{t}(y_{m,k} - q) \geq |N(y_{m,k} - q)|$.

  For each pair, $|N(y_{m,k} - q)|$ tends to
  $\lambda = \lambda_{\gamma,d,q}$ as $j \to \infty$
  in~\eqref{eq:progression}. If $\lambda > 1$, then $|N(y_{m,k} - q)| > 1$
  for all sufficiently large $m$, and so
  $\phi_{t}(y_{m,k} - q) \geq |N(y_{m,k} - q)| > 1$. By
  Proposition~\ref{prop:limits}, every other pair is one of the six listed
  there. For a pair with $\gamma = \delta$, the valuation of $y_{m,k} - q$
  is the integer $h \geq 1$ of the proposition for every $m$
  in~\eqref{eq:progression}, so
  \[
    \phi_{t}(y_{m,k} - q) = |N(y_{m,k} - q)|\left(\frac{t}{2}\right)^{h}
    \longrightarrow \lambda\left(\frac{t}{2}\right)^{h}
    > \lambda\left(\frac{T}{2}\right)^{h} = \lambda\nu^{-h/4} \geq 1,
  \]
  because $t > T$, $h \geq 1$, and $\lambda^{4} \geq \nu^{h}$. For the
  pair with $\gamma = \bar{\pi}$, the normalized remainder is $r_{m}$,
  whose valuation $h_{m}$ may depend on $m$, but $h_{m} \geq 10$ and
  $t/2 > 1$, so
  \[
    \phi_{t}(r_{m}) \geq |N(r_{m})|\left(\frac{t}{2}\right)^{10}
    \longrightarrow \lambda\left(\frac{t}{2}\right)^{10}
    > \lambda\nu^{-10/4} \geq 1,
  \]
  because $\lambda^{4} \geq \nu^{10}$. In each case
  $\phi_{t}(y_{m,k} - q) > 1$ for all sufficiently large $m$,
  and~\eqref{eq:allabove} follows.

  Finally, let $m \geq m_{0}$ in~\eqref{eq:progression} be so large that
  $z_{m} \notin R$ and that the conclusion of Lemma~\ref{lem:positions}(b)
  holds. Suppose, by way of contradiction, that a normalized remainder $w$
  of $z_{m}$ is reducing at weight $t$. Since $\ve(w) \geq 0$ and $t > 2$,
  we have $|N(w)| \leq \phi_{t}(w) < 1$, and $w \neq 0$ because
  $z_{m} \notin R$. By Lemma~\ref{lem:balance}(a) and~(c) with $P = 2m$,
  there is an integer $g$ such that $\varepsilon^{g}w$ is a balanced
  normalized remainder of a class $\varepsilon^{k}z_{m} + R$ with
  $0 \leq k < 2m$. It has the same norm and valuation as $w$, so
  $\phi_{t}(\varepsilon^{g}w) = \phi_{t}(w) < 1$ and
  $|N(\varepsilon^{g}w)| < 1$. By Lemma~\ref{lem:positions}(a), $k$ is one
  of the eight positions listed there, and by
  Lemma~\ref{lem:positions}(b), $\varepsilon^{g}w = y_{m,k} - q$ with $q$
  in the box~\eqref{eq:box} at the position $k$. If $k$ is not one of the
  four positions~\eqref{eq:fourpositions}, then $3 - k$ is, and by
  Lemma~\ref{lem:reversal} and the remark after Lemma~\ref{lem:positions},
  the normalized remainder $y_{m,3 - k} - (8 - \bar{q})$, with
  $8 - \bar{q}$ in the box at the position $3 - k$, has the same weighted
  norm as $\varepsilon^{g}w$. In either case we obtain a normalized
  remainder at one of the four positions, with a quotient in its box,
  whose weighted norm at $t$ is less than $1$. This
  contradicts~\eqref{eq:allabove}. Hence no normalized remainder of
  $z_{m}$ is reducing at weight $t$. This proves the proposition.
\end{proof}

\begin{remark} \label{rem:familywalk}
  The classes $z_{m} + R$ were constructed using periodic sequences of
  labels modeled on walks in the final graph. We now examine when their
  orbits follow the prescribed labels. Let $t = 9/4$, which exceeds $T$
  by~\eqref{eq:Tbounds}, and let $m \geq m_{0}$ be
  in~\eqref{eq:progression}, with $m_{0}$ as in
  Proposition~\ref{prop:upper} for this $t$. By Lemma~\ref{lem:zm} and
  Lemma~\ref{lem:closing}(b), the element $y_{m,k}$ has valuation at least
  $5$ and lies in the class $\varepsilon^{k}z_{m} + R$, so every
  normalized remainder of $y_{m,k}$ is $\varepsilon^{k}$ times a
  normalized remainder of $z_{m}$ and has the same weighted norm, because
  $N(\varepsilon) = 1$ and $\ve(\varepsilon) = 0$. Hence no normalized
  remainder of $y_{m,k}$ is reducing at weight $9/4$, and by
  Lemma~\ref{lem:X}(d) the point of $y_{m,k}$ in $\R^{2}/\Lambda$ lies in
  $\mathcal{X}$. These points form the orbit of the point of $z_{m}$ in
  $\R^{2}/\Lambda$, which is periodic by Lemma~\ref{lem:closing}(b), so by
  Lemma~\ref{lem:walks} its itinerary at the resolution $2^{10}$ runs
  along a closed walk in the final graph. If every $y_{m,k}$ lies in $D$,
  then $y_{m,k}$ is the representative in $D$ of its point, and
  by~\eqref{eq:ymkrecursion} the labels of this closed walk are the labels
  $\ell_{m,k}$ of~\eqref{eq:periodiclabels}. We verified by computer, for
  $m = 33$ and $m = 97$, that every $y_{m,k}$ lies in $D$, that the
  itinerary runs along a closed walk in the upper component with the
  labels $\ell_{m,k}$, and that it passes through a critical square at
  $k = m + 1$. The proof of Theorem~\ref{thm:main} does not use this
  remark.
\end{remark}

\section{Proofs of the main results} \label{sec:division}
In this section we combine Propositions~\ref{prop:division}
and~\ref{prop:upper} to prove Theorem~\ref{thm:main}. We then show that
the bound $4$ on the valuations in Theorem~\ref{thm:main} cannot be
lowered, we prove Corollary~\ref{cor:integer}, and we show that the
exponent $6$ in Corollary~\ref{cor:integer} is the least possible.
Finally, we show that the Euclidean minimum of $R$ with respect to
$\phi_{T}$ is $1$.

\begin{proof}[Proof of Theorem~\ref{thm:main}]
  By Proposition~\ref{prop:shape}(a), no weight $t$ with
  $0 < t \leq \sqrt{5}$ is in the window. By~\eqref{eq:Tbounds},
  $T > \sqrt{5}$, and by Proposition~\ref{prop:division}, every
  $z \in \mathcal{Z}$ has a normalized remainder that is reducing at
  weight $T$. Hence $\phi_{T}$ is a Euclidean function by
  Proposition~\ref{prop:shape}(b), and
  $(\sqrt{5},T] \subseteq w(\mathfrak{p})$ by
  Proposition~\ref{prop:shape}(d). Now let $t > T$. By
  Proposition~\ref{prop:upper}, some $z_{m} \in \mathcal{Z}$ has no
  normalized remainder that is reducing at weight $t$, and since
  $t > \sqrt{5}$, Proposition~\ref{prop:shape}(b) shows that $\phi_{t}$ is
  not a Euclidean function. Hence $w(\mathfrak{p}) = (\sqrt{5},T]$. That
  is, $\phi_{t}$ is a Euclidean function on $R$ if and only if
  $\sqrt{5} < t \leq T$. The closed form of $T$ was obtained in
  \S\ref{sec:Tvalue}, and the last assertion of the theorem is
  Proposition~\ref{prop:division}. This completes the proof of
  Theorem~\ref{thm:main}.
\end{proof}

\begin{proposition} \label{prop:valuationfour}
  For all sufficiently large $m$ in~\eqref{eq:progression}, the class
  $z_{m} + R$ has a normalized remainder that is reducing at weight $T$,
  and every such normalized remainder has valuation $4$. Consequently, the
  number $4$ in the last assertion of Theorem~\ref{thm:main} cannot be
  replaced by $3$.
\end{proposition}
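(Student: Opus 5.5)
The proof has two parts. Existence comes from Theorem~\ref{thm:main}, which we prove below. The valuation claim comes from rerunning the argument of Proposition~\ref{prop:upper} at the weight $t = T$ itself.

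For existence, Proposition~\ref{prop:upper} gives $z_{m} \in \mathcal{Z}$ for all sufficiently large $m$ in~\eqref{eq:progression}. Proposition~\ref{prop:division} then supplies a normalized remainder $w$ of $z_{m}$ with $\ve(w) \leq 4$ and $\phi_{T}(w) < 1$.

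For the valuation claim, let $w$ be any normalized remainder of $z_{m}$ that is reducing at weight $T$. We first reduce to a finite list, exactly as in the last paragraph of the proof of Proposition~\ref{prop:upper}.
\begin{enumerate}
  \item[(i)] Since $\ve(w) \geq 0$ and $T > 2$, we have $|N(w)| < 1$, and $w \neq 0$.
  \item[(ii)] By Lemma~\ref{lem:balance}, some $\varepsilon^{g}w$ is balanced. It has the same norm and valuation as $w$, and it lies in a class $\varepsilon^{k}z_{m} + R$ with $0 \leq k < 2m$.
  \item[(iii)] By Lemma~\ref{lem:positions}, $k$ is one of the eight listed positions, and $\varepsilon^{g}w = y_{m,k} - q$ with $q$ in the box~\eqref{eq:box}.
  \item[(iv)] By Lemma~\ref{lem:reversal}, we may move to one of the four positions~\eqref{eq:fourpositions}. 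This does not change the norm or the valuation.
\end{enumerate}

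Next we discard every pair $(k,q)$ whose limit is not the critical corner, by comparing with the table of Proposition~\ref{prop:limits}.
\begin{itemize}
  \item If $\lambda_{\gamma,d,q} > 1$, then $|N| > 1$ for large $m$, so the pair is not reducing.
  \item For the four rows with $\gamma = \delta$ and $d \neq 0$, we need the strict inequality $\lambda^{4} > \nu^{h}$. Proposition~\ref{prop:limits} gives strictness in every row except $d = 0$. Since $\phi_{T}(y_{m,k} - q) \to \lambda\nu^{-h/4} > 1$, these pairs fail for large $m$.
  \item For the row $\gamma = \bar{\pi}$, we have $h_{m} \geq 10$. The inequality $((9 - 2\sqrt{14})/4)^{4} > \nu^{10}$ is strict, so $\phi_{T}(r_{m}) \geq |N(r_{m})|\nu^{-10/4} \to \lambda\nu^{-10/4} > 1$.
\end{itemize}
Finitely many pairs are involved, so one bound on $m$ handles all of them.

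The only surviving pair is $k = m+1$, $q = 4 + 4\sqrt{14}$, together with its reflected partner. There $\varepsilon^{g}w = w_{m}$ or $-\bar{w}_{m}$. Lemma~\ref{lem:zm} gives $\ve(y_{m,k}) \geq 5$, so by~\eqref{eq:fixedval} the valuation is $\ve(q) = 4$. Hence $\ve(w) = 4$. This also shows that the reducing remainder from the existence part is one of these. In particular, no reducing remainder has valuation at most $3$, so $4$ cannot be replaced by $3$.

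The main obstacle is the boundary case $t = T$, where the limiting argument of Proposition~\ref{prop:upper} gives nothing in the equality row. That row is not excluded, so it must carry valuation exactly $4$. All the other rows are excluded because their inequalities are strict, and that strictness is supplied by the exact comparisons in Proposition~\ref{prop:limits}.
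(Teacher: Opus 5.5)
Your proposal is correct. The valuation half is exactly the paper's argument: you balance the remainder, restrict to the eight positions, reflect to the four positions~\eqref{eq:fourpositions}, and at $t = T$ exclude every pair except the equality row of Proposition~\ref{prop:limits}, using the strict inequalities $\lambda^{4} > \nu^{h}$ and, for $r_{m}$, the bound $\ve(r_{m}) \geq 10$.

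The only difference is the existence half. You combine $z_{m} \in \mathcal{Z}$ (from Proposition~\ref{prop:upper}) with Proposition~\ref{prop:division}. The paper instead exhibits the reducing remainder directly, as follows.
\begin{itemize}
  \item It writes $w_{m} = -U + a_{m}$ and $\bar{w}_{m} = V - \varepsilon a_{m}$, where $a_{m} = \bar{\pi}(\varepsilon^{m} - n)/(\varepsilon^{2m} - 1)$.
  \item It checks $0 < a_{m} < 1/10 < U$.
  \item It concludes $|N(w_{m})| = \varepsilon(U - a_{m})^{2} < \nu$, so that $\phi_{T}(w_{m}) = (1 - a_{m}/U)^{2} < 1$.
\end{itemize}

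Your route is shorter on the page and logically sound, since Proposition~\ref{prop:division} does not depend on this proposition. However, it draws on the whole finite calculation behind Proposition~\ref{prop:division}. The paper's computation is elementary and holds for every $m$ in~\eqref{eq:progression}. It also identifies the reducing remainder as $w_{m}$ from the outset. Finally, it yields the formula $\phi_{T}(w_{m}) = (1 - a_{m}/U)^{2} \to 1$, which is reused in Proposition~\ref{prop:minimumT}.
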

\begin{proof}
  We first show that $z_{m} + R$ has a normalized remainder that is
  reducing at weight $T$, for every $m$ in~\eqref{eq:progression}. Recall
  the normalized remainder $w_{m} = y_{m,m + 1} - q$ of the class
  $\varepsilon^{m + 1}z_{m} + R$ in~\eqref{eq:wm}, where
  $q = 4 + 4\sqrt{14}$. By Proposition~\ref{prop:limits} (the row with
  $\gamma = \delta$ and $d = 0$), it has valuation $4$. To bound its norm,
  let
  \[
    a_{m} = \frac{\bar{\pi}(\varepsilon^{m} - n)}{\varepsilon^{2m} - 1}.
  \]
  Then~\eqref{eq:wm}, $\pi = \varepsilon\bar{\pi}$, and
  $V = \varepsilon U$ give
  \[
    w_{m} = -U + a_{m},\qquad \bar{w}_{m} = V - \varepsilon a_{m}.
  \]
  We have $29 < \varepsilon < 31$, $0 < \bar{\pi} < 1$, and
  $n = 104051 < 29^{4}$. Since $m \geq 33$, it follows that
  \[
    0 < a_{m}
    < \frac{\varepsilon^{-m}}{1 - \varepsilon^{-2m}}
    < 2\varepsilon^{-m} < \frac{1}{10}.
  \]
  Also, by~\eqref{eq:nu} and~\eqref{eq:nubounds},
  \[
    U^{2} = \frac{\nu}{\varepsilon}
    > \frac{627}{31000} > \frac{1}{100}.
  \]
  Since $U > 0$, we have $0 < a_{m} < U$. Consequently
  \[
    |N(w_{m})| = (U - a_{m})(V - \varepsilon a_{m})
    = \varepsilon(U - a_{m})^{2} < \varepsilon U^{2} = \nu,
  \]
  and so $\phi_{T}(w_{m}) = |N(w_{m})|(T/2)^{4} = |N(w_{m})|/\nu < 1$.
  Multiplication by the unit $\varepsilon^{-m - 1}$ preserves norms and
  valuations, so $\varepsilon^{-m - 1}w_{m}$ is a normalized remainder of
  $z_{m}$ that is reducing at weight $T$.

  Next we apply the proof of Proposition~\ref{prop:upper} with $t = T$.
  For a pair with $\lambda_{\gamma,d,q} > 1$, the normalized remainder
  $y_{m,k} - q$ has absolute norm greater than $1$ for all sufficiently
  large $m$. For the rows of Proposition~\ref{prop:limits} other than the
  row with equality in~\eqref{eq:limitineq}, the inequality
  $\lambda^{4} > \nu^{h}$ is strict, so $\lambda(T/2)^{h} > 1$. The
  weighted norms of the corresponding normalized remainders therefore
  exceed $1$ for all sufficiently large $m$. For the normalized remainder
  $r_{m}$ in the row with $\gamma = \bar{\pi}$ we use
  $\ve(r_{m}) \geq 10$, exactly as in that proof. By
  Lemmas~\ref{lem:balance}, \ref{lem:positions}, and~\ref{lem:reversal},
  every reducing normalized remainder of $z_{m}$ is a unit multiple of one
  of the finitely many normalized remainders $y_{m,k} - q$ with $k$
  in~\eqref{eq:fourpositions} and $q$ in the box~\eqref{eq:box}, or of the
  conjugate of one of them. Thus, for all sufficiently large $m$
  in~\eqref{eq:progression}, every normalized remainder of $z_{m}$ that is
  reducing at weight $T$ is a unit multiple of $w_{m}$ or of
  $\bar{w}_{m}$, and both have valuation $4$. This proves the proposition.
\end{proof}

\begin{proof}[Proof of Corollary~\ref{cor:integer}]
  Let $t_{0} = 2^{7/6}$. Since
  $t_{0}^{6} = 2^{7} = 128 > 125 = (\sqrt{5})^{6}$, we have
  $\sqrt{5} < t_{0}$. Exact comparison in $K$ gives $\nu^{3} < 1/4$. Since
  $T^{4} = 16/\nu$, it follows that
  \[
    T^{12} = \frac{4096}{\nu^{3}} > 16384 = 2^{14} = t_{0}^{12},
  \]
  so $t_{0} < T$. Hence $\phi_{t_{0}}$ is a Euclidean function by
  Theorem~\ref{thm:main}. Furthermore,
  \[
    \phi_{t_{0}}(x)^{6}
    = |N(x)|^{6}\left(\frac{t_{0}}{2}\right)^{6\ve(x)}
    = 2^{\ve(x)}|N(x)|^{6} = \Phi(x)\qquad(x \neq 0).
  \]
  Taking sixth powers preserves every strict inequality between
  nonnegative numbers, so $\Phi$ satisfies condition (iii) with
  $\kappa = 1$. The values of $\Phi$ on $R$ are nonnegative integers, with
  $\Phi(x) = 0$ only for $x = 0$, so conditions (i) and (ii) hold.
  Complete multiplicativity follows from that of the norm and the
  additivity of the valuation. This proves the corollary.
\end{proof}

\begin{remark} \label{rem:leastexponent}
  Among the functions with value $0$ at $0$ and value
  $2^{j\ve(x)}|N(x)|^{k}$ at $x \neq 0$, where $j$ and $k$ are positive
  integers, $6$ is the least exponent $k$ for which the function is a
  Euclidean function on $R$. These functions satisfy conditions (i) and
  (ii) of \S\ref{sec:classes}, because they take nonnegative integer
  values and vanish only at $0$. Let $t = 2^{1 + j/k}$. Then
  \[
    2^{j\ve(x)}|N(x)|^{k} = \phi_{t}(x)^{k}\qquad(x \neq 0).
  \]
  Taking a positive integral power preserves strict inequalities between
  nonnegative real numbers in both directions. Since $t > 1$,
  Lemma~\ref{lem:classes} shows that this function is Euclidean if and
  only if $t$ belongs to the window $w(\mathfrak{p})$.

  If $k \leq 5$, then $j/k \geq 1/5$. Also $2^{6/5} > 9/4$, because
  $2^{16} > 9^{5}$. By~\eqref{eq:Tbounds},
  \[
    2^{1 + j/k} \geq 2^{6/5} > \frac{9}{4} > T,
  \]
  so this weight is not in the window. Thus $k \geq 6$ is necessary, and
  Corollary~\ref{cor:integer} gives an example with $j = 1$ and $k = 6$.
\end{remark}

We now return to the Euclidean minimum of \S\ref{sec:classes}. The classes
$z_{m} + R$ of \S\ref{sec:family} show that it equals $1$ at the upper
endpoint of the window.

\begin{proposition} \label{prop:minimumT}
  The Euclidean minimum of $R$ with respect to $\phi_{T}$ is $1$. Every
  class contains a normalized remainder $w$ with $\phi_{T}(w) < 1$, but
  for every $\kappa < 1$ some class contains no normalized remainder $w$
  with $\phi_{T}(w) < \kappa$.
\end{proposition}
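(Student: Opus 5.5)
The proof has two halves. The upper bound $\kappa = 1$ follows at once from Proposition~\ref{prop:division} together with Lemma~\ref{lem:classes} and Proposition~\ref{prop:shape}(b). Every class $z + R$ contains a normalized remainder $w$ with $\phi_{T}(w) < 1$:
\begin{itemize}
  \item for the classes in $\mathcal{Z}$ this is Proposition~\ref{prop:division};
  \item for $R$ take $w = 0$;
  \item for $p + R$ take $w = p$, which has weighted norm $5/T^{2} < 1$;
  \item for the other classes of negative valuation take the remainder of Proposition~\ref{prop:ordinary}, whose weighted norm is at most its absolute norm, since $T > 2$.
\end{itemize}
So condition (iii) holds with $\kappa = 1$, and the Euclidean minimum is at most $1$.

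For the lower bound, fix $\kappa < 1$. I would show that for $m$ large in the progression~\eqref{eq:progression}, every normalized remainder $w$ of $z_{m}$ satisfies $\phi_{T}(w) \geq \kappa$.

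Suppose instead that $\phi_{T}(w) < \kappa < 1$. Then $|N(w)| \leq \phi_{T}(w) < 1$, because $\ve(w) \geq 0$ and $T > 2$. As in the proof of Proposition~\ref{prop:upper}, Lemmas~\ref{lem:balance}, \ref{lem:positions} and~\ref{lem:reversal} reduce $w$, up to a unit multiple and conjugation, to one of the finitely many remainders $y_{m,k} - q$ with $k$ among the positions~\eqref{eq:fourpositions} and $q$ in the box~\eqref{eq:box}. These operations do not change the weighted norm. Each such pair $(k,q)$ then falls under one of the following cases, using its limit $\lambda$ from Proposition~\ref{prop:limits}:
\begin{itemize}
  \item if $\lambda > 1$, the absolute norm, and hence $\phi_{T}$, eventually exceeds $1 > \kappa$;
  \item for the four rows with $\gamma = \delta$ and strict inequality in~\eqref{eq:limitineq}, $\phi_{T}$ tends to $\lambda\nu^{-h/4} > 1$;
  \item for $r_{m}$, the bound $\ve(r_{m}) \geq 10$ gives $\phi_{T}(r_{m}) \geq |N(r_{m})|\nu^{-10/4}$, which tends to a limit greater than $1$;
  \item for the equality row, the remainder is $w_{m}$ with valuation $4$, so $\phi_{T}(w_{m}) = |N(w_{m})|/\nu$.
\end{itemize}
Its reflected partner $-\bar{w}_{m}$ has the same weighted norm.

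The main step is the equality row. From the proof of Proposition~\ref{prop:valuationfour}, $|N(w_{m})| = \varepsilon(U - a_{m})^{2}$ with $0 < a_{m} < 2\varepsilon^{-m}$. Therefore
\[
  \phi_{T}(w_{m}) = \Bigl(1 - \frac{a_{m}}{U}\Bigr)^{2} \longrightarrow 1,
\]
and in particular $\phi_{T}(w_{m}) \geq \kappa$ for all large $m$. Since only finitely many pairs $(k,q)$ occur, one threshold on $m$ works for all of them. For such $m$ the class $z_{m} + R$ has no normalized remainder of weighted norm less than $\kappa$, which gives the lower bound.

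The only delicacy is to make sure the reduction is uniform in $m$, and Lemma~\ref{lem:positions}(b) already provides this.
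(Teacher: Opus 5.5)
Your proof is correct and follows the paper's argument. The upper bound comes from $\phi_T$ being Euclidean, and your case analysis is exactly Proposition~\ref{prop:shape}(b) combined with Proposition~\ref{prop:division}. The lower bound comes from the classes $z_m + R$, where the only normalized remainders below $1$ are unit multiples of $w_m$ or $\bar{w}_m$, and $\phi_T(w_m) = (1 - a_m/U)^2 \to 1$.

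The paper simply cites that conclusion from the proof of Proposition~\ref{prop:valuationfour} rather than rerunning the case analysis with $\kappa$. It also writes $z_m = a/b$ explicitly to show that condition (iii) fails for this $\kappa$, a step you leave implicit. Finally, you should note, as in Proposition~\ref{prop:upper}, that $z_m \notin R$ for large $m$, so that $w \neq 0$ and Lemma~\ref{lem:balance} applies.
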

\begin{proof}
  By Theorem~\ref{thm:main}, $\phi_{T}$ is a Euclidean function on $R$. So
  it satisfies condition (iii) with $\kappa = 1$, and by
  Lemma~\ref{lem:classes}(b) every class contains a normalized remainder
  $w$ with $\phi_{T}(w) < 1$.

  Now let $\kappa < 1$. In the proof of
  Proposition~\ref{prop:valuationfour} we showed that the normalized
  remainder $w_{m}$ of~\eqref{eq:wm} has valuation $4$ and that
  $|N(w_{m})| = \varepsilon(U - a_{m})^{2}$, where
  $a_{m} = \bar{\pi}(\varepsilon^{m} - n)/(\varepsilon^{2m} - 1)$
  satisfies $0 < a_{m} < 2\varepsilon^{-m}$. Since $(T/2)^{4} = 1/\nu$ and
  $\nu = \varepsilon U^{2}$, it follows that
  \[
    \phi_{T}(w_{m}) = \frac{|N(w_{m})|}{\nu}
    = \left(1 - \frac{a_{m}}{U}\right)^{2},
  \]
  and this number tends to $1$ as $m \to \infty$
  in~\eqref{eq:progression}. The proof of
  Proposition~\ref{prop:valuationfour} also shows that, for all
  sufficiently large $m$ in~\eqref{eq:progression}, every normalized
  remainder of $z_{m}$ that is reducing at weight $T$ is a unit multiple
  of $w_{m}$ or of $\bar{w}_{m}$, and
  $\phi_{T}(\bar{w}_{m}) = \phi_{T}(w_{m})$. Choose such an $m$ with
  $\phi_{T}(w_{m}) \geq \kappa$. Then every normalized remainder $w$ of
  $z_{m}$ satisfies $\phi_{T}(w) \geq \kappa$, because either $w$ is
  reducing at weight $T$ and $\phi_{T}(w) = \phi_{T}(w_{m})$, or
  $\phi_{T}(w) \geq 1 > \kappa$. Write $z_{m} = a/b$ with $a,b \in R$ and
  $b \neq 0$. For every $q \in R$,
  \[
    \phi_{T}(a - bq) = \phi_{T}(b)\,\phi_{T}(z_{m} - q)
    \geq \kappa\,\phi_{T}(b),
  \]
  so condition (iii) does not hold with this $\kappa$. Hence the Euclidean
  minimum of $R$ with respect to $\phi_{T}$ is $1$. This proves the
  proposition.
\end{proof}

\section{Open problems} \label{sec:questions}
We close with three questions suggested by the proof.

Cavallar and Lemmermeyer asked whether, for a number field with unit rank
at least $1$ and a prime ideal $\mathfrak{p}$ of its ring of integers, the
Euclidean minimum of the weighted norm that gives $\mathfrak{p}$ the
weight $c$ is a continuous function of $c$ on
$[N\mathfrak{p},\infty)$~\cite[Section~8,
p.~353]{CavallarLemmermeyer2000}. Here $N\mathfrak{p}$ is the norm of
$\mathfrak{p}$, and for the prime ideal $\mathfrak{p} = \pi R$ of this
paper the interval is $[2,\infty)$. For $t > 1$ the function $\phi_{t}$
satisfies conditions (i) and (ii) of \S\ref{sec:classes} by
Lemma~\ref{lem:classes}, so the Euclidean minimum of $R$ with respect to
$\phi_{t}$ is defined. By Proposition~\ref{prop:minimumT}, it is $1$ at
the upper endpoint $t = T$ of the window.

\begin{question} \label{q:minimum}
  What is the Euclidean minimum of $R$ with respect to $\phi_{t}$ for
  $\sqrt{5} < t < T$, and is it a continuous function of $t$?
\end{question}

The method of \S\S\ref{sec:graph}--\ref{sec:family} is not special to
$\Z[\sqrt{14}]$, although the details of the finite calculation are.

\begin{question} \label{q:otherfields}
  Can the method of this paper determine the Euclidean windows of the
  prime ideals in other real quadratic fields whose rings of integers are
  principal ideal domains but are not norm-Euclidean?
\end{question}

The number $\nu = UV$ is the absolute value of the product of the
coordinates of the translated point $\eta_{0} - q$, and the critical orbit
is a homoclinic orbit of $(\bar{p},p)$ modulo $R$, by \S\ref{sec:corner}.
Every homoclinic orbit of $(\bar{p},p)$ modulo $R$ has points represented
by $(\bar{p},p + \varepsilon^{k}\delta')$ with $0 \neq \delta' \in R$, and
the critical orbit contains the distinguished point $\eta_{0}$ with
displacement $\delta' = -104051\bar{\pi}$. In this paper, however, the
critical orbit is found through the finite calculation of
\S\ref{sec:weighted}, and we know no way to single out this displacement
among all elements of $R$ without such a calculation.

\begin{question} \label{q:homoclinic}
  Is there a characterization of the displacement $-104051\bar{\pi}$ of
  the homoclinic point $\eta_{0}$ of the critical orbit, and hence of
  $\nu$ and $T$, in terms of the homoclinic orbits of $(\bar{p},p)$ alone,
  without the finite calculation of \S\ref{sec:weighted}?
\end{question}
\section*{Acknowledgment of AI use}
In preparing this paper I used the AI tools ChatGPT (by OpenAI) and Claude
(by Anthropic). I used them to perform literature searches, to suggest
grammatical improvements, to check for typographical errors, to keep track
of the cross-references and labels in the \LaTeX{} source, to convert my
numerical algorithms into the Python programming language, to create the
graphical illustrations, to test many unsuccessful strategies to find the
exact Euclidean window of $\Z[\sqrt{14}]$, and to perform a very large
number of numerical experiments.

\end{document}